\newtheorem{thm}[equation]{Theorem}
\newtheorem{cor}[equation]{Corollary}
\newtheorem{lemma}[equation]{Lemma}
\newtheorem{prop}[equation]{Proposition}
\newtheorem{conv}[equation]{Conventions}
\newtheorem*{conj*}{Conjecture}
\theoremstyle{definition}
\newtheorem{remark}[equation]{Remark}
\newtheorem{exam}[equation]{Example}
\numberwithin{equation}{section}
\newcommand{\fa} {\mathfrak A}
\newcommand{\zfa}{\mathsf{Z}(\fa)}
\newcommand{\FF}{\mathbb{F}}  
\newcommand{\ZZ}{\mathbb{Z}}
\newcommand{\A}{\mathsf{A}} 
\newcommand{\BBg}{\mathsf{B}_g}  
\newcommand{\BBh}{\mathsf{B}_h}  
\newcommand{\DD}{\mathsf{R}}
\newcommand{\G}{\mathbb{G}}
\newcommand{\PP}{{\mathbb P}}
\newcommand{\pr}{{\mathsf u}}
\newcommand{\PR}{\mathsf{P}}
\newcommand{\ve}{\varepsilon}
\newcommand{\vv}{\mathsf{q}}
\newcommand\chara{\mathsf {char}}
\def\inder{\mathsf {Inder_\FF}}
\def\der{\mathsf {Der_\FF}}
\def\hoch{\mathsf{HH^1}} 
\def\ad{\mathsf {ad}}
\def\aut{\mathsf {Aut_\FF}}
\def\im{\mathsf{im}}
\def\degg{\mathsf{deg\,}}
\def\cent1{\mathsf{Z}(\A_1)}
\def\centh{\mathsf{Z}(\A_h)}
\def\half{\frac{1}{2}}
\newcommand{\modd}{\mathsf{\, mod\,}}  
\begin{document}
\title{A Parametric Family of Subalgebras of the Weyl Algebra \\ 
I. Structure and Automorphisms}
\author{Georgia Benkart, Samuel A.\ Lopes\thanks{Research funded by the European Regional Development Fund through the programme COMPETE and by the Portuguese Government through the FCT -- Funda\c c\~ao para a Ci\^encia e a Tecnologia under the project PEst-C/MAT/UI0144/2011.}, and Matthew Ondrus}
\date{}
\maketitle  
\vspace{-.25 truein}  
\begin{abstract}{An Ore extension over a polynomial algebra $\FF[x]$ is either a quantum plane, a quantum Weyl algebra, or an infinite-dimensional unital associative algebra $\A_h$ generated by elements $x,y$,  which satisfy $yx-xy = h$, where $h\in \FF[x]$. We investigate the family of algebras $\A_h$ as $h$ ranges over all the polynomials in $\FF[x]$.  When $h \neq 0$,  the algebras $\A_h$ are subalgebras of the Weyl algebra $\A_1$ and can be viewed as differential operators with polynomial coefficients.    We give an exact description of the automorphisms of $\A_h$ over arbitrary fields $\FF$ and describe the invariants in $\A_h$ under the automorphisms.   We determine the  center,  normal elements,  and height one prime ideals of $\A_h$, localizations and Ore sets for $\A_h$, and the Lie ideal  $[\A_h,\A_h]$.  We also  show that $\A_h$ cannot be realized as a generalized Weyl algebra over $\FF[x]$,  except when $h \in \FF$.   In two sequels to this work, we completely describe the irreducible modules and derivations of $\A_h$ over any field.} \end{abstract}

\begin{section}{Introduction} \end{section}
The focus of this paper is on a family of infinite-dimensional unital associative algebras $\A_h$ parametrized by a polynomial $h = h(x) \in \FF[x]$, where $\FF$ is an arbitrary field.   The algebra $\A_h$ has generators $x,y$,  which satisfy the defining relation $yx = xy + h$,  or equivalently,  $[y,x] = h$, where $[y,x] = yx-xy$.   The Ore extensions whose underlying ring is $\FF[x]$ fall into three specific types.  They are quantum planes, quantum Weyl algebras, or one of the algebras $\A_h$ (compare Lemma \ref{lem:poly} below).  Quantum planes and quantum Weyl algebras are examples of generalized Weyl algebras, and as such, have been studied extensively.   It is the aim of our work to investigate the family of algebras $\A_h$ as $h$ ranges over all the polynomials in $\FF[x]$.   The algebras $\A_h$  are left and right Noetherian domains.  As modules over $\FF[x]$,  they are free with basis $\{y^n \mid n \in \ZZ_{\geq 0}\}$.   Each algebra $\A_h$ with $h \neq 0$ can be viewed as a subalgebra of the Weyl algebra $\A_1$ 
and thus has a representation as differential operators on $\FF[x]$, where $x$ acts by multiplication and $y$
by $h \frac{d}{dx}$,  so that $[h \frac{d}{dx}, x] = h$ holds. 

There are several widely-studied examples of algebras in this family.   The algebra $\A_0$
is the polynomial algebra $\FF[x,y]$;   $\A_1$ is the Weyl algebra; and  $\A_x$ is the universal
enveloping algebra of the two-dimensional non-abelian Lie algebra (there is only one such
Lie algebra up to isomorphism).  The algebra $\A_{x^2}$ is often referred to as the Jordan
plane.  It arises in noncommutative algebraic geometry (see for example, \cite{SZ94} and \cite{AS95})  and
exhibits many interesting features such as being Artin-Schelter regular of dimension 2. 
In a series of articles \cite{shirikov05}--\cite{shirikov07-2},  Shirikov
has undertaken an extensive study of the automorphisms, derivations, prime ideals, and modules
of the algebra $\A_{x^2}$.  These investigations have been extended by  Iyudu  in 
recent work  \cite{I12}  to include results on varieties of finite-dimensional modules of
$\A_{x^2}$ over algebraically closed fields of characteristic zero.  Cibils, Lauve, and Witherspoon \cite{CibLauWit09}  have used quotients of the algebra $\A_{x^2}$ 
and cyclic subgroups of their automorphism groups to
construct new examples of finite-dimensional Hopf algebras in prime characteristic
which are Nichols algebras.

There are striking similarities in the behavior of the algebras $\A_h$ as $h$ ranges over the polynomials
in $\FF[x]$.  For that reason, we believe that studying them as one family provides much insight into their
structure, derivations, automorphisms, and modules.    
In this paper, we determine the following:
\medskip 

\begin{itemize}
\item embeddings of $\A_g$ into $\A_f$ \quad  (Section 3)
\item localizations and Ore sets for $\A_h$  \quad  (Section 4)
\item the center of $\A_h$ \quad (Section 5) 
\item the Lie ideal  $[\A_h,\A_h]$ of $\A_h$   \quad  (Section 6)
\item the normal elements and the prime ideals of $\A_h$  \ \  (Section 7)
\item the automorphism group $\fa = \aut(\A_h)$ and its center, and the subalgebra $\A_h^{\fa}$ of $\fa$-invariants in $\A_h$  \quad  (Section 8)
\item the relationship of $\A_h$ to generalized Weyl algebras \quad  (Section 9).
\end{itemize} 
 In the sequel  \cite{BLO2}, we determine the irreducible modules and the primitive ideals of $\A_h$  in arbitrary characteristic and construct indecomposable $\A_h$-modules of arbitrarily large dimension. 
In further work \cite{BLO3}, 
we completely describe  the Lie algebra $\der(\A_h)$ of $\FF$-linear derivations and the first Hochschild cohomology
$\hoch(\A_h) = \der(\A_h)/\inder(\A_h)$ of $\A_h$ over arbitrary fields $\FF$.   Our investigations extend
earlier results of Nowicki \cite{Now04}.  In particular, 
we determine  the Lie bracket in $\hoch (\A_h) := \der (\A_h) / \inder (\A_h)$,  construct a maximal nilpotent ideal of $\hoch (\A_h)$,  and explicitly describe the structure of the corresponding quotient in terms of the Witt algebra  (centreless Virasoro algebra) of vector fields on the unit circle when $\chara(\FF) = 0$.
 
\begin{section}{Ore Extensions} \end{section}
\begin{subsection}{Generalities}\end{subsection}
An Ore extension $\A = \DD[y,\sigma, \delta]$  is built from a unital associative (not necessarily commutative) algebra $\DD$ over a field $\FF$, an $\FF$-algebra endomorphism $\sigma$ of $\DD$, and a $\sigma$-derivation of $\DD$, where by a  $\sigma$-derivation $\delta$,  we mean that  $\delta$ is $\FF$-linear and $\delta (rs) = \delta(r)s + \sigma (r) \delta (s)$ holds for all $r,s \in \DD$.  Then  $\A = \DD[y, \sigma, \delta]$ is the algebra generated by $y$ over $\DD$ subject to the relation 
$$yr= \sigma(r)y + \delta(r) \qquad \hbox{\rm for all} \ r \in \DD.$$     
The endomorphisms $\sigma$ considered in this paper will be automorphisms of $\DD$.  
The following are standard facts about Ore extensions. 
\medskip

\begin{thm}\label{thm:basicFactsOre}
Let $\A = \DD[y,\sigma, \delta]$ be an Ore extension over a unital associative algebra $\DD$ over a field $\FF$ such that $\sigma$ is an automorphism.
\begin{enumerate}
\item[{\rm (1)}]  $\A$ is a free left and right $\DD$-module with basis $\{ y^n  \mid n \ge 0 \}$.
\item[{\rm (2)}]  If $\DD$ is left (resp.~right) Noetherian, then $\A$ is left (resp.~right) Noetherian.
\item[{\rm (3)}]  If $\DD$ is a domain, then $\A$ is a domain.
\item[{\rm (4)}] The units of $\A$ are the units of $\DD$.
\end{enumerate}
\end{thm}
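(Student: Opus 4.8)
The plan is to establish all four statements by exploiting the normal form guaranteed by the division algorithm inherent in the defining relation $yr = \sigma(r)y + \delta(r)$. The starting point is (1), which is the foundational fact: since every element of $\A$ can be written as a $\DD$-linear combination of monomials $y^n$ by repeatedly pushing $y$'s to the right of $\DD$-elements, the set $\{y^n \mid n \ge 0\}$ spans $\A$ as a left $\DD$-module; the same argument with $ry = y\sigma^{-1}(r) - \sigma^{-1}(\delta(\sigma^{-1}(r)))$ (using that $\sigma$ is invertible) shows it spans as a right $\DD$-module. For linear independence, I would invoke the standard construction of $\A$ as the free left $\DD$-module $\bigoplus_{n\ge0}\DD y^n$ equipped with the multiplication forced by the Ore relation, and check associativity (or simply cite that this is the universal realization); the point is that $\A$ maps onto this module-with-multiplication, and the generators-and-relations presentation shows the map is an isomorphism, so the $y^n$ are a basis. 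This is the main technical obstacle, since verifying associativity of the explicitly-defined product is the one genuinely computational step, though it is routine.

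Given (1), statement (2) follows from the filtration argument: let $\A_{\le k} = \bigoplus_{n\le k}\DD y^n$, a filtration of $\A$ by $(\DD,\DD)$-bimodules with $\A_{\le k}/\A_{\le k-1}\cong \DD$ as a left $\DD$-module. If $I$ is a left ideal of $\A$, then $\bigcup_k \big((I\cap\A_{\le k})+\A_{\le k-1}\big)/\A_{\le k-1}$ gives an ascending chain of left $\DD$-submodules of $\bigoplus \DD$; using left Noetherianity of $\DD$ one shows that finitely many leading coefficients suffice to generate, and a standard "leading term" induction on degree in $y$ shows $I$ is finitely generated. (Alternatively, associate the graded ring $\mathrm{gr}\,\A\cong\DD[y;\sigma]$ or a skew polynomial ring over $\DD$, which is left Noetherian by a Hilbert basis-type theorem, and lift.) The right-handed version is symmetric, again using that $\sigma$ is an automorphism so that $\A$ is also free on the right.

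For (3), suppose $a,b\in\A$ are nonzero; write $a = \sum_{i\le m}r_i y^i$ with $r_m\ne0$ and $b=\sum_{j\le n}s_j y^j$ with $s_n\ne0$. Computing the product $ab$ and collecting the coefficient of $y^{m+n}$, one finds it equals $r_m\sigma^m(s_n)$ — the key observation being that $y^m s_n = \sigma^m(s_n)y^m + (\text{lower order in }y)$. Since $\sigma$ is an automorphism of the domain $\DD$, $\sigma^m(s_n)\ne0$, hence $r_m\sigma^m(s_n)\ne0$ because $\DD$ is a domain, so $ab\ne0$. Thus $\A$ is a domain. Finally, for (4): if $ab=1$ with $a,b\in\A$, then comparing $y$-degrees in the domain $\A$ forces $\degg_y(a)+\degg_y(b)=0$, so both $a$ and $b$ lie in $\DD$; conversely a unit of $\DD$ is trivially a unit of $\A$. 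Hence the units of $\A$ are exactly those of $\DD$. Throughout, the only place where the automorphism hypothesis on $\sigma$ is essential (beyond convenience) is in the right-module statements of (1) and (2); for (3) and (4) one needs $\sigma$ injective so that $\sigma^m(s_n)\ne 0$, which is automatic here.
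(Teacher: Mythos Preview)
The paper does not supply a proof of this theorem; it is presented as a collection of ``standard facts about Ore extensions,'' with the implicit reference being a text such as Goodearl--Warfield or McConnell--Robson (both in the bibliography). Your sketch is the standard argument and is correct for parts (1)--(3).

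For (4), however, there is a gap worth naming. Your argument invokes the degree function on $\A$ and the additivity $\deg_y(ab) = \deg_y(a) + \deg_y(b)$, which you established in the course of (3) \emph{under the hypothesis that $\DD$ is a domain}. But (4) as stated carries no such hypothesis, and the conclusion is in fact false without it: take $\DD = \FF[\epsilon]/(\epsilon^2)$ and the ordinary polynomial ring $\A = \DD[y]$ (so $\sigma = \mathrm{id}$, $\delta = 0$); then $(1+\epsilon y)(1-\epsilon y) = 1$, so $1+\epsilon y$ is a unit of $\A$ not lying in $\DD$. Thus either (4) is meant to inherit the domain assumption from (3)---which is consistent with how the paper actually uses the result, since $\DD = \FF[x]$ throughout---or the statement is imprecise as written. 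Your proof is correct once that assumption is made explicit; you should flag the dependence rather than leave it implicit.
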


\begin{subsection}{Ore Extensions with Polynomial Coefficients} \end{subsection}

We are concerned with Ore extensions $\A = \DD[y,\sigma,\delta]$  with  $\DD = \FF[x]$, a polynomial algebra in the indeterminate $x$,  and $\sigma$ an automorphism of $\DD$.  In this case,  $\sigma$ has the form $\sigma(x) = \alpha x+ \beta$ for some $\alpha, \beta \in \FF$ with $\alpha \neq 0$.    Hence,  $\A$ is isomorphic to the unital associative algebra over $\FF$ with generators $x,y$ subject to the defining relation $y x  = ( \alpha x+ \beta)y + h$, where $h$ is the polynomial given by $h(x) = \delta(x)$.    The next lemma reduces the study of  such Ore extensions to three specific types of algebras.   This result is essentially contained in Observation 2.1 of the paper \cite{AVV87} by Awami, Van den Bergh, and Van Oystaeyen (compare also \cite[Prop.~3.2]{AD97}), although the division into cases here is somewhat different from that given in those papers. 

\medskip
\begin{lemma}\label{lem:poly}  Assume $\A = \DD[y,\sigma,\delta]$ is an Ore extension with $\DD = \FF[x]$,
a polynomial algebra over a field $\FF$ of arbitrary characteristic, and $\sigma$ an automorphism of $\DD$.   Then $\A$ is isomorphic
to one of the following:
\begin{itemize}
\item[{\rm (a)}]   a  quantum plane
\item[{\rm (b)}]   a quantum Weyl algebra
\item[{\rm (c)}]   a unital associative algebra $\A_h$  with generators $x,y$ and defining relation
$yx = xy + h$ for some polynomial $h = h(x) \in \FF[x]$.  
\end{itemize}
\end{lemma}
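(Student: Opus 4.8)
The starting point is the normal form already recorded in the excerpt: after fixing an automorphism $\sigma$ of $\DD=\FF[x]$, we have $\sigma(x)=\alpha x+\beta$ with $\alpha\neq 0$, and $\A$ is the unital $\FF$-algebra on generators $x,y$ with the single relation $yx=(\alpha x+\beta)y+h$, where $h=\delta(x)\in\FF[x]$. The plan is to run a case analysis on $\alpha$ and then, within the remaining case, to normalize $h$ by an affine change of the variable $x$ (and a rescaling of $y$), using the fact from Theorem~\ref{thm:basicFactsOre}(1) that $\{y^n\}$ is a free basis, so that such substitutions genuinely produce isomorphic algebras and we are never secretly collapsing the algebra.

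First I would dispose of the case $\alpha\neq 1$. Here the idea is that the inhomogeneous term $h$ can be absorbed: one seeks a polynomial $p=p(x)$ such that the substitution $x'=x-p(x)$ (or rather, passing to a suitable generator of the form $x+\text{const}$ when $h$ has degree $0$, and more generally using that $\sigma-\mathrm{id}$ is invertible on the augmentation-type pieces when $\alpha\neq 1$) removes $h$ from the relation, leaving $y x' = (\alpha x'+\beta')y$, i.e.\ a quantum plane (after a further affine shift to kill $\beta'$ if $\beta'\neq 0$, which is possible precisely because $\alpha\neq 1$). Concretely, writing $h=\sum_i c_i x^i$, one checks that the operator $r\mapsto \sigma(r)-r$ on $\FF[x]$ is bijective on each graded piece when $\alpha\neq 1$ (its effect on $x^i$ is $(\alpha^i-1)x^i+\text{lower order}$, and $\alpha^i-1\neq 0$ for $i\geq 1$; the constant term is handled by shifting $y$), so one can solve for the needed change of generators. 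This is where one must be slightly careful in positive characteristic: $\alpha^i-1$ can vanish even when $\alpha\neq 1$, so the clean statement is that if $\alpha$ is not a root of unity one gets a quantum plane outright, while if $\alpha$ is a nontrivial root of unity one still lands among quantum planes / quantum Weyl algebras by the same substitution bookkeeping — and I would cite \cite{AVV87} (Observation~2.1) and \cite[Prop.~3.2]{AD97} for the precise match-up with the standard definitions of "quantum plane" and "quantum Weyl algebra" rather than re-deriving those normal forms.

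It then remains to treat $\alpha=1$, where the relation is $yx=(x+\beta)y+h$, i.e.\ $[y,x]=\beta y+h$. If $\beta\neq 0$, rescale: replacing $y$ by $\beta^{-1}y$ turns this into $[y,x]=y+\beta^{-1}h$, and absorbing the $\FF[x]$-coefficient appropriately (a substitution $x\mapsto x+\text{poly}$, legitimate since $\{y^n\}$ stays a basis) puts it in the standard quantum Weyl algebra form $yx = xy + y + \text{const}$, which after one more shift is the (suitably normalized) quantum Weyl algebra of parameter $1$. If $\beta=0$, the relation is exactly $yx=xy+h$ with $h=\delta(x)\in\FF[x]$, which is case (c), the algebra $\A_h$, and there is nothing left to do. I expect the main obstacle to be purely organizational rather than deep: getting the case boundaries (especially the root-of-unity subtleties in characteristic $p$) to line up exactly with the textbook definitions of quantum plane and quantum Weyl algebra, so that (a) and (b) are stated with the correct parameter ranges. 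Since the excerpt explicitly flags that "the division into cases here is somewhat different from that given in those papers," I would present the three cases cleanly and defer the verification that cases (a) and (b) coincide with the named algebras to the cited references \cite{AVV87,AD97}, reserving the self-contained argument for the reduction to case (c), which is the one that matters for the rest of the paper.
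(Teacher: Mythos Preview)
The paper does not give its own proof of this lemma; the text immediately preceding the statement attributes it to \cite[Obs.~2.1]{AVV87} and \cite[Prop.~3.2]{AD97}, and no \texttt{proof} environment follows. Your decision to cite those same references for the identification with quantum planes and quantum Weyl algebras is therefore exactly what the paper does.

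That said, the sketch you give around those citations contains two genuine errors. First, in the case $\alpha\neq 1$ the substitution that normalizes $h$ is $y\mapsto y+a(x)$, not a change of $x$ (a non-affine substitution $x'=x-p(x)$ is not an automorphism of $\FF[x]$). Under $y\mapsto y+a$ the defining relation becomes $yx=\sigma(x)y+h+\bigl(ax-\sigma(x)a\bigr)$, and with $\sigma(x)=\alpha x$ one has $ax-\sigma(x)a=(1-\alpha)xa$, so as $a$ ranges over $\FF[x]$ you can kill every term of $h$ except the constant. The operator involved is not $\sigma-\mathrm{id}$, and there is no root-of-unity obstruction at all: the dichotomy is simply whether the surviving constant is zero (quantum plane) or nonzero (quantum Weyl algebra), in any characteristic.

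Second, your treatment of $\alpha=1$, $\beta\neq 0$ is wrong on two counts. Rescaling $y\mapsto\beta^{-1}y$ does not change the coefficient of $y$ in $[y,x]=\beta y+h$; you still get $\beta\tilde y$, not $\tilde y$. More importantly, the correct normalization here (take $\tilde y=y+\beta^{-1}h$) yields $\tilde y x=(x+\beta)\tilde y$, i.e.\ $[\tilde y,x]=\beta\tilde y$, and after renaming generators this is the algebra $\A_x$ of case~(c), \emph{not} a quantum Weyl algebra. Indeed, the ``quantum Weyl algebra of parameter $1$'' you invoke is the ordinary Weyl algebra $\A_1$, which in characteristic zero is simple, whereas $\A_x$ is not (Corollary~\ref{C:simplicity}); they cannot be isomorphic.
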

 
Quantum planes and quantum Weyl algebras are generalized Weyl algebras in the sense of \cite[1.1]{bavula:gwar93} and their structure and  irreducible modules have been studied extensively in that context.  

Our aim in this paper is to give a detailed investigation of the algebras that arise in (c) of Lemma \ref{lem:poly}.   The algebra $\A_h$ is the Ore extension $\DD[y,\mathsf{id}_{\DD}, \delta]$ obtained from
the polynomial algebra $\DD = \FF[x]$ over the field $\FF$ by taking $h \in \DD$,  $\sigma$ to be the identity automorphism $\mathsf{id}_\DD$  on $\DD$, and $\delta: \DD \rightarrow \DD$ to be  the $\FF$-linear derivation with $\delta(f) = f'h$ for all $f \in \DD$,  where $f'$ denotes  the usual derivative of $f$ with respect to $x$.    
 
It  is convenient to regard $\A_h$ as  the unital associative algebra over $\FF$ with generators $x$, $y$ and defining relation $[y,x] = h$.  
Then  $[y,f] = \delta(f) = f'h$ holds in $\A_h$ for all $f \in \DD$.   Theorem \ref{thm:basicFactsOre}
 implies that $\A_h$ is both a left and right Noetherian domain with units $\FF^* 1$ and that 
$$\A_h = \bigoplus_{i \ge 0} \DD y^i,$$
where $\DD = \FF[x]$.     Hence,  $\{x^j y^i \mid j,i  \in \mathbb Z_{\ge 0}\}$ is a basis for $\A_h$ over $\FF$,   and  $\A_h$ has
Gelfand-Kirillov (GK) dimension 2 by \cite[Cor.\ 8.2.11]{McR01}.  
 
\begin{section}{The Embeddings $\A_g \subseteq \A_f$} \end{section}

Fix nonzero $f,g \in \DD = \FF[x]$.  In order to distinguish generators for the algebras
$\A_f$ and $\A_g$,   we will assume those for $\A_f$ are $x, y,1$, and those for $\A_g$ are $x, \tilde y,1$.  

\medskip
\begin{lemma} \label{lem:embed}  For $f, g, \in \DD$, suppose that $f\, |\, g$ and $g = fr$.   Then the map   $\psi:  \A_g \rightarrow \A_f$ with 
$$x \mapsto x, \hspace{.5in} \tilde y \mapsto  yr$$
gives  an embedding of $\A_g$ into $\A_f$.   \end{lemma}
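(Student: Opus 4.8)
The plan is to verify first that $\psi$ is a well-defined $\FF$-algebra homomorphism, and then to prove injectivity by a principal-symbol argument for the filtration of $\A_f$ by degree in $y$.

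For well-definedness, recall that $\A_g$ is the unital associative $\FF$-algebra on generators $x,\tilde y$ subject to the single relation $[\tilde y,x]=g$; so it suffices to check that $[yr,x]=g$ holds in $\A_f$. Since $r=r(x)$ lies in the copy of $\FF[x]$ inside $\A_f$, it commutes with $x$, and hence $yrx=(yx)r=(xy+f)r=xyr+fr$, giving $[yr,x]=fr=g$. Thus $\psi$ extends uniquely to an $\FF$-algebra homomorphism $\A_g\to\A_f$; it fixes $x$, so it is left $\FF[x]$-linear, and on the $\FF[x]$-basis $\{\tilde y^i\mid i\ge 0\}$ of $\A_g$ furnished by Theorem \ref{thm:basicFactsOre}(1) it is given by $\psi\bigl(\sum_i p_i(x)\tilde y^i\bigr)=\sum_i p_i(x)(yr)^i$ with $p_i\in\FF[x]$.

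For injectivity I would use the filtration on $\A_f=\bigoplus_{i\ge 0}\FF[x]y^i$ by $y$-degree. Since $[y,p]=p'f$ has $y$-degree $0$ for every $p\in\FF[x]$, this is an algebra filtration whose associated graded algebra $\mathrm{gr}\,\A_f$ is the commutative polynomial ring $\FF[x,Y]$, with $Y$ the symbol of $y$; in particular it is a domain. The symbol of $yr$ is $rY$, which is nonzero because $g=fr\ne 0$ forces $r\ne 0$; hence $(yr)^i$ has $y$-degree exactly $i$ with symbol $r^iY^i$. Consequently, if $a=\sum_{i=0}^{N}p_i(x)\tilde y^i\in\A_g$ with $p_N\ne 0$, then each summand $p_i(x)(yr)^i$ of $\psi(a)$ has $y$-degree $\le i\le N$, and the degree-$N$ component of $\psi(a)$ has symbol $p_N(x)r^NY^N$, which is nonzero since $\FF[x]$ is a domain. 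Therefore $\psi(a)\ne 0$, so $\psi$ is injective.

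I expect the only point requiring real care to be the identification of $\mathrm{gr}\,\A_f$ with a polynomial ring — equivalently, a short induction establishing $(yr)^i=r^iy^i+(\text{terms of }y\text{-degree}<i)$; the check that $\psi$ is well defined and the concluding leading-term comparison are both routine, and nothing in the argument depends on $\chara(\FF)$.
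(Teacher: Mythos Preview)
Your proof is correct. The well-definedness check matches the paper's verbatim: it simply observes $[yr,x]=[y,x]r=fr=g$ and stops there. You go further by supplying an explicit injectivity argument via the $y$-degree filtration on $\A_f$ and the associated graded ring $\FF[x,Y]$, whereas the paper leaves injectivity tacit (presumably because the leading-term comparison $(yr)^i=r^iy^i+\text{lower}$ is regarded as routine once one knows $\A_f=\bigoplus_{i\ge0}\DD y^i$). Your added care buys a self-contained argument that makes the dependence on $r\ne 0$ (equivalently $g\ne 0$, a standing hypothesis in this section) visible, at the cost of a few extra lines the authors chose to suppress.
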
 
\begin{proof}
This follows directly  from the observation that   
$[yr, x] =  [y, x]r=  fr = g$.
\end{proof}
 
\begin{cor} \label{cor:embed} 
For all nonzero $h  \in \FF[x]$,   there is an embedding of the algebra $\A_h$  into the Weyl algebra  $\A_1$.  \end{cor}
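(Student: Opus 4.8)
The plan is to derive this directly from Lemma \ref{lem:embed}, since the Weyl algebra $\A_1$ is precisely the member of the family obtained by taking $f = 1$. Concretely, I would observe that for any nonzero $h \in \FF[x]$ we trivially have $1 \mid h$ with $h = 1 \cdot h$, so the hypothesis of Lemma \ref{lem:embed} is satisfied with $f = 1$, $g = h$, and $r = h$.

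Applying Lemma \ref{lem:embed} in this case yields the map $\psi \colon \A_h \to \A_1$ determined by $x \mapsto x$ and $\tilde y \mapsto y h$, where $y$ is the generator of $\A_1$ satisfying $[y,x] = 1$. The lemma already guarantees this is an embedding, so nothing further is needed; one could additionally remark, for the reader's benefit, that under the standard realization of $\A_1$ as differential operators on $\FF[x]$ (with $x$ acting by multiplication and $y$ by $\tfrac{d}{dx}$), the image of $\tilde y$ is the operator $h\tfrac{d}{dx}$, matching the description of $\A_h$ as differential operators with polynomial coefficients mentioned in the introduction.

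There is essentially no obstacle here: the only thing being used is that $1$ divides every polynomial, and the verification that $[yh, x] = [y,x]h = h$ is exactly the computation performed in the proof of Lemma \ref{lem:embed}. The corollary is therefore immediate, and the proof amounts to a single invocation of that lemma with the specified choice of parameters.
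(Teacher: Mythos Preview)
Your proposal is correct and matches the paper's approach exactly: the corollary is stated without proof in the paper because it is immediate from Lemma~\ref{lem:embed} with $f=1$, $g=h$, $r=h$, which is precisely what you do. The additional remark about the differential-operator realization is a nice touch but not needed for the argument.
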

\medskip
 
Because we often use the embedding in Corollary \ref{cor:embed} as a mechanism for proving results,
and because the structure of $\A_0 = \FF[x,y]$ is very well understood, for the remainder of this paper
 we adopt the following conventions:
 \begin{conv}  \label{con:gens}  \qquad 
 
 \begin{itemize}
 \item  $\DD = \FF[x]$,  and the polynomial $h \in \DD$ is nonzero;
 \item   the generators of the Weyl algebra $\A_1$ are  $x, \, y, \, 1$;
 \item  the generators of the algebra $\A_h$ are $x, \, \hat y,\, 1$;    \label{eqn:def_yHat}
 \item  when $\A_h$ is viewed as a subalgebra of $\A_1$, then $\hat y = yh$.  
 \end{itemize}
 \end{conv}

The following result provides an important tool for recognizing elements of $\A_h$ inside of $\A_1$.
\medskip \begin{lemma}\label{lem:A_h-insideA_1}
Regard $\A_h \subseteq \A_1$ as in Conventions \ref{con:gens}.  Then 
$$\A_h = \bigoplus_{i \ge 0} \DD h^iy^i = \bigoplus_{i \ge 0} y^i h^i \DD.$$
\end{lemma}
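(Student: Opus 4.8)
The plan is to show the two claimed equalities by a double-inclusion argument, working inside $\A_1$ with the basis $\{x^j y^i \mid j,i \ge 0\}$ and the conventions that $\A_h$ is generated by $x$ and $\hat y = yh$. First I would establish $\A_h \subseteq \bigoplus_{i \ge 0} \DD h^i y^i$. Since $\A_h$ is generated as an $\FF$-algebra by $x$ and $\hat y = yh$, it suffices to check that the right-hand side is closed under left multiplication by $x$ and by $yh$, and contains $1$. Closure under left multiplication by $x$ is immediate since $x \in \DD$. For closure under left multiplication by $yh$, I would take a typical summand $f h^i y^i$ with $f \in \DD$ and compute $yh \cdot f h^i y^i = y (hf h^i) y^i$; using the Weyl-algebra relation $y p = p y + p'$ for $p \in \DD$ (here $p = h f h^i = f h^{i+1}$), this becomes $f h^{i+1} y^{i+1} + (f h^{i+1})' y^i$. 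The first term lies in $\DD h^{i+1} y^{i+1}$. For the second term, $(f h^{i+1})' y^i$, I need to see it lies in $\DD h^i y^i$, i.e.\ that $h^i$ divides $(f h^{i+1})'$; but by the Leibniz rule $(f h^{i+1})' = f' h^{i+1} + (i+1) f h^i h' = h^i\big(f' h + (i+1) f h'\big)$, which is visibly $h^i$ times a polynomial. Hence the right-hand side is a subalgebra containing the generators of $\A_h$, giving the first inclusion.

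Next I would prove the reverse inclusion $\bigoplus_{i\ge 0} \DD h^i y^i \subseteq \A_h$, which amounts to showing $f h^i y^i \in \A_h$ for every $f \in \DD$ and every $i \ge 0$. Since $f \in \DD \subseteq \A_h$, it is enough to show $h^i y^i \in \A_h$, and I would do this by induction on $i$, the cases $i=0,1$ being clear ($1 \in \A_h$ and $h y = \hat y \in \A_h$). For the inductive step, I would observe $h^{i+1} y^{i+1} = h^i y^i \cdot \hat y - \big(h^i y^i\big)\cdot(\text{correction})$ — more carefully, compute $h^i y^i \cdot (hy) = h^i y^i h y$ and move the middle $h$ past $y^i$ using $y^i h = h y^i + (\text{lower order in }y)$ iterated; concretely $y^i p = \sum_{k} \binom{i}{k} p^{(k)} y^{i-k}$ for $p \in \DD$. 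Thus $h^i y^i (hy) = h^i\big(\sum_k \binom{i}{k} h^{(k)} y^{i-k}\big) y = h^{i+1} y^{i+1} + \sum_{k\ge 1}\binom{i}{k} h^i h^{(k)} y^{i-k+1}$. Each term with $k \ge 1$ in the sum has the form $h^i h^{(k)} y^{i-k+1}$ with $i-k+1 \le i$; since $h^{i-k+1}$ divides $h^i$ for $k \ge 1$, this term lies in $\DD h^{i-k+1} y^{i-k+1}$, which by the induction hypothesis lies in $\A_h$. Since $h^i y^i \in \A_h$ and $\hat y = hy \in \A_h$, the product $h^i y^i (hy)$ lies in $\A_h$, and therefore so does $h^{i+1} y^{i+1}$, completing the induction. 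This establishes $\A_h = \bigoplus_{i\ge0} \DD h^i y^i$.

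Finally, for the second equality $\A_h = \bigoplus_{i\ge 0} y^i h^i \DD$, I would argue symmetrically: $\A_h$ is also generated by $x$ and $\hat y$ acting on the \emph{right}, so I would check that $\bigoplus_{i\ge0} y^i h^i \DD$ is closed under right multiplication by $x$ and by $yh$, using the relation $p y = y p - p'$ for $p \in \DD$ (the opposite-side Weyl relation), and conversely that $y^i h^i \in \A_h$ by an induction mirroring the one above. Alternatively — and this is probably cleanest — I would note that $y^i h^i$ and $h^i y^i$ differ by lower-order terms of exactly the same shape: expanding $y^i h^i$ via $y^i h^i = h^i y^i + (\text{terms } h^{i}\cdot(\text{stuff})\, y^{j} \text{ with } j<i)$ and checking the divisibility $h^j \mid (\text{coefficient})$, one sees $\bigoplus_i \DD h^i y^i = \bigoplus_i \DD y^i h^i$ as subspaces of $\A_1$; combined with symmetry of the first equality under the anti-automorphism $x \mapsto x$, $y \mapsto y$ of $\A_1$ (which fixes $\A_h$ since it fixes $\hat y = yh$ up to reordering), one deduces the second equality from the first. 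The main obstacle is bookkeeping the divisibility of $h$-powers through the Leibniz/commutation expansions — ensuring that at each step a term sitting over $y^j$ genuinely carries a factor $h^j$ — but this is exactly the computation sketched above and presents no conceptual difficulty.
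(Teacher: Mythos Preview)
Your argument for the first equality $\A_h = \bigoplus_{i\ge 0}\DD h^i y^i$ is correct and takes a genuinely different route from the paper. The paper derives an explicit product formula $y^i h^i = \hat y(\hat y+h')(\hat y+2h')\cdots(\hat y+(i-1)h')$ (via the commutation $(\hat y+jh')h = h(\hat y+(j+1)h')$) and uses it to match $\bigoplus_{i=0}^n \hat y^i \DD$ with $\bigoplus_{i=0}^n y^i h^i \DD$. You instead show the right-hand side is a subalgebra (via the Leibniz divisibility $h^i \mid (fh^{i+1})'$) and then prove $h^i y^i \in \A_h$ by an induction on $i$. Your route avoids the explicit factorization and is arguably more elementary; the paper's route has the advantage of producing the closed formula \eqref{eqn:y^kh^k}, which is used later (e.g.\ in the computation of the center in Theorem~\ref{L:center}).

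There are two slips to fix. First, in your base case you write ``$hy=\hat y$''; by Conventions~\ref{con:gens} it is $\hat y=yh$, so $hy=yh-h'=\hat y-h'$. Your conclusion $hy\in\A_h$ is still correct, just for a different reason. Second, and more seriously, the map $x\mapsto x$, $y\mapsto y$ is \emph{not} an anti-automorphism of $\A_1$: applying it to $yx-xy=1$ yields $xy-yx=1$, i.e.\ $-1=1$. The correct anti-automorphism (the one the paper uses) is $x\mapsto x$, $y\mapsto -y$; it sends $\hat y=yh$ to $h(-y)=-\hat y+h'\in\A_h$, hence restricts to $\A_h$, and applied to $\A_h=\bigoplus_i \DD h^i y^i$ it gives $\A_h=\bigoplus_i y^i h^i\DD$ directly. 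Your alternative ``$\bigoplus_i \DD h^i y^i=\bigoplus_i \DD y^i h^i$'' does not yet reach the target either, since the lemma has $\DD$ on the \emph{right} in the second decomposition; the anti-automorphism (with the corrected sign) is the clean way to pass from one to the other. Your option (a), the mirror-image right-multiplication argument, would also work once carried out.
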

\begin{proof}

We show that  $\bigoplus_{i=0}^n \hat y^i \DD = \bigoplus_{i = 0}^n  y^ih^i \DD$ for all $n \geq 0$, and from that
we can immediately conclude $\A_h = \bigoplus_{i \geq 0} y^i h^i \DD$.  
Observe for $j \in \ZZ,$
\begin{equation}\label{eqn:(yHat)(h)}
( \hat y+jh')h = h ( \hat y + (j+1)h').
\end{equation}
Also note that $yh = \hat y$ and $y^2 h^2 = y \hat y h = yh ( \hat y + h') = \hat y ( \hat y + h')$ hold.   It follows easily from \eqref{eqn:(yHat)(h)} and induction that 
\begin{equation}\label{eqn:y^kh^k}
y^i h^i = \hat y ( \hat y+h')( \hat y+2h') \cdots ( \hat y + (i-1)h')  \in \A_h.\end{equation}
This implies that   $ y^ih^i \DD  \subseteq \bigoplus_{j=0}^n \hat y^j \DD$ for $0 \leq i \leq n$. 
For the other containment,  we argue that $\hat y^n  \in \bigoplus_{i \ge 0}^n  y^ih^i \DD$ 
by induction on $n$,  with the $n = 1$ case simply being the definition,  $\hat y = yh$.  
Now from  (\ref{eqn:y^kh^k}) with $i = n$,   we have that
$y^nh^n =  \hat y^n + a$,  where $a \in  \sum_{j=0}^{n-1} \hat y^j \DD$.    Thus by induction,
$\hat y^n  =  y^n h^n - a $ where $a \in  \bigoplus_{i = 0}^{n-1}  y^ih^i \DD$, and 
the containment $\bigoplus_{i=0}^n \hat y^i \DD \subseteq \bigoplus_{i = 0}^n  y^ih^i \DD$  holds.

The anti-automorphism of $\A_1$ with $x \mapsto x$ and $y \mapsto -y$ sends $\hat y$ to $-\hat y + h'$.  Hence,
it restricts to an anti-automorphism of $\A_h$. When applied to $\A_h = \bigoplus_{i \geq 0} y^i h^i \DD$,
it gives $\A_h = \bigoplus_{i \geq 0} \DD h^i y^i$ and shows that 
\begin{equation}\label{eqn:h^ky^k}
h^iy^i = (\hat y -ih')( \hat y-(i-1)h') \cdots ( \hat y -h')  \in \A_h.\end{equation}  \end{proof}

\begin{section}{Localizations  and Ore Sets} \end{section}

The embedding $\A_{h}\subseteq\A_{1}$  suggests that there is a strong relationship between the skew fields of fractions of $\A_{h}$ and $\A_{1}$. In this section, we will see that  in fact these skew fields are identical. To show this result, we describe certain Ore sets in $\A_1$ and $\A_h$.   Our starting point is a computational lemma.  

\bigskip
\begin{lemma}\label{lem:yg^m_in_g^{m-1}A}
Fix $f, h \in \DD$, with $f\neq 0$.  If $0 \leq j \leq m$, then $\hat y^j f^m \in f^{m-j} \A_h$.
\end{lemma}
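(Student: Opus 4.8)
The plan is to fix $h$ (the polynomial $f$ in the statement plays the role of an auxiliary divisor, but in fact the argument works uniformly for any nonzero $f\in\DD$ since $h$ enters only through the commutation relation $[\hat y,f]=f'h$) and induct on $j$, keeping $m$ free. The base case $j=0$ is trivial: $f^m\in f^m\A_h$. For the inductive step, I want to push $\hat y$ across one copy of $f^m$ and pick up a factor of $f^{m-1}$, so the natural sub-claim is that $\hat y\, f^m\in f^{m-1}\A_h$, after which $\hat y^{j}f^m=\hat y^{j-1}(\hat y f^m)\in \hat y^{j-1}f^{m-1}\A_h\subseteq f^{(m-1)-(j-1)}\A_h=f^{m-j}\A_h$ by the induction hypothesis applied with $m$ replaced by $m-1$ and $j$ by $j-1$ (valid since $0\le j-1\le m-1$).

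So everything reduces to the single computation $\hat y f^m\in f^{m-1}\A_h$. Here I use that $\A_h=\bigoplus_{i\ge 0}\DD\hat y^i$, so it suffices to move $\hat y$ to the right past $f^m$. Using $[\hat y,f]=f'h$ in $\A_h$ (equivalently $\hat y f=f\hat y+f'h$), an easy induction on $m$ gives $\hat y f^m=f^m\hat y+mf^{m-1}f'h$; indeed $\hat y f^{m}=(\hat y f)f^{m-1}=(f\hat y+f'h)f^{m-1}=f(\hat y f^{m-1})+f'hf^{m-1}$, and the inductive formula for $\hat y f^{m-1}$ together with centrality of $f,f',h\in\DD$ among themselves finishes it. Both terms on the right, $f^m\hat y$ and $mf^{m-1}f'h$, manifestly lie in $f^{m-1}\A_h$ (the first because $f\hat y\in f\A_h$, the second obviously), so $\hat y f^m\in f^{m-1}\A_h$ as needed.

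There is essentially no obstacle here; the lemma is purely computational. The only mild point of care is bookkeeping in the double induction — making sure the hypothesis is stated with $m$ as a free parameter so that the step "apply the inductive hypothesis with $(m-1,j-1)$" is legitimate — and remembering that $f$, $f'$, $h$ all lie in the commutative subring $\DD=\FF[x]$, so they commute with one another freely even though they do not commute with $\hat y$. I would present the argument in exactly this order: state the double induction, dispose of $j=0$, isolate and prove the key identity $\hat y f^m=f^m\hat y+mf^{m-1}f'h$ by induction on $m$, and then conclude.
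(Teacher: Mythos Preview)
Your proof is correct and follows essentially the same approach as the paper: the paper's argument is simply the observation $\hat y f^m = f^m\hat y + (f^m)'h \in f^{m-1}\A_h$ followed by ``repeated application of this,'' which is exactly your induction on $j$. The only difference is cosmetic---the paper invokes the relation $[\hat y,g]=g'h$ with $g=f^m$ directly rather than re-deriving $\hat y f^m = f^m\hat y + mf^{m-1}f'h$ by a separate induction on $m$.
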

\begin{proof}
Observe that 
$$\hat yf^m = f^m\hat y + (f^m)' h   \in f^{m-1} \A_h.$$
Repeated application of this gives the claim.
\end{proof}

\begin{lemma}\label{lem:OreSetGeneral}
Fix $f, h \in \DD$, with $f\neq 0$.  Then the set $\Sigma = \{ f^n \mid n\geq 0 \}$ is a left and right Ore set of regular elements in $\A_h$.
\end{lemma}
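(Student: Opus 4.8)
The plan is to verify the left and right Ore conditions for $\Sigma = \{f^n \mid n \geq 0\}$ directly, using the fact that $\A_h = \bigoplus_{i \geq 0} \DD \hat y^i$ is a free left $\DD$-module on the powers of $\hat y$, so that it suffices to check the Ore conditions on basis elements. Since $\A_h$ is a domain (Theorem \ref{thm:basicFactsOre}), every nonzero element is regular, so the only real content is the Ore property. Because $\Sigma$ is multiplicatively closed, it is enough to show: for each $a \in \A_h$ and each $n \geq 0$, there exist $a' \in \A_h$ and $m \geq 0$ with $f^m a = a' f^n$ (right Ore: $a \Sigma \cap \Sigma \A_h \neq \emptyset$ is handled symmetrically via the left version, or one uses the anti-automorphism of $\A_h$ from the proof of Lemma \ref{lem:A_h-insideA_1} to pass between the two sides).

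For the left Ore condition — given $a \in \A_h$ and $f^n \in \Sigma$, find $f^m \in \Sigma$ and $b \in \A_h$ with $f^m a = b f^n$ — I would write $a = \sum_{i=0}^{k} c_i \hat y^i$ with $c_i \in \DD$ and argue by linearity, reducing to the case $a = c \hat y^i$ with $c \in \DD$. Then $f^n c \hat y^i = c f^n \hat y^i$, since $\DD = \FF[x]$ is commutative, so it suffices to move $f^n$ past $\hat y^i$ from the left at the cost of multiplying on the left by a suitable power of $f$. This is exactly the content of Lemma \ref{lem:yg^m_in_g^{m-1}A}: that lemma gives $\hat y^j f^m \in f^{m-j}\A_h$ whenever $0 \le j \le m$, i.e., $f^m \hat y^j$ — wait, the lemma is stated as $\hat y^j f^m \in f^{m-j}\A_h$, which says $\hat y^j f^m = f^{m-j} b$ for some $b \in \A_h$. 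Taking $m \ge n + i$ and applying this with $j = i$, we get $\hat y^i f^{n} \cdot f^{m-n} = f^{m - i} b$, hence $f^{m-i} b f^{-(m-n-i)}$... rather, more cleanly: we want $f^m a \in \A_h f^n$. Write $f^m c \hat y^i = c f^m \hat y^i$; then I claim $f^m \hat y^i \in \A_h f^{m-i}$ for $m \ge i$, which is the mirror image of Lemma \ref{lem:yg^m_in_g^{m-1}A} obtained either by a direct computation ($f^m \hat y = \hat y f^m - (f^m)'h \in \A_h f^{m-1}$, then iterate) or by applying the anti-automorphism. Choosing $m = n + i$ then gives $f^{n+i} a \in \A_h f^n$, as required.

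For the right Ore condition — given $a$ and $f^n$, find $f^m$ and $b$ with $a f^n = f^m b$... actually this is literally Lemma \ref{lem:yg^m_in_g^{m-1}A} after the same reduction to $a = c\hat y^i$: we have $c \hat y^i f^n \in c f^{\max(0,\,?)}\A_h$, and picking $m \ge n + i$ in that lemma, $\hat y^i f^m \in f^{m-i}\A_h$, so $\hat y^i f^n = \hat y^i f^m f^{n-m}$ doesn't make sense for $m > n$; instead simply note $a f^{n} f^{i} = a f^{n+i}$ and apply Lemma \ref{lem:yg^m_in_g^{m-1}A} with that exponent to land in $f^{n}\A_h$. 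Either orientation of the argument works; the honest statement is that Lemma \ref{lem:yg^m_in_g^{m-1}A} and its anti-automorphic image between them furnish both Ore conditions once one reduces to $\DD\hat y^i$-basis elements by $\DD$-linearity and commutativity of $\DD$. I do not expect a genuine obstacle here — regularity is automatic in a domain, and the combinatorial heart has already been isolated in Lemma \ref{lem:yg^m_in_g^{m-1}A}; the only point requiring a little care is bookkeeping the exponents so that the power of $f$ produced is large enough to absorb $f^n$ together with the $\hat y$-degree of $a$, and making sure one invokes the correct left/right version of the auxiliary lemma (using the anti-automorphism of $\A_h$ to get the one not explicitly stated).
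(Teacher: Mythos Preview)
Your approach is essentially the same as the paper's: regularity is immediate since $\A_h$ is a domain, the key computational input is Lemma~\ref{lem:yg^m_in_g^{m-1}A}, and the second Ore condition follows by a symmetric argument (the paper says ``a similar argument''; you invoke the anti-automorphism, which is equally fine). The one substantive simplification in the paper that you miss is the reduction to $s=f$: since $\Sigma$ is multiplicatively closed, it suffices to verify the Ore condition for the single element $f$, and then for $a=\sum_{i=0}^k r_i\hat y^i$ the choice $s_1=f^{k+1}$ gives $as_1\in f\A_h$ directly from Lemma~\ref{lem:yg^m_in_g^{m-1}A}. This eliminates the exponent bookkeeping that trips you up in several places (your false starts with $f^{n-m}$, the mislabeling of which variable is given and which is sought in the right Ore condition, etc.). The mathematics underneath your proposal is correct, but the write-up would benefit greatly from adopting that reduction.
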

\begin{proof}
That $\Sigma$ consists of regular elements follows from the fact that $\A_h$ is a domain.  Let $a \in \A_h$ and $s \in \Sigma$.  We must show that there exist $a_1 \in \A_h$ and $s_1 \in \Sigma$ such that $as_1 = sa_1$.  It is enough to consider the case $s = f$. Write $a = \sum_{i=0}^k r_i \hat y^i$ and set $s_1 = f^{k+1}$.  By Lemma \ref{lem:yg^m_in_g^{m-1}A}, we see that 
$$as_1 = \sum_{i=0}^k r_i \hat y^i f^{k+1} \in \sum_{i=0}^k r_i f \A_h \subseteq f \A_h = s \A_h.$$
A similar argument shows that $\Sigma$ is a left Ore set.
\end{proof}
 
\begin{cor}\label{cor:hPowers-OreSet}
Regard $\A_h$ as a subalgebra of $\A_1$ as in Conventions \ref{con:gens}. Let $\Sigma=\{ h^{n}\mid n\geq 0 \}$. Then $\Sigma$ is a left and right Ore set of regular elements in both $\A_{1}$ and $\A_{h}$, and the corresponding localizations are equal:
$$\A_{1}\Sigma^{-1}=\A_{h}\Sigma^{-1}.$$
\end{cor}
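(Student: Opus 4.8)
The Ore-set assertions are instances of Lemma~\ref{lem:OreSetGeneral}: applying that lemma with $f=h$, once to the algebra $\A_h$ and once to the Weyl algebra $\A_1$ (which is the member $\A_g$ of our family with $g$ the constant polynomial $1$), shows that $\Sigma=\{h^{n}\mid n\ge 0\}$ is a left and right Ore set of regular elements in each of $\A_h$ and $\A_1$; in particular both localizations $\A_h\Sigma^{-1}$ and $\A_1\Sigma^{-1}$ exist. The remaining content is the equality of these two localizations.

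To compare them I would realize both inside a common overring. Let $Q$ be the skew field of fractions of $\A_1$. Since $\A_h\subseteq\A_1\subseteq Q$ and every element of $\Sigma$ is invertible in $Q$, the universal property of Ore localization identifies $\A_h\Sigma^{-1}$ and $\A_1\Sigma^{-1}$ with the subrings of $Q$ generated by $\A_h$, respectively $\A_1$, together with $\{h^{-n}\mid n\ge 0\}$, compatibly with the inclusion $\A_h\subseteq\A_1$; here one uses the standard fact that an embedding of an Ore domain into a division ring extends to its division ring of fractions. With this in hand $\A_h\Sigma^{-1}\subseteq\A_1\Sigma^{-1}$ is immediate, and since $\{h^{-n}\mid n\ge 0\}\subseteq\A_h\Sigma^{-1}$, the reverse inclusion reduces to showing $\A_1\subseteq\A_h\Sigma^{-1}$, equivalently, that for each $b\in\A_1$ there is an $n\ge 0$ with $bh^{n}\in\A_h$.

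For this last point I would invoke Lemma~\ref{lem:A_h-insideA_1}. Write $b=\sum_{i=0}^{k}r_i y^{i}$ with $r_i\in\DD$, using the basis $\A_1=\bigoplus_{i\ge 0}\DD y^{i}$. By formula~\eqref{eqn:y^kh^k} we have $y^{i}h^{i}\in\A_h$ for every $i$, and $\DD\subseteq\A_h$, so for $0\le i\le k$ it follows that $y^{i}h^{k}=(y^{i}h^{i})\,h^{k-i}\in\A_h$; hence $bh^{k}=\sum_{i=0}^{k}r_i\,(y^{i}h^{k})\in\A_h$, and therefore $b=(bh^{k})h^{-k}\in\A_h\Sigma^{-1}$. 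This gives $\A_1\subseteq\A_h\Sigma^{-1}$ and hence the asserted equality $\A_1\Sigma^{-1}=\A_h\Sigma^{-1}$.

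The substantive input is Lemma~\ref{lem:A_h-insideA_1}; granting it, the computation in the previous paragraph is routine. The only step demanding any real care is the bookkeeping in the middle step — placing $\A_h\Sigma^{-1}$ and $\A_1\Sigma^{-1}$ inside a single ring so that the two inclusions can be compared — and, as indicated, this is handled once and for all by passing to the skew field of fractions $Q$ of $\A_1$.
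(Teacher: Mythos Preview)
Your proof is correct and follows essentially the same outline as the paper's: apply Lemma~\ref{lem:OreSetGeneral} twice to get the Ore-set assertions, then verify the two inclusions between the localizations. The only difference is in the reverse inclusion $\A_1\Sigma^{-1}\subseteq\A_h\Sigma^{-1}$: you show directly that every $b\in\A_1$ satisfies $bh^k\in\A_h$ for suitable $k$ via Lemma~\ref{lem:A_h-insideA_1}, whereas the paper simply observes that $\A_h\Sigma^{-1}$ contains the element $\hat y\,h^{-1}=yhh^{-1}=y$, so it contains the generators of $\A_1$ and hence all of $\A_1\Sigma^{-1}$. Your argument is a bit more explicit; the paper's is a one-line shortcut.
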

\begin{proof}
By applying Lemma \ref{lem:OreSetGeneral} to $\A_{1}$ with $\Sigma=\{ h^{n}\mid n\geq 0 \}$, and then to $\A_{h}$ with $f = h$, we see that $\Sigma$ is a left and right Ore set in both $\A_1$ and $\A_h$.  Clearly $\A_h \Sigma^{-1} \subseteq \A_1 \Sigma^{-1}$ since $\A_h \subseteq \A_1$.  That $\A_1 \Sigma^{-1} \subseteq \A_h \Sigma^{-1}$ follows from the fact that $\A_h \Sigma^{-1}$ contains the element $\hat y h^{-1} = yhh^{-1} = y$.
\end{proof}

\begin{cor}\label{C:Weylfield} The skew field of fractions of $\A_h$ is isomorphic to the skew field of fractions of the Weyl algebra $\A_1$
(commonly referred to as the Weyl field).  \end{cor}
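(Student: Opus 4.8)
The plan is to obtain this corollary as an immediate consequence of Corollary \ref{cor:hPowers-OreSet}. By Theorem \ref{thm:basicFactsOre}, both $\A_1$ and $\A_h$ are Noetherian domains, hence left and right Ore domains, so each possesses a skew field of fractions; write $\mathcal{D}(\A_1)$ and $\mathcal{D}(\A_h)$ for these, and recall that $\mathcal{D}(\A_1)$ is the Weyl field. We want to identify $\mathcal{D}(\A_h)$ with $\mathcal{D}(\A_1)$ once $\A_h$ is realized inside $\A_1$ as in Conventions \ref{con:gens}.

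The one general fact I would invoke is that Ore localization at a set of regular elements does not enlarge the skew field of fractions. Concretely, if $R$ is an Ore domain and $\Sigma \subseteq R$ is a left and right Ore set of regular elements, then $R \subseteq R\Sigma^{-1} \subseteq \mathcal{D}(R)$: the localization $R\Sigma^{-1}$ embeds into $\mathcal{D}(R)$ because each $s \in \Sigma$ is already invertible there, and every element of $R\Sigma^{-1}$ has the form $r s^{-1}$ with $r \in R$, $s \in \Sigma$. In particular $R\Sigma^{-1}$ is itself a domain, so it has a skew field of fractions $\mathcal{D}(R\Sigma^{-1})$, and since $R \subseteq R\Sigma^{-1} \subseteq \mathcal{D}(R)$ with $\mathcal{D}(R)$ generated as a skew field by $R$, one concludes $\mathcal{D}(R\Sigma^{-1}) = \mathcal{D}(R)$.

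Now I apply this twice with $\Sigma = \{\, h^n \mid n \ge 0 \,\}$, which by Corollary \ref{cor:hPowers-OreSet} is simultaneously a left and right Ore set of regular elements in $\A_1$ and in $\A_h$. Taking $R = \A_1$ gives $\mathcal{D}(\A_1 \Sigma^{-1}) = \mathcal{D}(\A_1)$, and taking $R = \A_h$ gives $\mathcal{D}(\A_h \Sigma^{-1}) = \mathcal{D}(\A_h)$. Since Corollary \ref{cor:hPowers-OreSet} asserts the equality of rings $\A_1 \Sigma^{-1} = \A_h \Sigma^{-1}$, we get
\[
\mathcal{D}(\A_h) = \mathcal{D}(\A_h\Sigma^{-1}) = \mathcal{D}(\A_1\Sigma^{-1}) = \mathcal{D}(\A_1),
\]
which is the Weyl field, as claimed.

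Because the entire argument rests on the already-established Corollary \ref{cor:hPowers-OreSet} together with a standard property of Ore localization, I do not expect any substantive obstacle. The only point deserving a sentence of care is checking that $\A_1\Sigma^{-1}$ (equivalently $\A_h\Sigma^{-1}$) is again an Ore domain, so that referring to ``its'' skew field of fractions is legitimate; this is automatic from the inclusions $\A_1 \subseteq \A_1\Sigma^{-1} \subseteq \mathcal{D}(\A_1)$.
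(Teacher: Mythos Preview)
Your proof is correct and follows exactly the route the paper intends: the corollary is stated without proof immediately after Corollary~\ref{cor:hPowers-OreSet}, so the authors regard it as an immediate consequence of $\A_1\Sigma^{-1}=\A_h\Sigma^{-1}$ together with the standard fact that Ore localization at regular elements does not change the skew field of fractions. Your write-up simply makes that implicit step explicit.
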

 
\begin{cor} Assume $\A_h \subseteq \A_1$ as in Conventions \ref{con:gens}.    Then the following are equivalent:
\begin{itemize}
\item[{\rm (1)}]  $h \in \FF^*$.
\item[{\rm (2)}] $\A_1$ is a Noetherian (left or right) $\A_h$-module.
\item[{\rm (3)}] $\A_{1}$ is a free (left or right) $\A_{h}$-module.
\end{itemize}
\end{cor}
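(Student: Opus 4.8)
\emph{Plan.} I would establish the cycle $(1)\Rightarrow(3)\Rightarrow(2)\Rightarrow(1)$, working throughout with \emph{left} $\A_h$-modules. The left/right dichotomy costs nothing extra: the anti-automorphism of $\A_1$ sending $x\mapsto x$, $y\mapsto -y$ restricts to an anti-automorphism of $\A_h$ (as observed in the proof of Lemma \ref{lem:A_h-insideA_1}), and any anti-automorphism of a ring $R$ carries left $R$-modules to right $R$-modules while preserving both freeness and the ascending chain condition. Hence $\A_1$ is free (resp.\ Noetherian) as a left $\A_h$-module if and only if it is free (resp.\ Noetherian) as a right $\A_h$-module, and it suffices to treat one side.

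\emph{$(1)\Rightarrow(3)$ and $(3)\Rightarrow(2)$.} If $h\in\FF^*$, then by Conventions \ref{con:gens} we have $y=h^{-1}\hat y\in\A_h$, so $\A_h=\A_1$ and $\A_1$ is free of rank one over $\A_h$. For $(3)\Rightarrow(2)$, suppose $\A_1=\bigoplus_{\lambda\in\Lambda}\A_h v_\lambda$ is free as a left $\A_h$-module, and localize at the Ore set $\Sigma=\{h^n\mid n\ge 0\}$ furnished by Corollary \ref{cor:hPowers-OreSet}. Since localization is exact and commutes with direct sums, and since $\A_1\Sigma^{-1}=\A_h\Sigma^{-1}$ by that corollary, we obtain $\A_h\Sigma^{-1}\cong\bigoplus_{\lambda\in\Lambda}\A_h\Sigma^{-1}$ as left $\A_h\Sigma^{-1}$-modules. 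But $\A_h\Sigma^{-1}$, being an Ore localization of the left Noetherian ring $\A_h$, is itself left Noetherian, hence Noetherian as a module over itself; a direct sum of infinitely many nonzero copies of it would fail the ascending chain condition, so $\Lambda$ is finite. Thus $\A_1$ is a finitely generated module over the left Noetherian ring $\A_h$, and therefore Noetherian.

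\emph{$(2)\Rightarrow(1)$.} I argue by contraposition: assume $h\notin\FF^*$, so that $h$ is a non-unit of $\FF[x]$, and show that $\A_1$ is not even finitely generated as a left $\A_h$-module. For $N\ge0$ set $M_N=\sum_{k=0}^N\A_h y^k$. By Lemma \ref{lem:A_h-insideA_1} we have $\A_h y^k=\bigoplus_{i\ge0}\DD h^i y^{i+k}$ inside $\A_1=\bigoplus_{k\ge0}\DD y^k$, so the projection of $M_N$ onto the summand $\DD y^{N+1}$ equals $\bigl(\sum_{k=0}^{N}h^{N+1-k}\DD\bigr)y^{N+1}=h\DD\, y^{N+1}$, which is properly contained in $\DD y^{N+1}$ since $h$ is a non-unit. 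Hence $M_N\ne\A_1$ for every $N$. On the other hand, any finite subset of $\A_1$ lies in $\bigoplus_{k\le N}\DD y^k$ for $N$ large enough, and since $\DD\subseteq\A_h$, the left $\A_h$-submodule it generates is contained in $\sum_{k\le N}\A_h y^k=M_N$. So no finite subset generates $\A_1$, and $(2)$ fails.

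\emph{Main obstacle.} The only delicate point is the passage $(3)\Rightarrow(2)$: one must check that localizing the free left $\A_h$-module $\A_1$ at $\Sigma$ produces a free $\A_h\Sigma^{-1}$-module of the same rank, and that this module localization coincides with the ring localization $\A_1\Sigma^{-1}$ of Corollary \ref{cor:hPowers-OreSet}. Both are standard once one knows $\Sigma$ is a two-sided Ore set of regular elements in $\A_1$ — which is precisely the content of Corollary \ref{cor:hPowers-OreSet} — and in fact the equality $\A_1\Sigma^{-1}=\A_h\Sigma^{-1}$ then forces $\rank_{\A_h}\A_1=1$, sharpening "finite" to "equal to one". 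Everything in $(2)\Rightarrow(1)$ is routine bookkeeping with the decomposition of Lemma \ref{lem:A_h-insideA_1}.
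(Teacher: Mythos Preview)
Your proof is correct and uses the same two ingredients as the paper: the localization $\A_1\Sigma^{-1}=\A_h\Sigma^{-1}$ from Corollary~\ref{cor:hPowers-OreSet} to force a free $\A_h$-basis of $\A_1$ to be finite, and the filtration by the submodules $\sum_{k\le N}\A_h\, y^k$ (the paper's $\mathcal{Y}_N$) together with Lemma~\ref{lem:A_h-insideA_1} to show $\A_1$ is not finitely generated over $\A_h$ when $h\notin\FF^*$. Your organization as a cycle $(1)\Rightarrow(3)\Rightarrow(2)\Rightarrow(1)$, deducing ``not Noetherian'' directly from ``not finitely generated'', is marginally more economical than the paper's separate treatment of $(1)\Leftrightarrow(2)$ and $(1)\Leftrightarrow(3)$ (which first exhibits a non-stabilizing chain and only then concludes that $\A_1$ is not finitely generated), but the underlying arguments are essentially the same.
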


\begin{proof}
If $h\in\FF^{*}$,  then the embedding $\A_{h}\subseteq \A_{1}$ considered in this section is an equality.  Thus as an $\A_h$-module,  $\A_{1}$ is free of rank one,  and it is Noetherian. 

Now assume $h\notin\FF$. For each $k\geq 0$, consider the right $\A_h$-submodule 
$$
\mathcal{Y}_{k}=\A_{h}+y\A_{h}+\cdots +y^{k}\A_{h}\subseteq \A_{1}.
$$
If $\sum_{i\geq 0}r_{i}y^{i}\in\mathcal{Y}_{k}$, with $r_{i}\in\DD$, it is easy to conclude that $h$ divides $r_{i}$ for all $i\geq k+1$. Thus, $y^{k+1}\in\mathcal{Y}_{k+1}\setminus\mathcal{Y}_{k}$ and the chain of submodules
$$
(0)\subset \A_{h}=\mathcal{Y}_{0}\subset \mathcal{Y}_{1}\subset \mathcal{Y}_{2}\subset \cdots
$$
does not terminate. In particular, $\A_{1}$ is not a Noetherian $\A_{h}$-module. Since $\A_{h}$ is a Noetherian ring, it follows that $\A_{1}$ is not a finitely generated $\A_{h}$-module either. Assume there exist elements $0\neq t_{i}\in\A_{1}$, $i\in {\tt I}$, such that
\begin{equation*}
\A_{1}=\bigoplus_{i\in {\tt I}}t_{i}\A_{h}.
\end{equation*}
Consider the Ore set $\Sigma=\{ h^{n}\mid n\geq 0 \}$. It follows that $\A_{1}\Sigma^{-1}=\bigoplus_{i\in {\tt I}}t_{i}\A_{h}\Sigma^{-1}$. By Corollary~\ref{cor:hPowers-OreSet} we have $\A_{1}\Sigma^{-1}=\A_{h}\Sigma^{-1}=: \mathsf B$ and thus $\mathsf B=\bigoplus_{i\in {\tt I}}t_{i} \mathsf B$. This implies that ${\tt I}$ must be finite, as the decomposition of $1\in\mathsf B$ uses only finitely many summands. This contradicts the fact that $\A_{1}$ is not a finitely generated $\A_{h}$-module. Hence, $\A_{1}$ is not a free right $\A_{h}$-module. This proves the corollary for when $\A_{1}$ is considered as a right $\A_{h}$-module. The left-hand version is analogous.  
\end{proof}

\begin{section}{The Center of $\A_h$} \end{section}

In this section, we describe the center $\mathsf Z(\A_h)$ of $\A_h$ and show in Proposition \ref{prop:A_h-freeOverZ} that $\A_h$ is free over $\mathsf Z(\A_h)$.    In the case of the Weyl algebra,
the center is $\FF1$ when $\chara(\FF) = 0$. When $\chara(\FF) = p > 0$, the center  has been described   by Revoy in \cite{R73} (see also \cite{MakLim84}) as follows:

\medskip
\begin{lemma}\label{L:ML:centA1} 
Suppose $\chara (\FF) = p>0$.  Then the center of $\A_1$ is the unital subalgebra generated by the elements $x^p$ and $y^p$.
\end{lemma}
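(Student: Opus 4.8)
To show the center of the Weyl algebra $\A_1$ in characteristic $p>0$ is exactly $\FF[x^p,y^p]$, I would argue by two containments. For the easy inclusion, note that in $\A_1$ one has $[y,x^p] = \sum_{j=0}^{p-1} x^j [y,x] x^{p-1-j} = p x^{p-1} = 0$ (or directly $[y,x^p]=\delta(x^p)=(x^p)'=px^{p-1}=0$ using $\delta(f)=f'$ when $h=1$), so $x^p$ commutes with $y$, and trivially with $x$; similarly, applying the anti-automorphism $x\mapsto x$, $y\mapsto -y$ (or the symmetric computation $[y^p,x]=0$, which follows since $\ad_y^p = \ad_{y^p}$ in characteristic $p$ and $\ad_y^p(x)=0$ as $\ad_y(x)=1$ is central), $y^p$ is central. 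Hence the unital subalgebra generated by $x^p,y^p$ lies in $\Z(\A_1)$.

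**The main inclusion.** For the reverse containment, I would use the PBW basis $\{x^j y^i : i,j\ge 0\}$ of $\A_1$ over $\FF$ and compute the action of $\ad_x$ and $\ad_y$ on such monomials. Take a central element $z = \sum_{i,j} c_{ij} x^j y^i$. First compute $[z,x]$: since $[y^i,x] = i y^{i-1}$ (again by $\ad_x$ being a derivation, or by induction using $[y,x]=1$), we get $[z,x] = \sum_{i,j} c_{ij}\, i\, x^j y^{i-1} = 0$, which forces $c_{ij} = 0$ whenever $i \not\equiv 0 \pmod p$; thus $z$ is a polynomial in $x$ and $y^p$. Symmetrically, computing $[z,y] = \sum_{i,j} c_{ij}\,(-j)\, x^{j-1} y^i = 0$ (using $[x^j,y] = -[y,x^j] = -j x^{j-1}$) forces $c_{ij}=0$ whenever $j\not\equiv 0\pmod p$. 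Combining, $c_{ij}=0$ unless $p\mid i$ and $p\mid j$, so $z \in \FF[x^p, y^p]$.

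**The anticipated obstacle.** The only genuinely delicate point is justifying the commutator formulas $[y^i,x] = i y^{i-1}$ and $[x^j,y] = -j x^{j-1}$ cleanly — these follow from $\ad_x,\ad_y$ being derivations of $\A_1$ together with $[y,x]=1$ being central, by an immediate induction, so I would state them as a one-line observation rather than belabor them. A secondary subtlety is that after imposing $[z,x]=0$ one must be careful that the surviving monomials $x^j y^{pk}$ still form a linearly independent set so that the subsequent bracket with $y$ can be read off coefficientwise; this is immediate from the PBW basis. Everything else is a routine coefficient comparison, so I expect the proof to be short, with the bulk of the work being the bookkeeping of which coefficients are killed by each of the two commutators.
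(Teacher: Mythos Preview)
Your argument is correct. The paper, however, does not supply its own proof of this lemma: it is stated as a known result attributed to Revoy \cite{R73} (see also \cite{MakLim84}), so there is no in-paper proof to compare against directly. That said, your computation is essentially the standard one, and the paper itself later carries out half of your argument verbatim in the proof of Lemma~\ref{L:commcent}, where it shows $\mathsf{C}_{\A_1}(x) = \FF[x,y^p]$ by exactly the bracket calculation $[a,x] = \sum_i i r_i y^{i-1}$ that you describe. Completing the center computation from there by the symmetric bracket with $y$, as you do, is the natural step, so your approach is fully in line with the methods the paper uses elsewhere.
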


In determining $\centh$ for arbitrary $h$, we will use the following result which can be shown 
by a straightforward inductive argument.    
 
\begin{lemma}\label{lem:identity_y^nf}  Regard $\A_h\subseteq \A_1$  as in Conventions \ref{con:gens}.  
Let $\delta: \DD \to \DD$ be the derivation with $\delta (f) = hf'$ for all $f \in \DD$.  Then 
\begin{eqnarray}&& [\hat y^n, f] = \sum_{j=1}^n {n \choose j} \delta^j (f) \hat y^{n-j} \qquad \hbox{\rm in} \ \  \A_h, \label{eq:Ahcom} \\ 
&&[y^n,f] = \sum_{j=1}^n {n \choose j} f^{(j)} y^{n-j} \ \  \qquad \hbox{\rm in} \ \ \A_1, \label{eq:A1com} \end{eqnarray}
where $f^{(j)} = (\frac{d}{dx})^j(f)$.  
\end{lemma}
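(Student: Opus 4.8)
The plan is to prove both identities \eqref{eq:Ahcom} and \eqref{eq:A1com} by induction on $n$, since they are formally parallel: the second is just the first with $h = 1$ (so that $\delta = \frac{d}{dx}$ and $\hat y = y$), so I would really only carry out the argument for \eqref{eq:Ahcom} and remark that \eqref{eq:A1com} follows by the same computation inside $\A_1$. The base case $n = 1$ is the defining relation rewritten as $[\hat y, f] = \delta(f) = hf'$, which matches the right-hand side since the sum has its single term ${1\choose 1}\delta(f)\hat y^0$.

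For the inductive step, I would assume \eqref{eq:Ahcom} holds for $n$ and compute $[\hat y^{n+1}, f] = \hat y[\hat y^n, f] + [\hat y, f]\hat y^n$ using the Leibniz-type identity $[ab,c] = a[b,c] + [a,c]b$. The second summand is $\delta(f)\hat y^n$. For the first, I would substitute the inductive formula to get $\hat y \sum_{j=1}^n {n\choose j}\delta^j(f)\hat y^{n-j}$, then move $\hat y$ past each coefficient $\delta^j(f) \in \DD$ via $\hat y\, g = g\,\hat y + \delta(g)$, producing $\sum_{j=1}^n {n\choose j}\bigl(\delta^j(f)\hat y^{n+1-j} + \delta^{j+1}(f)\hat y^{n-j}\bigr)$. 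Collecting all terms and reindexing the second sum by $j \mapsto j-1$, the coefficient of $\delta^j(f)\hat y^{n+1-j}$ becomes ${n\choose j} + {n\choose j-1} = {n+1\choose j}$ by Pascal's rule, with the boundary terms ($j=1$ from the $\delta(f)\hat y^n$ piece, and $j = n+1$ from the reindexed sum) filling in correctly. This yields exactly \eqref{eq:Ahcom} with $n$ replaced by $n+1$.

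The only point requiring a little care is keeping the two index ranges aligned when combining sums and checking that the endpoints $j=1$ and $j=n+1$ receive the right binomial coefficients; since ${n\choose 0} = {n\choose n+1}\cdot 0$ is not literally what appears, I would instead just verify those two extreme terms by hand. This is entirely routine, which is why the statement of the lemma already advertises it as "a straightforward inductive argument"; there is no genuine obstacle, only bookkeeping. One should note that the binomial coefficients ${n\choose j}$ are integers interpreted in $\FF$, so the identity is valid over a field of arbitrary characteristic — nothing in the induction divides by anything — and in positive characteristic $p$ the vanishing of ${p\choose j}$ for $0 < j < p$ is exactly what will later make $\hat y^p$ (respectively $y^p$) commute with elements of $\DD$, which is presumably how Lemma \ref{L:ML:centA1} gets used.
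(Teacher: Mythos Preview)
Your proof is correct and is exactly the ``straightforward inductive argument'' the paper alludes to but does not write out; the paper gives no proof beyond that phrase. Your handling of the endpoint terms and the observation that \eqref{eq:A1com} is the $h=1$ instance of the same computation are both fine.
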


\begin{thm}\label{L:center}
Regard $\A_h\subseteq \A_1$  as in Conventions \ref{con:gens}.  
\begin{enumerate}
\item[{\rm (1)}]  If $\chara(\FF) = 0$, then the center of $\A_h$ is $\FF 1$.
\item[{\rm (2)}]  If $\chara(\FF) = p > 0$, then the center of $\A_h$ is isomorphic to the polynomial algebra $\FF[x^p, h^py^p]$, where 
\begin{equation}\label{eq:centgen} h^p y^p = y^ph^p = \hat y ( \hat y+h')( \hat y+2h') \cdots ( \hat y + (p-1)h') = \hat y^p - \frac{\delta^p(x)}{h} \hat y.\end{equation}
\end{enumerate}
\end{thm}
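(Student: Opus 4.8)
## Proof Proposal

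The plan is to prove both parts by working inside the Weyl algebra $\A_1$, using the embedding $\A_h \subseteq \A_1$ from Conventions \ref{con:gens}. The key structural fact is that the center of $\A_h$ must be contained in the center of the Weyl field (Corollary \ref{C:Weylfield}), but more usefully, I would argue that $\mathsf Z(\A_h) \subseteq \mathsf Z(\A_1) \cap \A_h$ is \emph{not} quite right---an element central in $\A_h$ need not be central in $\A_1$---so instead I would work directly. Let $z = \sum_{i=0}^n r_i \hat y^i$ with $r_i \in \DD$ and $r_n \neq 0$ be central in $\A_h$. The two conditions to exploit are $[z,x] = 0$ and $[z,\hat y] = 0$ (equivalently, $z$ commutes with $x$ and $y$ after suitable manipulation).

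First I would handle the condition $[z,x]=0$. Since $[\hat y^i, x] = \sum_{j=1}^i \binom{i}{j}\delta^j(x)\hat y^{i-j}$ by \eqref{eq:Ahcom}, and $\delta(x) = h$, expanding $[z,x] = \sum_i r_i [\hat y^i,x] = 0$ and collecting the coefficient of $\hat y^{n-1}$ gives $n\, r_n\, \delta(x) = n\, r_n\, h = 0$. In characteristic zero this forces $n = 0$, so $z = r_0 \in \DD$; then commuting with $\hat y$ gives $r_0' h = \delta(r_0) = 0$, hence $r_0 \in \FF$, proving (1). In characteristic $p > 0$, the vanishing of the top coefficient only forces $p \mid n$, so the argument is more delicate: I would show inductively, descending on the power of $\hat y$, that $[z,x]=0$ forces each $r_i = 0$ unless $p \mid i$, and moreover that $r_i \in \FF[x^p]$. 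This is the routine-but-careful part: the coefficient of $\hat y^{n-2}$ in $[z,x]$ involves $\binom{n}{2}\delta^2(x)r_n + \binom{n-1}{1}\delta(x)r_{n-1}$, and one pushes this down the ladder.

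Next, for part (2), after establishing that any central $z$ lies in the subalgebra generated by $x^p$ and the element $w := h^p y^p$, I would verify the identity \eqref{eq:centgen}. The product formula $h^p y^p = \hat y(\hat y + h')\cdots(\hat y + (p-1)h')$ is exactly \eqref{eqn:h^ky^k} combined with \eqref{eqn:y^kh^k} specialized to $i = p$ (or follows from the fact that $y^p h^p = h^p y^p$ since $[y^p, h^p] = 0$ in $\A_1$, using \eqref{eq:A1com} and $\chara(\FF)=p$). For the final equality $h^p y^p = \hat y^p - \frac{\delta^p(x)}{h}\hat y$: expanding the product $\hat y(\hat y+h')\cdots(\hat y+(p-1)h')$ and using that the elementary symmetric functions of $\{0, h', 2h', \ldots, (p-1)h'\}$ modulo $p$ all vanish except possibly... actually the cleanest route is to observe that $\hat y^p$ and $h^p y^p$ both commute with $x$ (the former by a direct check using $\delta^p$, the latter by Lemma \ref{L:ML:centA1} applied in $\A_1$), so their difference is a polynomial in $\hat y$ of degree $\le 1$ with coefficients in $\DD$; comparing explicitly then pins down the coefficient as $-\delta^p(x)/h$ (noting $h \mid \delta^p(x)$ since $\delta = h\frac{d}{dx}$). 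Finally I would check $w$ is central in $\A_h$: it is central in $\A_1$ by Lemma \ref{L:ML:centA1} since $w = h^p y^p = (hy)^p \cdot(\text{correction})$---more carefully, $w \in \mathsf Z(\A_1)$ because it is a polynomial in $x^p, y^p$---hence a fortiori central in $\A_h$; and $x^p$ is central in $\A_h$ for the same reason. That $\mathsf Z(\A_h) = \FF[x^p, w]$ with these algebraically independent (giving the polynomial algebra claim) follows since $\FF[x^p, w]$ already has GK-dimension $2$ and sits inside $\A_h$ which has GK-dimension $2$, forcing equality once the containment $\mathsf Z(\A_h) \subseteq \FF[x^p, w]$ is established.

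The main obstacle I anticipate is the descending induction in characteristic $p$ showing $\mathsf Z(\A_h) \subseteq \FF[x^p, w]$: the condition $[z,x]=0$ alone does not immediately give membership in the subalgebra generated by $x^p$ and $w$, because $w$ itself is a nontrivial polynomial of degree $p$ in $\hat y$. I would need to argue that after subtracting an appropriate $\FF[x^p]$-linear combination of powers of $w$ from $z$, the $\hat y$-degree strictly drops, and then combine the constraint from $[z,x]=0$ with the constraint from $[z,\hat y]=0$ (which reads $\delta(r_i) = 0$ for the surviving coefficients, i.e.\ $r_i \in \FF[x^p]$ when $\chara \FF = p$, since $\ker\delta = \ker(h\frac{d}{dx}) = \FF[x^p]$ as $h \neq 0$). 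Getting the bookkeeping right so that the induction genuinely terminates and produces a polynomial expression in $x^p$ and $w$ is where the real work lies.
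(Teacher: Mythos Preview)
Your proposal has two genuine problems, one strategic and one technical.

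\textbf{The strategic miss.} You explicitly set aside the containment $\mathsf Z(\A_h)\subseteq \mathsf Z(\A_1)\cap\A_h$ as ``not quite right,'' but in fact it \emph{is} right, and it is the linchpin of the paper's argument. If $z\in\mathsf Z(\A_h)$ then $z$ commutes with $x$ and with $\hat y=yh$; since $h\in\FF[x]\subseteq\A_h$ we have $[h,z]=0$, so $0=[yh,z]=[y,z]h$, and because $\A_1$ is a domain with $h\neq 0$ this forces $[y,z]=0$. Hence $\mathsf Z(\A_h)=\mathsf Z(\A_1)\cap\A_h$. Once you have this, part~(1) is immediate, and for part~(2) you combine Lemma~\ref{L:ML:centA1} (giving $\mathsf Z(\A_1)=\FF[x^p,y^p]$) with Lemma~\ref{lem:A_h-insideA_1} (giving the divisibility condition $h^i\mid r_i$): any $z\in\mathsf Z(\A_h)$ is $\sum_{i\equiv 0\ \mathsf{mod}\ p} r_i y^i$ with $r_i\in\FF[x^p]$ and $h^i\mid r_i$, and since $h^i\in\FF[x^p]$ for such $i$, the quotient $r_i/h^i$ lies in $\FF[x^p]$. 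This bypasses entirely the descending induction on $\hat y$-degree that you flagged as your main obstacle.

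\textbf{The technical error.} In your argument for the identity $h^py^p=\hat y^p-\frac{\delta^p(x)}{h}\hat y$, you claim that $\hat y^p$ commutes with $x$. It does not: by \eqref{eq:Ahcom}, $[\hat y^p,x]=\sum_{j=1}^p\binom{p}{j}\delta^j(x)\hat y^{p-j}=\delta^p(x)$, which is typically nonzero (e.g.\ take $h=x$). So the difference $h^py^p-\hat y^p$ does not obviously land in the centralizer of $x$, and your claim that it has $\hat y$-degree $\le 1$ is unjustified. The paper instead writes $h^py^p=\sum_{n=0}^p f_n\hat y^n$ with $f_p=1$, expands $0=[h^py^p,x]$ via \eqref{eq:Ahcom}, and reads off successively that $f_{p-1}=f_{p-2}=\cdots=f_2=0$, then $f_1=-\delta^p(x)/h$; finally $f_0=0$ comes from the product form $\hat y(\hat y+h')\cdots(\hat y+(p-1)h')\in\hat y\A_h$.
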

\begin{proof}
We first observe that $\mathsf Z(\A_1)\cap \A_h\subseteq \mathsf Z(\A_h)$, as $\A_{h}\subseteq \A_{1}$. Conversely, given $z\in \mathsf Z(\A_h)$, then $[x, z]=0$ and $0=[\hat y, z]=[yh, z]=[y, z]h+y[h, z]=[y, z]h$. Since $h\neq 0$ it follows that $[y, z]=0$ and $z\in\mathsf Z(\A_1)\cap \A_h$. Hence
\begin{equation}\label{E:cent:intersectA1}
\mathsf Z(\A_1)\cap \A_h=\mathsf Z(\A_h).
\end{equation}
If $\chara(\FF) = 0$ then $\mathsf Z(\A_h)=\FF 1$.

Now suppose that $\chara(\FF) = p>0$. Then $x^{p}, h^p y^p \in \mathsf Z(\A_1)\cap \A_{h}$. For every $k\geq 0$, $h^{kp} y^{kp} = (h^p)^k (y^p)^k = (h^py^p)^k$, thus the elements $x^p$ and $h^p y^p$ are algebraically independent, and it follows that $\FF[x^p, h^p y^p]\subseteq \mathsf Z(\A_h)$. Let $z\in \mathsf Z(\A_h)$. By (\ref{E:cent:intersectA1}), Lemma~\ref{lem:A_h-insideA_1},  and Lemma~\ref{L:ML:centA1}, we can write $z = \sum_{i \equiv 0 \modd p} r_i y^i$ with $r_{i}\in\FF[x^{p}]$ such that $h^{i}\, |\, r_{i}$ for all $i \equiv 0 \modd p$. Since $h^{i}\in\FF[x^{p}]$ for $i \equiv 0 \modd p$, there exist $c_{i}\in\FF[x^{p}]$ so that $z = \sum_{i \equiv 0 \modd p} c_i h^{i} y^i\in \FF[x^p, h^p y^p]$, and therefore $\mathsf Z(\A_h)=\FF[x^p, h^p y^p]$.    

The relation 
$h^p y^p = y^p h^p = \hat y ( \hat y+h')( \hat y+2h') \cdots ( \hat y + (p-1)h')$ is just (\ref{eqn:y^kh^k}) with $i = p$.  
To show this expression equals $\hat y^p - \frac{\delta^p(x)}{h} \hat y$,  use Lemma \ref{lem:A_h-insideA_1} to write $h^p y^p = \sum_{n=0}^p f_n \hat y^n$, where $f_n\in \FF[x]$ for all $n$ 
and $f_p = 1$.     Then
 \begin{eqnarray*}  0 &=& { [h^py^p, x]   }
 =  \sum_{n=1}^p f_n [\hat y^n, x]   
 = \sum_{n=1}^p f_n \sum_{j=1}^n {n \choose j} \delta^j(x) \hat y^{n-j} \qquad \ \ \hbox{\rm by \eqref{eq:Ahcom}} \\
   &=& f_p \delta^p(x)  +\sum_{n=1}^{p-1} f_n \sum_{j=1}^n  {n \choose j} \delta^j(x) \hat y^{n-j}   \\
 &=& \delta^p(x) + {{p-1}\choose {1}}f_{p-1} \delta(x)\hat y^{p-2} + \hbox{lower terms}.  \end{eqnarray*}
   
Since $\delta(x) = h \neq 0$, we see that $f_{p-1} = 0$.   Then the above gives 
\begin{equation*} 0 = \delta^p(x) +{ {p-2}\choose 1} f_{p-2} \delta(x) \hat y^{p-3} + \  \hbox{ lower terms}.\end{equation*}
Proceeding in this way,  we obtain $f_n = 0$ for all $n=p-1,p-2, \ldots, 2$.      As a result,  we have
$0 =  \delta^p(x) + f_1 \delta(x)$ or $f_1 =-\frac{\delta^p(x)}{h}$, since $h$ always divides $\delta^k(x)$
for $k \geq 1$.     Consequently,  $h^py^p  = \hat y^p -\frac{\delta^p(x)}{h}\hat y + f_0$.  Then
$$
0=[\hat y, \hat y^p -\textstyle{\frac{\delta^p(x)}{h}}\hat y + f_0]=[\hat y, -\textstyle{\frac{\delta^p(x)}{h}}\hat y]+[\hat y, f_0]= -[\hat y, \textstyle{\frac{\delta^p(x)}{h}}]\hat y+hf'_0,
$$
and it follows that $[\hat y, \frac{\delta^p(x)}{h}]=0$. But then  
$$\hat y^p - \hat y\textstyle{\frac{\delta^p(x)}{h}} + f_0 = \hat y^p -\frac{\delta^p(x)}{h}\hat y + f_0=h^p y^p = \hat y(\hat y+h') \cdots (\hat y + (p-1)h') \in
\hat y \A_h,$$
and hence  $f_0 \in  \hat y \A_h$.   The only way that can happen is if $f_0 = 0$ and 
$h^p y^p = \hat y^p -\textstyle{\frac{\delta^p(x)}{h}}\hat y.$ \end{proof}
 
\begin{exam} Assume $\chara(\FF) = p>0$ and  $h(x) = x^n$ for some $n \geq 1$.  Then it is easy to verify that
$$\delta^p(x) =  \left(\prod_{k=1}^{p-1}k(n-1)+1 \right) x^{np-p+1}.$$
Hence, if $n \not \equiv 1 \modd p$,  we can find $1 \leq  k < p$ with $k(n-1) \equiv -1 \modd p$ so that 
$\delta^p(x) = 0$.       This implies that when $h(x) = x^n$,    
$$\frac{\delta^p(x)}{h} = \begin{cases} 0  & \qquad  \hbox {\rm if \ \ $n \not \equiv 1 \modd p$} \\
x^{(n-1)(p-1)}  & \qquad \hbox {\rm if \ \ $n  \equiv 1 \modd p$}. \end{cases} $$
In particular,  $\centh = \FF[x^p, \hat y^p]$ whenever $h(x) = x^n$ and $n \not \equiv 1 \modd p$.   
When $n = 2$, this was shown by Shirikov in \cite{shirikov07-2}.  \end{exam}    
 
\begin{prop}\label{prop:A_h-freeOverZ}
Assume $\chara(\FF) = p>0$ and regard $\A_h  \subseteq \A_1$ as in Conventions \ref{con:gens}.  Then $\A_h$ is a free module over $\mathsf Z(\A_h)$, and the set $\{ x^i h^{j}y^{j} \mid 0 \le i, j < p \}$ is a basis.
\end{prop}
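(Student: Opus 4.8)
The plan is to refine the known $\DD$-module decomposition $\A_h = \bigoplus_{n\ge 0}\DD h^n y^n$ from Lemma~\ref{lem:A_h-insideA_1} in two stages: first extract a basis of $\DD = \FF[x]$ over $\FF[x^p]$, then absorb the remaining powers of $y$ into the central element $h^p y^p$. Since $\chara(\FF)=p$, the polynomial ring $\DD$ is free over $\FF[x^p]$ on $1, x, \dots, x^{p-1}$, so substituting $\DD = \bigoplus_{0\le i<p}\FF[x^p]x^i$ into Lemma~\ref{lem:A_h-insideA_1} gives the direct sum decomposition
\begin{equation*}
\A_h \;=\; \bigoplus_{n\ge 0}\ \bigoplus_{0\le i<p}\ \FF[x^p]\, x^i h^n y^n ,
\end{equation*}
in which each summand $\FF[x^p]\,x^i h^n y^n$ is free of rank one over $\FF[x^p]$.

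The second stage rests on the identity
\begin{equation*}
x^i h^n y^n \;=\; (h^p y^p)^q\, x^i h^j y^j \qquad (n = qp + j,\ 0\le j<p),
\end{equation*}
which holds in $\A_1$ and hence in the subalgebra $\A_h$. This is exactly where the characteristic $p$ hypothesis is used: $h^p \in \FF[x^p]$ and, by Lemma~\ref{L:ML:centA1}, $y^p$ both lie in $\mathsf{Z}(\A_1)$, so $(h^p y^p)^q = h^{qp}y^{qp}$ is central in $\A_1$ and $y^{qp}$ commutes with $h^j$; expanding $(h^p y^p)^q x^i h^j y^j = x^i h^{qp}\bigl(y^{qp}h^j\bigr)y^j = x^i h^{qp+j}y^{qp+j}$ yields the identity. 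Because $(n,i)\mapsto(q,i,j)$, via division of $n$ by $p$ with remainder, is a bijection, substituting the identity into the previous display re-indexes it as
\begin{equation*}
\A_h \;=\; \bigoplus_{q\ge 0}\ \bigoplus_{0\le i,j<p}\ \FF[x^p]\,(h^p y^p)^q\, x^i h^j y^j .
\end{equation*}

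Finally, by Theorem~\ref{L:center} the center is $\mathsf{Z}(\A_h) = \FF[x^p, h^p y^p]$, and since $x^p$ and $h^p y^p$ are algebraically independent (as observed in the proof of that theorem), $\mathsf{Z}(\A_h)$ is free over $\FF[x^p]$ with basis $\{(h^p y^p)^q \mid q\ge 0\}$. Since $\FF[x^p]$ and $(h^p y^p)^q$ are central, I would now collect the summands of the last display according to the pair $(i,j)$: for each fixed $i,j$ the sum of the corresponding $q$-indexed summands equals $\mathsf{Z}(\A_h)\, x^i h^j y^j$, and it is free of rank one over $\mathsf{Z}(\A_h)$ because the ambient decomposition is direct and each $\FF[x^p]$-summand has rank one. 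Assembling these pieces gives $\A_h = \bigoplus_{0\le i,j<p}\mathsf{Z}(\A_h)\, x^i h^j y^j$, which is the assertion. There is no substantive obstacle here; the only thing requiring care is the bookkeeping — verifying that directness of the module decomposition survives each refinement and re-indexing, with the final regrouping over $q$ relying precisely on the freeness of $\mathsf{Z}(\A_h)$ over $\FF[x^p]$ just noted.
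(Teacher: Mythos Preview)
Your proof is correct and follows essentially the same approach as the paper: both arguments rest on the identity $x^a h^b y^b = (x^p)^{\tilde a}(h^p y^p)^{\tilde b}\, x^i h^j y^j$ (with $a=\tilde a p+i$, $b=\tilde b p+j$) together with the freeness of $\FF[x]$ over $\FF[x^p]$ and the description $\mathsf Z(\A_h)=\FF[x^p,h^py^p]$. The only organizational difference is that you carry a single direct-sum decomposition through successive refinements, whereas the paper separates the argument into an independence step (grading by $y$-degree mod $p$) and a spanning step; the content is the same.
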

\begin{proof}
Suppose that 
\begin{equation}\label{eqn:freeBasis1}
0 = \sum_{0 \le i,j < p} c_{i,j} x^i h^j y^j,
\end{equation}
where $c_{i,j} \in \mathsf Z(\A_h) = \FF[x^p, h^py^p]$.  For $0 \le j < p$,
\begin{equation*}
\sum_{0 \le i < p} c_{i,j} x^i h^j y^j\in\bigoplus_{k\equiv j \modd p}\DD y^{k}.
\end{equation*}
Thus, (\ref{eqn:freeBasis1}) and Theorem~\ref{thm:basicFactsOre} imply that $\sum_{0 \le i < p} c_{i,j} x^i h^j y^j=0$. As $h\neq 0$, it follows that $\sum_{0 \le i < p} c_{i,j} x^i=0$ for every $0 \le j < p$.  The direct sum decomposition $\FF[x, h^p y^p]=\bigoplus_{i=0}^{p-1}\FF[x^{p}, h^p y^p]x^{i}$  then implies $c_{i,j} = 0$ for all $i, j$.

It remains to show that $\{ x^i h^{j}y^{j} \mid 0 \le i, j < p \}$ generates $\A_h$ over $\mathsf Z(\A_h)$. Let $a, b\geq 0$ and write 
\begin{equation*}
a=\tilde{a}p+i, \hspace{.5in} b=\tilde{b}p+j,
\end{equation*}
for some  nonnegative integers $\tilde{a}$, $\tilde{b}$ and  $0 \le i, j < p$. Then,
\begin{equation*}
x^a h^{b}y^{b}=\left(x^p\right)^{\tilde{a}} \left(h^{p}y^{p}\right)^{\tilde{b}}x^i h^{j}y^{j}\in \mathsf Z(\A_h) x^i h^{j}y^{j}.
\end{equation*}
As $\{ x^a h^{b}y^{b} \mid a, b\geq 0 \}$ is a basis for $\A_{h}$, by Lemma~\ref{lem:A_h-insideA_1} the result is established.  \end{proof}

\begin{remark}\hfill
\begin{enumerate}
\item[{\rm (i)}]   The algebra anti-automorphism $x\mapsto x$, $y\mapsto -y$ of $\A_{1}$ can be applied to the
basis above to show that $\{ y^{j}h^{j}x^i \mid 0 \le i, j < p \}$ is a basis for $\A_{h}$ over $\mathsf Z(\A_h)$.

\item[{\rm (ii)}]  A standard inductive argument can be used to prove that $\{ x^i y^{j}h^{j} \mid 0 \le i, j < p \}$ is also a basis for $\A_h$ over $\mathsf Z(\A_h).$
\end{enumerate}
\end{remark} 
\begin{section}{The Lie Ideal $[ \A_h, \A_h ]$} \end{section}

\begin{lemma} \label{lem:[A_h,A_h]-sub-hA_h}
Let $h \in \FF[x]$.  Then $[ \A_h, \A_h ] \subseteq h \A_h$.
\end{lemma}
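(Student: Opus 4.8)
The plan is to show that the bracket of any two basis elements $x^a\hat y^b$ and $x^c\hat y^d$ lands in $h\A_h$, since these span $\A_h$ and the bracket is bilinear. Because $h\A_h$ is a two-sided ideal of $\A_h$ (as $h$ is central in $\DD=\FF[x]$ and $\hat y h = h\hat y + h'h \in h\A_h$, so $h\A_h$ absorbs both left and right multiplication), it suffices to work modulo $h\A_h$. The key observation is that in the quotient $\A_h/h\A_h$ the relation $[\hat y,x]=h$ becomes $[\hat y,x]\equiv 0$, so $\A_h/h\A_h$ is a commutative algebra — indeed it is a quotient of the polynomial algebra $\FF[x]\otimes\FF[\hat y]$. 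Hence every commutator $[u,v]$ with $u,v\in\A_h$ reduces to $0$ mod $h\A_h$, which is exactly the assertion $[\A_h,\A_h]\subseteq h\A_h$.

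Concretely, I would argue as follows. First establish that $h\A_h$ is a two-sided ideal: left-absorption is immediate, and for right-absorption one checks $f h \cdot \hat y = \hat y \cdot fh - \delta(fh) = \hat y\cdot fh - (fh)'h \in \A_h h = h\A_h$ using that $\A_h = \bigoplus_i \hat y^i\DD$ and $h$ commutes with everything in $\DD$; iterating handles right-multiplication by $\hat y^i$, and right-multiplication by elements of $\DD$ is trivial since $\DD$ is commutative and $h\in\DD$. Alternatively, and more cleanly, one can invoke Lemma~\ref{lem:A_h-insideA_1}: $h\A_h = \bigoplus_{i\ge 0}\DD h^{i+1}y^i$ viewed inside $\A_1$, and this is visibly stable under the needed multiplications. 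Then the quotient map $\A_h \to \A_h/h\A_h$ sends $x,\hat y$ to commuting generators, so the quotient is commutative, forcing $[\A_h,\A_h]\subseteq h\A_h$.

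If one prefers to avoid the quotient language, the same content can be packaged computationally: using the Leibniz-type identity \eqref{eq:Ahcom}, $[\hat y^n,f] = \sum_{j\ge 1}\binom{n}{j}\delta^j(f)\hat y^{n-j}$, and every $\delta^j(f) = $ (polynomial)$\cdot h$ for $j\ge 1$ since $\delta(f)=hf'$ and $\delta$ maps $\DD$ into $h\DD\subseteq h\A_h$ (and $h\A_h$ is an ideal, so $\delta^j(\DD)\subseteq h\A_h$ for $j\ge1$); thus $[\hat y^n,f]\in h\A_h$. Combining this with the obvious fact that $[x^a\hat y^b,x^c\hat y^d]$ expands into a sum of terms each containing a factor $[\hat y^{b},x^{c}]$-type commutator (after moving powers of $x$ and $\hat y$ past each other, every mismatch produces such a commutator), one gets the inclusion.

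The main obstacle is simply being careful that $h\A_h$ really is a two-sided ideal and that the reduction of commutators is complete — i.e.\ that after accounting for the leading "commuting" part, every correction term genuinely carries a factor of $h$. The quotient-algebra viewpoint makes this transparent and is the route I would actually write up: identify $\A_h/h\A_h$ as a commutative algebra, and conclude immediately. No deep input is needed beyond Theorem~\ref{thm:basicFactsOre}, the PBW-type basis, and (optionally) Lemma~\ref{lem:A_h-insideA_1}.
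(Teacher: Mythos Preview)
Your proposal is correct, and in fact contains two valid proofs. Your secondary ``computational'' route is essentially the paper's own argument: the paper reduces the general commutator via the Leibniz identity
\[
[a\hat y^\ell, b\hat y^m] = a[\hat y^\ell, b]\hat y^m - b[\hat y^m, a]\hat y^\ell,
\]
and then invokes \eqref{eq:Ahcom} together with $\delta^j(f)\in h\DD$ for $j\ge 1$ to conclude $[\hat y^n,f]\in h\A_h$. Your reduction is phrased more loosely (``expands into a sum of terms each containing a $[\hat y^b,x^c]$-type commutator''), but the content is the same, and the paper's Leibniz-style identity is the clean way to make that precise.

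Your preferred quotient-algebra approach is a genuinely different packaging. It trades the explicit commutator computation for the (logically equivalent) step of showing that $h\A_h$ is a two-sided ideal, after which commutativity of $\A_h/h\A_h$ is immediate from the defining relation. This is conceptually cleaner and makes the reason for the inclusion transparent. The paper actually proves normality of $h$ later (Lemma~\ref{lem:h-factorsNormal}), so your argument anticipates that result; your sketch of why $h\A_h$ is two-sided (via $\hat y h = h(\hat y + h')$) is exactly the computation done there. One small remark: your justification ``$h\A_h$ is an ideal, so $\delta^j(\DD)\subseteq h\A_h$'' is unnecessary---the containment $\delta^j(\DD)\subseteq h\DD$ follows directly by induction from $\delta(g)=hg'$, with no appeal to the ideal property.
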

\begin{proof}
Recall that $\A_h$ is spanned by elements of the form $a\hat y^\ell$ for $\ell\geq 0$ and $a\in\DD$. Thus it suffices to show that $[a \hat y^\ell , b \hat y^m] \in h\A_h$ for all $\ell, m \ge 0$ and $a, b\in\DD$. Observe that
$$
[a \hat y^\ell , b \hat y^m]=[a \hat y^\ell, b]\hat y^m+b[a \hat y^\ell, \hat y^m]=a[\hat y^\ell, b]\hat y^m-b[\hat y^m, a]\hat y^\ell,
$$ 
so it is enough to show that $[\hat y^n, f]\in h \A_h$ for all $n\geq 0$ and $f\in\DD$. This follows directly from \eqref{eq:Ahcom} as $\delta^{j}(f)\in h\DD$ for all $j\geq 1$.\end{proof}

We have the following simple description of $[\A_h, \A_h]$ for fields of characteristic 0. 

\begin{prop} \label{prop:char0-commutator}  Suppose that ${\rm char} (\FF) = 0$.  Then $h \A_h 
= [x,\A_h] = [\hat y, \A_h ] = [ \A_h, \A_h]$.
\end{prop}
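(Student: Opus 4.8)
The chain of inclusions $[\A_h,\A_h] \supseteq [\hat y,\A_h]$ and $[\A_h,\A_h] \supseteq [x,\A_h]$ is automatic, and Lemma~\ref{lem:[A_h,A_h]-sub-hA_h} gives $[\A_h,\A_h] \subseteq h\A_h$. So it suffices to prove the two reverse inclusions $h\A_h \subseteq [x,\A_h]$ and $h\A_h \subseteq [\hat y,\A_h]$; then everything is squeezed between $h\A_h$ and $h\A_h$ and all four sets coincide. I would treat these by exhibiting, for an arbitrary basis element $h\cdot x^a \hat y^b$ of $h\A_h$ (using the basis $\{x^a\hat y^b\}$ of $\A_h$ over $\FF$), an explicit preimage under $\ad_x$ and under $\ad_{\hat y}$.

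For the inclusion $h\A_h \subseteq [\hat y,\A_h]$: from \eqref{eq:Ahcom} with $n=1$ we have $[\hat y, f] = \delta(f) = hf'$ for $f \in \DD$, and more generally $[\hat y, f\hat y^b] = hf'\hat y^b$. Since $\chara(\FF)=0$, every polynomial $f\in\DD=\FF[x]$ is a derivative: given $g \in \DD$ pick $f$ with $f' = g$, so $[\hat y, f\hat y^b] = hg\hat y^b$. As $b$ and $g$ range freely this shows every element $hg\hat y^b$ lies in $[\hat y,\A_h]$, hence $h\A_h \subseteq [\hat y,\A_h]$ by linearity.

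For the inclusion $h\A_h \subseteq [x,\A_h]$: here I compute $[x,\hat y^b]$. Using that $[\hat y, x] = h$ — equivalently $[x,\hat y] = -h$ — and the Leibniz rule, $[x,\hat y^b] = \sum_{k=0}^{b-1}\hat y^k[x,\hat y]\hat y^{b-1-k}$; but $[x,\hat y] = -h$ need not be central, so one must move it past the $\hat y^k$ using \eqref{eqn:(yHat)(h)}, which shows $\hat y^k h = h(\hat y + kh')\cdots$; more cleanly, $[x,\hat y^b]$ equals $-h$ times a polynomial in $\hat y$ of degree $b-1$ with leading coefficient $-b$ (up to sign), i.e. $[x,\hat y^b] = -b\,h\,\hat y^{b-1} + (\text{lower order in }\hat y,\ \text{each term in }h\A_h)$. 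The point of characteristic zero is that $-b \neq 0$ for $b\geq 1$, so this lets me solve triangularly: I can produce $h\hat y^{b-1}$ modulo lower-degree elements of $h\A_h$, hence by downward induction on $\hat y$-degree produce every $h\hat y^{b}$, and then multiplying on the left by arbitrary $x^a \in \DD$ (note $x^a[x,\hat y^b] = [x, x^a\hat y^b]$) gives all of $h\A_h$.

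The main obstacle is the second inclusion: unlike $\ad_{\hat y}$, which acts on the $\DD$-coefficient by the single derivation $\delta$ and is transparent, $\ad_x$ lowers $\hat y$-degree and the non-commutativity of $h$ with $\hat y$ means one cannot simply read off a preimage — the triangular/inductive bookkeeping in the $\hat y$-degree (keeping careful track that all correction terms genuinely lie in $h\A_h$, via Lemma~\ref{lem:[A_h,A_h]-sub-hA_h} or \eqref{eq:Ahcom}) is where the real work sits, and it is exactly there that $\chara(\FF)=0$ is used, through invertibility of the integers $b$.
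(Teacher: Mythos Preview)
Your proposal is correct. The overall reduction (via Lemma~\ref{lem:[A_h,A_h]-sub-hA_h}) and the argument for $h\A_h \subseteq [\hat y,\A_h]$ are exactly what the paper does. For $h\A_h \subseteq [x,\A_h]$, however, the paper takes a different and shorter route: it works inside $\A_1$ using the basis of Lemma~\ref{lem:A_h-insideA_1}, namely $\A_h = \bigoplus_{i\ge 0}\DD h^i y^i$, and computes in one stroke
\[
\Bigl[\tfrac{1}{i+1}\,g\,h^{i+1}y^{i+1},\,x\Bigr] \;=\; \tfrac{1}{i+1}\,g\,h^{i+1}[y^{i+1},x] \;=\; h\cdot g\,h^{i}y^{i},
\]
which immediately exhibits a preimage for every basis element of $h\A_h$ with no induction and no lower-order correction terms. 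Your approach stays with the $\hat y$-basis and uses \eqref{eq:Ahcom} to compute $[x,\hat y^b] = -b\,h\,\hat y^{b-1} + (\text{lower-degree terms in }h\A_h)$, then solves triangularly by induction on the $\hat y$-degree (upward, really: assume degrees $<b$ are handled, then use $[x,g\hat y^{b+1}]$ to capture $hg\hat y^{b}$). This is perfectly valid and has the advantage of being intrinsic to $\A_h$, whereas the paper's argument is cleaner precisely because passing to $\A_1$ replaces $\hat y$ by $y$, for which $[y^{i+1},x] = (i+1)y^i$ has no lower-order tail.
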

\begin{proof}
By Lemma \ref{lem:[A_h,A_h]-sub-hA_h}, it suffices to prove that $h \A_h \subseteq  [ \hat y, \A_h ]$.  Note that $h \A_h = h \left(\bigoplus_{i \ge 0} \DD \hat y^i\right)$, and by the linearity of the adjoint map $\ad_{\hat y}$ (where $\ad_{\hat y}(a) = [\hat y,a]$), it is enough to show that $h g \hat y^i \in [\hat y, \A_h]$ for every $i \ge 0$ and $g \in \DD$.   Since ${\rm char}(\FF) = 0$, the element $g \in \DD$ has the form $f'$ for some $f \in \DD$, and therefore 
$$[\hat y, f\hat y^i] = [\hat y,f] \hat y^i = h f' \hat y^i = hg \hat y^i.$$ 
It remains to show that $h \A_h \subseteq  [ x, \A_h ]$. It will be more convenient to work inside  $\A_{1}$,  where  $h \A_h = h \left(\bigoplus_{i \ge 0} \DD h^i y^i\right)$. Then, for $i\geq 0$ and $g\in \DD$ we have $\frac{1}{i+1}gh^{i+1}y^{i+1}\in\A_{h}$ and 
$$
\left[ \textstyle{\frac{1}{i+1}}gh^{i+1}y^{i+1}, x\right]=\textstyle{\frac{1}{i+1}}gh^{i+1}[y^{i+1}, x]=hgh^{i}y^{i}.
$$
The linearity of $\ad_{x}$ implies that $h \A_h \subseteq  [\A_{h}, x] = [ x, \A_h ]$.
\end{proof}
   
In the next result,  we determine the {\it centralizer}  $\mathsf{C}_{\A_h}(x) = \{a \in \A_h \mid [a,x] = 0\}$ of
$x$ in $\A_h$ and then use that to describe the commutator $[\A_h, \A_h]$ when 
$\chara(\FF) = p > 0$.   

\begin{lemma}\label{L:commcent} Regard $\A_h\subseteq \A_1$  as in Conventions \ref{con:gens}.  
\begin{itemize}  \item[{\rm (i)}]  If  $\chara(\FF) = 0$,  then $\mathsf{C}_{\A_h}(x) = \DD = \FF[x].$ 
\item[{\rm (ii)}]  If $\chara(\FF) = p > 0$, then the following hold:
 \begin{enumerate}\item [\rm{(a)}]  $\mathsf{C}_{\A_h}(x) =  \displaystyle \FF[x, h^p y^p]=\bigoplus_{i \equiv 0 \modd p} \DD h^i y^i$.
\item[\rm{(b)}] $\displaystyle [x, \A_{h}] = \bigoplus_{i \not\equiv -1 \modd p} h\DD h^i y^i =  \bigoplus_{i=0}^{p-2}h \mathsf{C}_{\A_h}(x) h^i y^i$.
\item[\rm{(c)}] $[\hat y, \A_{h}] =  {\displaystyle \bigoplus_{i \ge 0}} \, \im \left( \frac{d}{dx} \right) h \hat y^i =  \displaystyle \bigoplus_{j \not\equiv -1\modd p}hx^{j}\FF[\hat y]$.
\end{enumerate}
\end{itemize}
\end{lemma}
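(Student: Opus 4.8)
The plan is to work throughout inside $\A_1$, using the basis decomposition $\A_h = \bigoplus_{i\ge 0}\DD h^iy^i$ from Lemma \ref{lem:A_h-insideA_1} and the commutator formulas \eqref{eq:Ahcom}, \eqref{eq:A1com} from Lemma \ref{lem:identity_y^nf}. For part (i), take $a=\sum_i r_i\hat y^i\in\mathsf C_{\A_h}(x)$ with $r_i\in\DD$, expand $[a,x]=\sum_i r_i[\hat y^i,x]=\sum_i r_i\sum_{j\ge 1}\binom{i}{j}\delta^j(x)\hat y^{i-j}$ by \eqref{eq:Ahcom}, and read off the top-degree term in $\hat y$: if $i$ is the largest index with $r_i\neq 0$ and $i\ge 1$, the coefficient of $\hat y^{i-1}$ is $i\,r_i\delta(x)=i\,r_ih\ne 0$ in characteristic $0$, a contradiction. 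Hence $a=r_0\in\DD$, and conversely $\DD$ centralizes $x$. This also re-proves (ii)(a) up to the characteristic-$p$ subtlety: the same leading-term analysis shows the obstruction vanishes exactly when every index $i$ with $r_i\ne 0$ satisfies $\binom{i}{j}\delta^j(x)=0$ in $\DD$ for all $j\ge1$; I will instead deduce (ii)(a) more cleanly from the center computation. Since $x\in\mathsf Z(\A_1)$ when $\chara(\FF)=p$ forces nothing, the right statement is $\mathsf C_{\A_h}(x)=\mathsf C_{\A_1}(x)\cap\A_h$: indeed if $[a,x]=0$ with $a\in\A_h\subseteq\A_1$ then $a\in\mathsf C_{\A_1}(x)$, and $\mathsf C_{\A_1}(x)=\FF[x,y^p]$ by Lemma \ref{L:ML:centA1} (the centralizer of $x$ in $\A_1$ is generated by $x$ and $y^p$, which follows from \eqref{eq:A1com} since $[y^n,f]$ has $y$-degree $n-1$ term $n f'y^{n-1}$, zero iff $p\mid n$ when $f=x$). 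Intersecting $\FF[x,y^p]$ with $\A_h=\bigoplus_i\DD h^iy^i$ and using that $h^{kp}y^{kp}=(h^py^p)^k$ while $y^{kp}\notin\A_h$ for $k\ge1$ unless multiplied by $h^{kp}$, gives $\mathsf C_{\A_h}(x)=\FF[x,h^py^p]=\bigoplus_{i\equiv 0\,(p)}\DD h^iy^i$.

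For (ii)(b): by Lemma \ref{lem:[A_h,A_h]-sub-hA_h} (and its proof) $[x,\A_h]\subseteq h\A_h=\bigoplus_i h\DD h^iy^i$. Conversely, working in $\A_1$, for $g\in\DD$ and $i\ge0$ with $i\not\equiv -1\pmod p$ we have $i+1\neq 0$ in $\FF$, so $\frac{1}{i+1}gh^{i+1}y^{i+1}\in\A_h$ and $[\,\frac{1}{i+1}gh^{i+1}y^{i+1},x\,]=\frac{1}{i+1}gh^{i+1}[y^{i+1},x]=hgh^iy^i$ by \eqref{eq:A1com} (since $[y^{i+1},x]=(i+1)y^i$). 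This shows $h\DD h^iy^i\subseteq[x,\A_h]$ whenever $i\not\equiv-1\pmod p$. To see these are the only components that occur, take any $a=\sum_i r_ih^iy^i\in\A_h$ and compute $[a,x]=\sum_i r_ih^i[y^i,x]=\sum_i i\,r_ih^iy^{i-1}$; the coefficient of $h^{i-1}y^{i-1}$ in $[a,x]$ (after rewriting $h^i=h\cdot h^{i-1}$) is $i\,r_ih$, which lies in $h\DD$ and is zero whenever $i\equiv0\pmod p$, so the $h^{i-1}y^{i-1}$-slot of $[x,\A_h]$ for $i-1\equiv -1\pmod p$ is $0$ — i.e. no $h\DD h^jy^j$ with $j\equiv-1\pmod p$ appears. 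Hence $[x,\A_h]=\bigoplus_{i\not\equiv -1\,(p)}h\DD h^iy^i$, and the second equality follows by regrouping: writing $i=kp+\ell$ with $0\le\ell\le p-2$ identifies $\bigoplus_{k\ge0}h\DD h^{kp}y^{kp}h^\ell y^\ell=h\,\FF[x,h^py^p]\,h^\ell y^\ell=h\,\mathsf C_{\A_h}(x)h^\ell y^\ell$ using (ii)(a), noting $h^{kp}y^{kp}=(h^py^p)^k$ is central so it commutes past $\DD$.

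For (ii)(c): by \eqref{eq:Ahcom}, $[\hat y,f\hat y^i]=[\hat y,f]\hat y^i=hf'\hat y^i$ for $f\in\DD$, and by linearity $[\hat y,\A_h]=\sum_{i\ge0}h\cdot\im(\tfrac{d}{dx})\cdot\hat y^i$; moreover $[\hat y,g\hat y^i]=hg'\hat y^i$ shows no other terms arise, and the image of $\tfrac{d}{dx}$ on $\DD$ in characteristic $p$ is exactly $\bigoplus_{j\not\equiv -1\,(p)}\FF x^j$ since $(x^{j+1})'=(j+1)x^j$ vanishes precisely when $j\equiv-1\pmod p$. So $[\hat y,\A_h]=\bigoplus_{i\ge0}\im(\tfrac{d}{dx})h\hat y^i=\bigoplus_{j\not\equiv-1\,(p)}hx^j\FF[\hat y]$ (the sum over $i$ repackages as $\FF[\hat y]$ in the $\hat y$-variable). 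The main obstacle in all three parts is bookkeeping: one must consistently pass between the $\{\hat y^i\}$-basis and the $\{h^iy^i\}$-basis of $\A_h$, track which residue classes mod $p$ survive differentiation or the maps $y^i\mapsto iy^{i-1}$, and verify that central factors like $h^py^p$ genuinely commute past polynomial coefficients so the regroupings into $\mathsf C_{\A_h}(x)$-modules are valid; each individual step is a short computation, but the indexing by $i\bmod p$ must be handled with care to avoid off-by-one errors in the exceptional class $i\equiv -1$.
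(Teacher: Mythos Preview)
Your proposal is correct and follows essentially the same approach as the paper: computing $\mathsf C_{\A_h}(x)=\mathsf C_{\A_1}(x)\cap\A_h$ via $[y^i,x]=iy^{i-1}$, then reading off $[x,\A_h]$ from the $\DD h^iy^i$-basis and $[\hat y,\A_h]$ from the $\DD\hat y^i$-basis. The only cosmetic difference is that in part~(i) you first try an intrinsic argument in $\A_h$ using \eqref{eq:Ahcom} (which works fine since the leading $\hat y^{i-1}$-coefficient is $i\,r_ih$), whereas the paper passes directly to $\A_1$ and the simpler identity $[y^n,x]=ny^{n-1}$; note also that your reference to Lemma~\ref{L:ML:centA1} for $\mathsf C_{\A_1}(x)=\FF[x,y^p]$ is a slight misattribution---that lemma gives the center, not the centralizer of $x$---but your parenthetical derivation from \eqref{eq:A1com} is exactly what the paper does.
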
  

\begin{proof}   We first determine the centralizer $\mathsf C_{\A_1}(x)$.  
Suppose $a = \sum_{i=0}^n r_i y^i \in \mathsf{C}_{\A_1}(x)$, where $r_i \in \DD$ for all $i$.   Then
$0 = [a,x] = \sum_{i=1}^n i r_i y^{i-1}$.   When $\chara(\FF) = 0$, this forces $r_i = 0$ for all $i \geq 1$,
so that $a = r_0 \in \DD$.  Since $\DD \subseteq  \mathsf{C}_{\A_1}(x)$ is clear, we have $  \mathsf{C}_{\A_1}(x) =\DD$.  But then  $\mathsf{C}_{\A_h}(x) =  \mathsf{C}_{\A_1}(x) \cap \A_h = \DD$ to
give (i). 
When $\chara(\FF) = p > 0$, we deduce from this calculation that $r_i = 0$ for all $i \not \equiv 0 \modd p$.
Then $a = \sum_{i \equiv 0 \modd p}  r_i y^i \in   \FF[x, y^p]$, so $ \mathsf{C}_{\A_1}(x) \subseteq \FF[x, y^p]$. 
The reverse containment $\FF[x,y^p] \subseteq \mathsf{C}_{\A_1}(x)$ holds trivially, so $\mathsf{C}_{\A_1}(x) = \FF[x, y^p]$ (compare \cite[Proof of Prop.~1]{KA11}).
Now since $\mathsf C_{\A_1}(x) = \bigoplus_{i \equiv 0 \modd p} \DD y^i$, it follows that 
$$\mathsf C_{\A_h}(x) = \mathsf C_{\A_1}(x) \cap \A_h = \bigg \{ \sum_{i \equiv 0 \modd p} r_i y^i \, \bigg | \, r_i \in \DD h^i \bigg \}.$$  This establishes (a) of part (ii).  

(b) \, To describe $[x, \A_h] = [ \A_h, x]$ when $\chara(\FF) = p > 0$, note that for $a = \sum_{i \ge 0} r_i h^i y^i \in \A_h$, we can compute in $\A_1$ that 
$$[a, x] = \sum_{i \ge 0}  [r_i h^i y^i,  x] = \sum_{i \ge 0}  r_i h^i  [y^i,  x] = \sum_{i \not\equiv 0 \modd p}  i r_i h^i y^{i-1} = \sum_{i \not\equiv 0 \modd p} i h r_i h^{i-1} y^{i-1}.$$
Since $i \neq 0$ in $\FF$ as long as $i \not\equiv 0 \modd p$, we see that $\im ( \ad_x )$ is $\sum_{i \not\equiv -1 \modd p} h\DD h^i y^i$, and this sum is evidently direct.  The fact that 
$$\bigoplus_{i \not\equiv -1 \modd p} h \DD h^i y^i =  \bigoplus_{i=0}^{p-2}h\mathsf{C}_{\A_h}(x) h^i y^i$$
follows since $\mathsf C_{\A_h}(x) = \FF [x, h^py^p]$.

(c) \, For $a = \sum_{i \ge 0} r_i \hat y^i \in \A_h$, we have 
$$[\hat y, a] = \sum_{i \ge 0} [\hat y, r_i] \hat y^i = \sum_{i \ge 0} h r_i' \hat y^i,$$
and thus $\im (\ad_{\hat y} ) = \bigoplus_{i \ge 0} \im \left( \frac{d}{dx} \right) h \,  \hat y^i$.  Since $\im \left( \frac{d}{dx} \right) = \bigoplus_{j \not\equiv -1 \modd p} \FF x^j$, it follows that $\im (\ad_{\hat y} ) = \bigoplus_{j \not\equiv -1\modd p}hx^{j}\FF[\hat y]$.
\end{proof}

\begin{section}{The Normal Elements and Prime Ideals of $\A_h$}  \end{section}
 
Recall that an element $v\in \A_h$ is \emph{normal} if $v \A_h = \A_h v$.   In the polynomial algebra $\A_0 = \FF[x,y]$ every element of $\A_0$ is normal.   Similarly, the normal elements of the Weyl algebra $\A_1$ are precisely the central elements (compare Theorem~\ref{T:normalels}).  In general, for $h\notin\FF$, there are non-central normal elements in $\A_{h}$.  In this section, we determine the normal elements of $\A_h$ for arbitrary $h\neq 0$.   
Our starting point is 
\medskip  

\begin{lemma}\label{lem:h-factorsNormal}
Let $g$ be a factor of $h$ in $\DD=\FF[x]$.  Then $g$ is a normal element of $\A_h$.
\end{lemma}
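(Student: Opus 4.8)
The plan is to show that $g\A_h \subseteq \A_h g$ and $\A_h g \subseteq g\A_h$, and since $\A_h$ is generated as an algebra by $x$ and $\hat y$, it suffices to check that $xg$, $gx$, $\hat y g$, and $g\hat y$ each lie in both $g\A_h$ and $\A_h g$. The element $x$ commutes with $g$ (both lie in the commutative subalgebra $\DD = \FF[x]$), so $xg = gx$ causes no difficulty. The entire content is therefore in moving $g$ past $\hat y$.

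First I would record the basic commutation identity. Working inside $\A_h$ (or inside $\A_1$ via Conventions~\ref{con:gens}), for any $f \in \DD$ we have $[\hat y, f] = \delta(f) = hf'$, where $\delta$ is the derivation with $\delta(f) = hf'$. Applying this with $f = g$ gives $\hat y g = g\hat y + hg'$. Now the hypothesis that $g \mid h$ enters: write $h = gr$ for some $r \in \DD$. Then $hg' = g r g'$, so
\begin{equation*}
\hat y g = g\hat y + grg' = g(\hat y + rg'),
\end{equation*}
which visibly lies in $g\A_h$. Since $r, g' \in \DD \subseteq \A_h$, this shows $\hat y g \in g \A_h$, and as $g \hat y \in g\A_h$ trivially, we get $g\A_h \subseteq \A_h g$ is not yet what this gives — rather it shows $\A_h g \subseteq g \A_h$: indeed the left ideal generated by $g$ is spanned by elements $a g$ with $a$ a product of generators, and the identity lets us push every occurrence of $g$ to the left past any $\hat y$, at the cost of modifying the coefficient, landing in $g\A_h$.

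For the reverse inclusion $g\A_h \subseteq \A_h g$, I would symmetrically rewrite $g\hat y = \hat y g - hg' = \hat y g - grg'$. Here the term $-grg' = -rg'\cdot g + [\,\text{correction}\,]$ — one must be slightly careful since $grg'$ need not equal $rg'g$ as written, but in fact $g, r, g'$ all lie in the commutative algebra $\DD$, so $grg' = rg'g \in \A_h g$. Hence $g\hat y = \hat y g - rg' g \in \A_h g$, and again pushing $g$ rightward past every generator in any monomial $g a$ shows $g\A_h \subseteq \A_h g$. Combining the two inclusions yields $g\A_h = \A_h g$, i.e.\ $g$ is normal.

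I do not anticipate a serious obstacle here: the only subtlety is bookkeeping — making sure that when $g$ is pushed past a power $\hat y^\ell$ the repeated corrections stay inside $\DD$ and hence commute with $g$, so that the ideal memberships are genuinely two-sided. One clean way to organize this is to invoke \eqref{eq:Ahcom}: for $f \in \DD$, $[\hat y^n, f] = \sum_{j\ge 1}\binom{n}{j}\delta^j(f)\hat y^{n-j}$, and since $g \mid h \mid \delta^j(g)$ for all $j \ge 1$ (each $\delta^j(g) \in h\DD \subseteq g\DD$), every term $\delta^j(g)\hat y^{n-j}$ is a left multiple of $g$; this directly gives $\hat y^n g \in g\A_h$ for all $n$, hence $\A_h g \subseteq g\A_h$, and the mirror computation (or the anti-automorphism $x \mapsto x$, $\hat y \mapsto -\hat y + h'$ of $\A_h$ from Lemma~\ref{lem:A_h-insideA_1}, which fixes $\DD$ and hence $g$) gives the opposite inclusion.
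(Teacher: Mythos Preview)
Your argument is correct and is essentially the paper's own proof: writing $h = gr$, you compute $\hat y g = g(\hat y + rg') \in g\A_h$ and $g\hat y = (\hat y - rg')g \in \A_h g$, then use that $\A_h = \bigoplus_{i\ge 0}\DD\hat y^i$ (with $\DD$ commuting with $g$) to propagate these to all of $\A_h$. The extra remarks about invoking \eqref{eq:Ahcom} or the anti-automorphism are unnecessary but harmless.
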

\begin{proof}
Write $h = gf$ for $f \in \DD$.  Then 
$$\hat yg = g\hat y + hg' = g\hat y + gfg' = g(\hat y + fg') \in g \A_h$$ 
and $g \hat y =  (\hat y - fg')g \in \A_h g$.    
As $\A_h = \bigoplus_{i \ge 0}\DD \hat y^i$, it follows that $\A_h g \subseteq g \A_h$ and $g \A_h \subseteq \A_h g$, and so $g \A_h = \A_h g$.
\end{proof}
Since the product of two normal elements is normal, it is clear at this stage that products of powers of the prime factors of $h$ are normal elements of $\A_h$.   \medskip

Suppose
\begin{equation}\label{eq:hfactor} h =  \lambda \pr_{1}^{\alpha_{1}}\cdots \pr_{t}^{\alpha_{t}},\end{equation}  where $\lambda\in\FF^{*}$,  $\alpha_{i}\geq 1$ for all $i$, and the  $\pr_{i}\in \FF[x]$ are distinct monic prime
polynomials.  We can assume that the factors have been ordered so that the first ones $\pr_i$, for  $i \leq \ell \leq t$,  are the non-central prime divisors of $h$.   Our aim is to establish the following which generalizes (and includes) the result for the Weyl algebra.    

\begin{thm}\label{T:normalels}
Let $\pr_1, \ldots, \pr_\ell$ be the distinct  monic prime factors of $h$ in $\DD =\FF[x]$ that are not central in $\A_h$.  Then the normal elements of $\A_h$ are the elements of the form $\pr_{1}^{\beta_{1}} \cdots \pr_{\ell}^{\beta_{\ell}}z$,  where  $z\in \centh$.  If $\chara(\FF)=p> 0$, then the $\beta_i$ may be taken so that  $0\leq \beta_{i} < p$ for all $i$.
\end{thm}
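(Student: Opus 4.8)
The plan is to prove the two inclusions separately. One direction is immediate: each $\pr_i$ divides $h$, so $\pr_i$ is normal by Lemma~\ref{lem:h-factorsNormal}; every $z\in\centh$ is normal by definition of the centre; and a product of normal elements is normal. Hence every element $\pr_1^{\beta_1}\cdots\pr_\ell^{\beta_\ell}z$ with $z\in\centh$ is normal, and the real work is in the converse.

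So let $v\in\A_h$ be normal and nonzero. The first step is to pin down how conjugation by $v$ acts on the generators. Since $v\A_h=\A_h v$, there are $a,b\in\A_h$ with $vx=av$ and $v\hat y=bv$, and $a,b$ are unique because $\A_h$ is a domain. I would use the filtration of $\A_h$ by $\hat y$-degree, whose associated graded ring is the commutative polynomial ring $\FF[x,\overline{\hat y}]$ (a domain), so that leading terms are multiplicative: comparing leading terms in $vx=av$ forces $a=x$, and comparing leading terms in $v\hat y=bv$ forces $b=\hat y-g$ for some $g\in\DD$. In particular $[x,v]=0$, so $v$ lies in $\mathsf{C}_{\A_h}(x)$, which by Lemma~\ref{L:commcent} is $\FF[x]$ when $\chara(\FF)=0$ and the polynomial algebra $\FF[x,h^py^p]$ when $\chara(\FF)=p>0$; in either case it is a polynomial algebra over $\FF$, hence a UFD with unit group $\FF^{\ast}$, and it carries the $\FF$-linear derivation $\partial$ with $\partial(x)=1$ and $\partial(h^py^p)=0$, whose kernel is $\centh$ (Theorem~\ref{L:center}) and which satisfies $[\hat y,c]=h\,\partial(c)$ for all $c\in\mathsf{C}_{\A_h}(x)$. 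Rewriting $v\hat y=(\hat y-g)v$ as $[\hat y,v]=gv$ then yields the key identity
\[ h\,\partial(v)=g\,v \qquad\text{in}\ \mathsf{C}_{\A_h}(x). \]

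Next I would divide out the prime factors of $h$: write $v=P\tilde v$ with $P=\pr_1^{\beta_1}\cdots\pr_\ell^{\beta_\ell}$ and $\pr_i\nmid\tilde v$ for $1\le i\le\ell$. Since every $\pr_i$ divides $h$, the element $h\,\partial(P)/P=\sum_i\beta_i\pr_i'(h/\pr_i)$ lies in $\DD$; substituting $v=P\tilde v$ into the key identity and cancelling $P$ gives $h\,\partial(\tilde v)=g_1\tilde v$ for some $g_1\in\DD$, still with $\pr_i\nmid\tilde v$ for every \emph{non-central} prime divisor $\pr_i$ of $h$. Now factor $\tilde v=\lambda\prod_j\mathfrak q_j^{m_j}$ into primes of $\mathsf{C}_{\A_h}(x)$, and fix $j$ with $\partial(\mathfrak q_j)\ne0$ and $p\nmid m_j$ (the latter condition being automatic when $\chara(\FF)=0$). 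A prime $\mathfrak q$ of $\mathsf{C}_{\A_h}(x)$ with $\partial(\mathfrak q)\ne0$ cannot divide $\partial(\mathfrak q)$ (a short degree count on leading coefficients), so in $\partial(\tilde v)/\tilde v=\sum_{j'}m_{j'}\partial(\mathfrak q_{j'})/\mathfrak q_{j'}$ the summand $j'=j$ has $\mathfrak q_j$-adic valuation $-1$ while every other summand has valuation $\ge0$; hence $\partial(\tilde v)/\tilde v$ has $\mathfrak q_j$-adic valuation $-1$. But $\partial(\tilde v)/\tilde v=g_1/h$ with $g_1,h\in\DD$, so $\mathfrak q_j$ divides $h$; thus $\mathfrak q_j$ is one of the monic prime factors of $h$, and being non-central (as $\partial(\mathfrak q_j)\ne0$) it equals some $\pr_i$ with $i\le\ell$, contradicting $\pr_i\nmid\tilde v$. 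Therefore, for every $j$, either $\partial(\mathfrak q_j)=0$, so $\mathfrak q_j\in\ker\partial=\centh$, or $p\mid m_j$, so $\mathfrak q_j^{m_j}$ is a $p$-th power and again lies in $\ker\partial=\centh$. In all cases $\mathfrak q_j^{m_j}\in\centh$, and since $\centh$ is a subring, $\tilde v\in\centh$. Taking $z=\tilde v$ gives $v=\pr_1^{\beta_1}\cdots\pr_\ell^{\beta_\ell}z$; and when $\chara(\FF)=p>0$ one may replace each $\beta_i$ by its residue modulo $p$, absorbing the central elements $\pr_i^p\in\FF[x^p]\subseteq\centh$ into $z$, so that $0\le\beta_i<p$.

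The determination of $a$ and $b$ and the reduction to a single divisibility relation $h\,\partial(v)=g\,v$ in the commutative UFD $\mathsf{C}_{\A_h}(x)$ should be routine once the $\hat y$-degree filtration is in hand. I expect the main obstacle to be the prime-by-prime valuation argument inside $\mathsf{C}_{\A_h}(x)$: the delicate point is that the derivation $\partial$ neither introduces a $\mathfrak q$-adic pole nor kills the $\mathfrak q$-primary part of $\tilde v$ unless $\mathfrak q$ is a $p$-th power or lies in $\centh$, and it is precisely the facts that $\mathfrak q\nmid\partial(\mathfrak q)$ when $\partial(\mathfrak q)\ne0$ and that $p$-th powers are $\partial$-closed that make this go through.
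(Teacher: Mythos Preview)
Your proof is correct and takes a genuinely different route from the paper in the characteristic $p$ case.  Both arguments begin the same way: show $vx=xv$ and $v\hat y=(\hat y-g)v$ for some $g\in\DD$, so that $v\in\mathsf{C}_{\A_h}(x)$ and $[\hat y,v]=gv$.  From there the paper expands $v=\sum_{i\equiv 0\,(p)}f_i h^iy^i$ with $f_i\in\FF[x]$, obtains $hf_i'=rf_i$ for \emph{each} coefficient separately, invokes Lemma~\ref{L:normelts} to factor every $f_i$ as $\pr_1^{\beta_{1i}}\cdots\pr_\ell^{\beta_{\ell i}}w_i$ with $w_i\in\FF[x^p]$, and then needs a further step (the identity $f_i'f_j=f_j'f_i$) to show that the exponents $\beta_{ki}$ are actually independent of $i$.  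You instead keep $v$ intact as an element of the two-variable polynomial ring $\mathsf{C}_{\A_h}(x)=\FF[x,h^py^p]$, introduce the derivation $\partial$ with $\partial(x)=1$ and $\partial(h^py^p)=0$, reduce to the single relation $h\,\partial(v)=gv$, and settle everything with a $\mathfrak q$-adic valuation argument in that UFD.

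What this buys: your argument avoids the auxiliary Lemma~\ref{L:normelts} and the exponent-reconciliation step entirely, and treats characteristics $0$ and $p$ uniformly.  The paper's approach is slightly more elementary in that all divisibility takes place in the one-variable ring $\FF[x]$, at the cost of the extra combinatorial matching of exponents across the $y$-components.
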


The proof will use the next lemma. 
\begin{lemma}\label{L:normelts}  Let $\pr_1, \ldots, \pr_\ell$ be the distinct  monic prime factors of $h$ in $\DD$ that are not central in $\A_h$.  If $f$ divides $\delta(f) = hf'$ for $f \in \DD$, then there exist $w \in \DD\cap\centh$ and $\beta_i \in \mathbb Z_{\geq 0}$ for $i=1,\dots,\ell$ so that  $f = \pr_1^{\beta_1} \cdots \pr_\ell^{\beta_\ell}w$.     If $\chara(\FF) = p > 0$, the $\beta_i$ may be chosen so that $0 \leq \beta_i < p$ for all $i$.  \end{lemma}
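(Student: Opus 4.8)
The plan is to analyze the divisibility condition $f \mid hf'$ prime factor by prime factor. Write $f = \pr_1^{\beta_1}\cdots\pr_\ell^{\beta_\ell}\cdot q_1^{\gamma_1}\cdots q_s^{\gamma_s}$ where the $\pr_i$ are (some of) the non-central prime factors of $h$ and the $q_j$ are monic primes that are \emph{not} among $\pr_1,\dots,\pr_\ell$; I must show that in fact no $q_j$ occurs, i.e.\ every prime factor of $f$ lies in $\{\pr_1,\dots,\pr_\ell\}$, and that the cofactor $w := q_1^{\gamma_1}\cdots q_s^{\gamma_s}$ (which should turn out to be forced into $\centh$) is central. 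The key tool is the standard fact that for a prime power $q^{\gamma}\,\|\,f$ (exact divisibility) with $\gamma\geq 1$, we have $q^{\gamma-1}\,\|\,f'$ when $q\nmid \gamma$ in $\FF$, and more generally $q^{\gamma}\mid f'$ can only happen when $\chara(\FF)=p>0$ and $p\mid\gamma$ (in which case $q^{\gamma}$ divides $f'$). So the argument naturally splits along whether $q$ divides $h$ and whether $p\mid\gamma$.

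First I would fix a monic prime $q$ with $q^{\gamma}\,\|\,f$, $\gamma\geq 1$, and ask what power of $q$ divides $hf'$. If $\chara(\FF)=0$ or ($p>0$ and $p\nmid\gamma$), then $q^{\gamma-1}\,\|\,f'$, so $q$ must divide $h$ for $f\mid hf'$ to hold; thus $q$ is one of the prime divisors of $h$. If moreover $q$ is \emph{central} in $\A_h$, I want to absorb $q^{\gamma}$ into $w$; if $q$ is non-central, it is one of the $\pr_i$ and contributes to the $\pr_i^{\beta_i}$ part. The remaining case is $p>0$ and $p\mid\gamma$: then $q^{\gamma}\mid f'$ and the condition $f\mid hf'$ imposes no constraint on $q$ relative to $h$, so such a $q$ could a priori be anything — but since $p\mid\gamma$, the factor $q^{\gamma}=(q^{p})^{\gamma/p}$ lies in $\FF[x^p]\subseteq\centh$ (using the characterization $\centh=\FF[x^p,h^py^p]$ from Theorem~\ref{L:center}, noting $\DD\cap\centh=\FF[x^p]$), so $q^{\gamma}$ goes into $w$.

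Assembling these: collect all prime-power factors $q^{\gamma}$ of $f$ with $p\mid\gamma$ (only possible when $p>0$) together with all prime-power factors $q^{\gamma}$ where $q$ is a central prime divisor of $h$, into the element $w$; I then need to check $w\in\DD\cap\centh$, i.e.\ $w\in\FF[x^p]$ when $p>0$ (automatic in characteristic $0$, where $w$ is just a product of powers of central — hence constant, since in characteristic zero $\centh=\FF1$ — primes, forcing $w\in\FF^*$ and in fact absorbing it suitably). For the central-prime-divisor part when $p>0$: a monic prime $q\mid h$ is central in $\A_h$ iff $q\in\centh$, and by the structure of $\centh$ this forces $q\in\FF[x^p]$; but a prime in $\FF[x^p]$ has the form $q(x)=\tilde q(x^p)$ and one checks $\tilde q$ must itself be prime in $\FF[x]$, so $q^{\gamma}=\tilde q(x^p)^{\gamma}\in\FF[x^p]$. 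The leftover non-central primes are exactly among $\pr_1,\dots,\pr_\ell$ (this is where "some of" the $\pr_i$ — those actually dividing $f$ — appear, with the rest getting exponent $\beta_i=0$), and after pulling out $p\mid\gamma$ multiplicities we may reduce each surviving exponent modulo $p$ into the range $0\leq\beta_i<p$, dumping the removed $(q^p)^{\lfloor\gamma/p\rfloor}$ piece into $w$ as before.

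I expect the main obstacle to be the bookkeeping around central prime factors of $h$ in positive characteristic: one must cleanly argue that "$q$ prime in $\FF[x]$, $q\mid h$, $q$ central" implies $q\in\FF[x^p]$, and conversely track that the accumulated product $w$ of all such central prime powers together with the $(q^p)^{k}$ tails really does land in $\DD\cap\centh=\FF[x^p]$ rather than merely in $\centh$. This hinges on having $\DD\cap\centh=\FF[x^p]$ in characteristic $p$ and $\DD\cap\centh=\FF1$ in characteristic $0$, both of which follow from Theorem~\ref{L:center} since any element of $\centh$ lying in $\DD=\FF[x]$ has no $h^py^p$-terms. The characteristic-zero case is then essentially trivial (divisibility forces every prime factor of $f$ to divide $h$; the non-central ones are the $\pr_i$; and since in characteristic $0$ every prime divisor of $h$ that is not among $\pr_1,\dots,\pr_\ell$ would have to be central, but $\centh=\FF1$ contains no non-constant polynomial, there are no such extra primes, so $w\in\FF^*$ and can be taken to be $1$ after rescaling the $\pr_i^{\beta_i}$ — or simply left as the unit $w$). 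Everything else is the elementary valuation computation for $v_q(f')$ in terms of $v_q(f)$, which I would state once as a sub-claim and reuse.
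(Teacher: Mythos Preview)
Your approach is essentially the paper's: factor $f$ into prime powers and, for each prime $q$ with $q^\gamma \,\|\, f$, use $f \mid hf'$ to show that either $q^\gamma$ lands in $\DD \cap \centh$ or $q$ is one of the non-central prime divisors $\pr_i$ of $h$. The paper's one-line extraction is that $f \mid hf'$ forces $q \mid \gamma q' h$ for each such $q$, and it then splits on whether $\gamma q' = 0$: if $\gamma q' = 0$ then $(q^\gamma)' = \gamma q' q^{\gamma-1} = 0$, so $q^\gamma \in \DD \cap \centh$; otherwise $q \mid h$ and, since then $q' \neq 0$, $q$ is non-central.

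Your case split ``$p \mid \gamma$ versus $p \nmid \gamma$'' has a gap. The claimed ``standard fact'' that $q^{\gamma-1} \,\|\, f'$ whenever $p \nmid \gamma$ (and that $q^{\gamma} \mid f'$ can only happen when $p \mid \gamma$) is false in positive characteristic: a prime $q \in \FF[x]$ can have $q' = 0$ (take $q = x^p - a$ with $a$ not a $p$th power in $\FF$), and then writing $f = q^\gamma g$ with $q \nmid g$ gives $f' = q^\gamma g'$ regardless of $\gamma$, so $q^\gamma \mid f'$ and you cannot conclude $q \mid h$. The desired conclusion still holds in that sub-case---$q' = 0$ gives $q \in \FF[x^p]$, hence $q^\gamma \in \FF[x^p] = \DD \cap \centh$---but your stated route to it is broken. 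Replacing your dichotomy by the paper's ``$\gamma q' = 0$ versus $\gamma q' \neq 0$'' repairs this with no extra work, and in the second case $q' \neq 0$ comes for free, which is exactly what makes $q$ non-central once you know $q \mid h$.
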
  

\begin{proof} The result is clear if $f \in \FF$, so assume $\degg f \geq 1$ and write $f = \mu \vv_1^{\gamma_1} \cdots \vv_n^{\gamma_n}$  where $\mu \in \FF^*$,  $\gamma_i \geq 1$ for all $i$,  and $\vv_1,\dots,\vv_n$ are distinct monic prime polynomials in $\FF[x]$.      Then  $f$ divides  
$$h f'   =  \mu h \sum_{i=1}^n \gamma_i \vv_1^{\gamma_1} \cdots \vv_i^{\gamma_i-1} \cdots \vv_n^{\gamma_n}\vv_i'.$$ 
This implies that  $\vv_j$ divides $\gamma_j \vv_j' h$ for all $j$. Then either $\vv_j$ divides $\gamma_j \vv_j'$ or $\vv_j$ divides $h$. If $\vv_j$ divides $\gamma_j \vv_j'$ then $\gamma_j \vv_j'=0$ which forces $\vv_j^{\gamma_j} \in \DD\cap\centh$, as $\left(\vv_j^{\gamma_j}\right)' =\gamma_j \vv_j'\vv_j^{\gamma_j-1}=0$. Otherwise,  $\vv_j = \pr_k$ for some non-central prime factor of $h$.   The last assertion in the lemma follows from the observation that when  $\chara(\FF) = p >0$,  then $r^p \in \FF[x^p]$ for all $r \in \DD$.   \end{proof} 
 
\begin{proof}[Proof of Theorem \ref{T:normalels}]
Assume $v\neq 0$ is normal in $\A_h$,  and write  $v = \sum_{i=0}^n f_i h^i y^i$, where $f_i \in \DD$ and $f_n \neq 0$. Then there exists  $a \in \A_h$ so that $v x = a v$, and from considering the coefficient of $y^n$, we see that $a \in \DD$,  and in fact $a = x$.  Thus $v x = x v$, and $v \in \mathsf C_{\A_h}(x)$.  Since $ hy \in \A_h$  by Lemma~\ref{lem:A_h-insideA_1}, there exists $b \in \A_h$ so that $v  (hy) = b v$ and, as above, we conclude that $b=hy-r$, for some $r \in \FF[x]$. The latter implies $[hy, v] = r v$.

Recall that  $\mathsf C_{\A_h}(x) = \DD=\FF[x]$ if $\chara (\FF) = 0$.  Hence,  in this case  $v \in \DD$, and $rv = [hy, v] = hv'$,   which implies by  Lemma \ref{L:normelts}
that $v = \zeta \pr_{1}^{\beta_{1}} \cdots \pr_{t}^{\beta_{t}}$,  where $\zeta \in \centh = \FF 1$ and $\beta_i \in \mathbb Z_{\geq 0}$ for all $i$.      
 
Thus,  for the remainder of the proof,  we assume that  $\chara(\FF) = p > 0$,  and because $v \in \mathsf{C}_{\A_h}(x)$,  we can write $v = \sum_{i \equiv 0 \modd p} f_i h^i y^i$.  We now know that  
$$
0=[hy, v] - r v = \sum_{i \equiv 0 \modd p} \left( [hy, f_i]-r f_{i}\right) h^i y^i = \sum_{i \equiv 0 \modd p} \left( hf_i'-r f_{i}\right) h^i y^i,
$$
which forces $r f_i = h f_i'$ for all $i \equiv 0 \modd p$.  This implies that $f_i$ divides $h f_i'$ for all such $i$, so by Lemma \ref{L:normelts}, there exist $w_{i}\in\FF[x^{p}]$ and integers $\beta_{1i}, \ldots, \beta_{\ell i}\in\{ 0,1, \ldots, p-1\}$ such that 
$$f_i=\pr_{1}^{\beta_{1i}} \cdots \pr_{\ell}^{\beta_{\ell i}}w_{i}.$$  

Fix $i,j$ and note $hf_i' f_j =r f_i f_{j} = hf_{j}'f_{i}$ holds, so that  $f_i' f_j = f_j' f_i$ since $h \neq 0$.  Now
$$
0=f_i' f_j - f_j' f_i = w_i w_j\sum_{k = 1}^\ell ( \beta_{k i} - \beta_{k j} ) \pr_1^{\varepsilon_1} \cdots \pr_{k -1}^{\varepsilon_{k - 1}} \pr_{k}^{\varepsilon_{k} -1} \pr_{k +1}^{\varepsilon_{k + 1}} \cdots \pr_\ell^{\varepsilon_\ell} \pr_{k}',
$$
where $\varepsilon_k= \beta_{k i} + \beta_{k j}$ for $k \in \{ 1, \ldots, \ell \}$. If $f_{i}, f_{j}\neq 0$, then $w_{i} w_{j}\neq 0$, and as a result we have 
$$\sum_{k = 1}^\ell ( \beta_{k i} - \beta_{k j} ) \pr_1^{\varepsilon_1} \cdots \pr_{k -1}^{\varepsilon_{k - 1}} \pr_{k}^{\varepsilon_{k} -1} \pr_{k+1}^{\varepsilon_{k+ 1}} \cdots \pr_\ell^{\varepsilon_\ell} \pr_{k}'  = 0,$$
which implies that $( \beta_{k i} - \beta_{k j} )\pr_k'$ is divisible by $\pr_k$ for each $k$.  Since $\pr_k$ is not central, $\pr_k' \neq 0$,  and thus $\beta_{k i} = \beta_{k j}$ for all $k$ and all $i,j$.  Letting $\beta_k$ be
that common exponent, we have  $f_i=\pr_{1}^{\beta_1} \cdots \pr_{\ell}^{\beta_\ell}w_i$ for each $i$, 
which says
$$v = \sum_{i \equiv 0 \modd p} f_i h^i y^i \ = \  \pr_{1}^{\beta_1} \cdots \pr_{\ell}^{\beta_\ell} \sum_{i \equiv 0 \modd p}  w_i h^i y^i  \ \in \  \pr_{1}^{\beta_1} \cdots \pr_{\ell}^{\beta_\ell} \centh.$$
\end{proof}

Several authors have studied the problem of determining simplicity criteria for Ore extensions $\DD[y, {\rm id}_\DD, \delta]$, and it is possible to address the simplicity of the algebras $\A_h$ by using the results of \cite{Jor75} or \cite[Thms.~3.2 and 3.2a]{CF75} for example.   Instead, we apply our results on normal and central elements of $\A_h$ to determine when an algebra $\A_h$ is
simple.

\begin{cor}\label{C:simplicity}
The algebra $\A_h$ is simple if and only if $\chara(\FF) = 0$ and $h \in \FF^*$.
\end{cor}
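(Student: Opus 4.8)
The plan is to argue via the characterization of normal elements in Theorem \ref{T:normalels} together with the description of the center in Theorem \ref{L:center}, since a simple ring has no proper nonzero two-sided ideals, and in particular every normal element generates either $(0)$ or the whole ring, i.e.\ is either $0$ or a unit. First I would establish the "if" direction: suppose $\chara(\FF) = 0$ and $h \in \FF^*$. Then $\A_h \cong \A_1$ is the classical Weyl algebra (after rescaling $\hat y$), and its simplicity in characteristic zero is a standard fact; alternatively one can invoke the earlier structural results directly — by Theorem \ref{L:center}(1) the center is $\FF 1$, and by Proposition \ref{prop:char0-commutator} we have $[\A_h,\A_h] = h\A_h = \A_h$ since $h$ is a nonzero scalar, and a short argument shows any nonzero ideal $I$ must contain $1$: picking $0 \ne a \in I$ of minimal $\hat y$-degree, the commutator $[\hat y, a]$ (and $[x,a]$) lies in $I$ with strictly smaller degree, forcing $a \in \DD = \FF[x]$, and then $[\hat y, a] = h a' = a'$ again drops the $x$-degree, so $I$ contains a nonzero scalar.

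For the "only if" direction, I would prove the contrapositive: if either $\chara(\FF) = p > 0$ or $h \notin \FF^*$, then $\A_h$ has a proper nonzero two-sided ideal. If $h \notin \FF^*$, then $h$ has a prime factor $g \in \DD$, and by Lemma \ref{lem:h-factorsNormal} the element $g$ is normal, so $g\A_h = \A_h g$ is a two-sided ideal; it is nonzero, and it is proper because $g$ is not a unit (Theorem \ref{thm:basicFactsOre}(4): the units of $\A_h$ are just $\FF^*$). If instead $h \in \FF^*$ but $\chara(\FF) = p > 0$, then by Theorem \ref{L:center}(2) the center $\centh = \FF[x^p, h^p y^p]$ strictly contains $\FF 1$ — for instance $x^p$ is central and is neither zero nor a unit — so the ideal $\A_h x^p = x^p \A_h$ is a proper nonzero two-sided ideal. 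In both cases $\A_h$ is not simple, which completes the contrapositive.

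I do not expect a serious obstacle here: the real content was already extracted into Theorem \ref{T:normalels}, Theorem \ref{L:center}, and Lemma \ref{lem:h-factorsNormal}, and this corollary is essentially an assembly of those facts. The one point requiring a little care is the "if" direction — one must verify that no proper nonzero ideal exists, not merely that there are no proper normal elements, since a ring can in principle have two-sided ideals not generated by a single normal element; but the minimal-degree descent argument sketched above handles this cleanly, and it is exactly the classical proof of simplicity of the Weyl algebra adapted to the generators $x, \hat y$ with $[\hat y, x] = h \in \FF^*$.
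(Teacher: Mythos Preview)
Your proof is correct and follows essentially the same approach as the paper: exhibit a non-unit normal (or central) element to witness non-simplicity, and appeal to the classical simplicity of the Weyl algebra for the converse. The paper's version is marginally more streamlined---it uses $h$ itself (rather than a prime factor) as the non-unit normal element via Lemma~\ref{lem:h-factorsNormal}, and simply cites the well-known simplicity of $\A_1$ in characteristic zero instead of writing out the degree-descent argument---but the underlying strategy is the same.
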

\begin{proof}
Suppose $\A_{h}$ is simple.  If $b \neq 0$ is a normal element of $\A_h$, then $b \A_h = \A_h b= \A_h$ by simplicity, so $b$ is a unit.  Since the units of $\A_h$ are the elements of $\FF^*$, we see that $h \in \FF^*$ by Lemma \ref{lem:h-factorsNormal}, and also $\mathsf Z(\A_h) = \FF 1$.  But then $\chara(\FF) = 0$ by Lemma~\ref{L:center}.  Conversely, if 
$\chara(\FF) = 0$ and $h \in \FF^*$,  then $\A_h$ is isomorphic to the Weyl algebra, and it  is well known that $\A_1$ is simple. 
\end{proof}

A (noncommutative) Noetherian
domain is said to be a \emph{unique factorization ring} (Noetherian UFR for
short), if every nonzero prime ideal contains a nonzero prime ideal generated by a normal element.
The {\it height} of a prime ideal is the largest  length of a chain of prime ideals contained in it (or is $\infty$ if no bound exists). 
A Noetherian UFR is said to be a \emph{unique factorization domain} (Noetherian UFD for short) 
if every height one prime factor is a domain.   These notions were introduced by Chatters and Jordan in~\cite{Ch84, ChJ86}.   If a Noetherian
domain satisfies the descending chain condition on prime ideals (e.g. if it has finite Gelfand-Kirillov dimension \cite[Cor.\ 8.3.6]{McR01}), then it is a Noetherian UFR if and only if every height one prime ideal is generated by a normal element. Recently, Goodearl and Yakimov \cite{GY12} have used the properties of noncommutative Noetherian UFDs to construct initial clusters for defining quantum cluster algebra structures on a noncommutative domain.

Since $\DD=\FF[x]$ is a principal ideal domain, \cite[Thm.\ 5.5]{ChJ86} trivially implies the first part of the following observation. The second part follows by~\cite[Thm.\ 9.24]{GW89}.

\begin{lemma}
$\A_{h}$ is a Noetherian UFR. If $\chara(\FF) =  0$, then $\A_{h}$ is a Noetherian UFD.
\end{lemma}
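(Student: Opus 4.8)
The plan is to invoke the general structure theory of Chatters and Jordan for Ore extensions, specialized to the case where the coefficient ring is a principal ideal domain. First I would recall that $\DD = \FF[x]$ is a commutative principal ideal domain, hence trivially a commutative Noetherian UFD in the sense of \cite{ChJ86}. By Theorem \ref{thm:basicFactsOre}, $\A_h = \DD[y, \mathsf{id}_\DD, \delta]$ is a Noetherian domain, and the endomorphism $\sigma = \mathsf{id}_\DD$ is an automorphism, so the hypotheses of \cite[Thm.\ 5.5]{ChJ86} are met: an Ore extension $\DD[y,\sigma,\delta]$ of a Noetherian UFD $\DD$ by an automorphism $\sigma$ and a $\sigma$-derivation $\delta$ is again a Noetherian UFR (the theorem asks $\sigma$ to permute the height one primes of $\DD$ suitably, which is automatic here since $\sigma$ is the identity). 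This immediately gives the first assertion.

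For the second assertion, I would observe that when $\chara(\FF) = 0$, every height one prime ideal $P$ of $\A_h$ is generated by a normal element (by the UFR property together with the descending chain condition on primes, which holds because $\A_h$ has GK dimension $2$, hence finite GK dimension, by \cite[Cor.\ 8.3.6]{McR01}). To conclude that $\A_h$ is a Noetherian UFD, I must show that for each such $P$, the factor ring $\A_h/P$ is a domain, equivalently that $P$ is a completely prime ideal. Here is where \cite[Thm.\ 9.24]{GW89} enters: in characteristic zero, the relevant statement is that a prime ideal generated by a normal element in a suitable Noetherian domain — in particular in an iterated Ore extension over $\FF[x]$ with $\FF$ of characteristic zero — is completely prime. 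Concretely, by Theorem \ref{T:normalels} the normal generator of $P$ has the form $\pr_1^{\beta_1}\cdots\pr_\ell^{\beta_\ell} z$ with $z \in \centh = \FF 1$ when $\chara(\FF) = 0$; since $P$ is prime of height one, the generator must be a single prime factor $\pr_i$ of $h$ (the central part being a unit), and one checks directly that $\A_h/\pr_i\A_h$ is a domain. Alternatively, and more cleanly, I would cite \cite[Thm.\ 9.24]{GW89} verbatim, which handles this passage from UFR to UFD in characteristic zero.

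The main obstacle I anticipate is verifying that the precise hypotheses of \cite[Thm.\ 5.5]{ChJ86} and \cite[Thm.\ 9.24]{GW89} are satisfied — in particular checking the condition on how $\sigma$ interacts with the height one primes of $\DD$ and the Noetherian/GK-dimension hypotheses — rather than any substantive new computation; once the citations apply, both conclusions are immediate. A secondary point worth verifying is that in positive characteristic the UFD conclusion genuinely fails: indeed, $\mathsf Z(\A_h) = \FF[x^p, h^p y^p]$ is nontrivial, and for instance $x^p \A_h$ is a height one prime that is not completely prime (the image of $x$ in $\A_h/x^p\A_h$ is nilpotent), so the characteristic-zero restriction in the second assertion is essential and not merely an artifact of the proof method.
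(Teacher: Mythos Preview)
Your proposal is correct and takes essentially the same approach as the paper: both assertions are obtained by direct citation, the UFR statement from \cite[Thm.\ 5.5]{ChJ86} (using that $\FF[x]$ is a PID and $\sigma=\mathsf{id}$) and the characteristic-zero UFD statement from \cite[Thm.\ 9.24]{GW89}. Your additional commentary is fine, though note that your tangential example $x^p\A_h$ only works when $x\nmid h$ (otherwise it is not prime, being contained in the smaller prime $x\A_h$); the paper instead uses $h^p y^p$, which always yields a height one prime with non-domain quotient.
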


The algebra $\A_0 = \FF[x,y]$ is a Noetherian UFD for any field $\FF$.
We will see shortly that $\A_{h}$ is not a Noetherian UFD when $\chara(\FF) = p > 0$ and  $h \neq 0$. 

The next result describes the height one prime ideals of $\A_{h}$. It is known that over a field of prime characteristic the Weyl algebra $\A_{1}$ is Azumaya over its center (see~\cite[Th\'e.\ 2]{R73}), so in this case the prime ideals of $\A_{1}$ are in bijection with the prime ideals of $\cent1$. If $\degg h \geq 1$,  there may be prime ideals of $\A_{h}$ which are not centrally generated.

\begin{thm}\label{T:height1primes}
Let $\pr_1, \ldots, \pr_t$ be the distinct monic prime factors of $h$ in $\DD$, as in \eqref{eq:hfactor}.
For every $1\leq i\leq t$,  the normal element $\pr_{i}$ generates a height one prime ideal of $\A_{h}$,  and the corresponding quotient algebra  is a domain.
\begin{enumerate}
\item[{\rm (i)}] If $\chara(\FF) =  0$, these are all the height one prime ideals.
\item[{\rm (ii)}] If $\chara(\FF) = p > 0$,  then any nonzero irreducible polynomial in 
$\centh$ that (up to associates) is not of the form $\pr_{i}^{p}$ for any $1\leq i \leq t$ generates
a height one prime ideal.   These, along with the ideals generated by some $\pr_i$,  constitute all the height one prime ideals.
\end{enumerate}
\end{thm}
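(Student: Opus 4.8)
The plan is to combine the description of the normal elements of $\A_h$ from Theorem~\ref{T:normalels} with two separate primality arguments: an elementary quotient computation that handles the prime divisors $\pr_i$ of $h$ in every characteristic, and, in characteristic $p$, an argument that leans on the Weyl algebra being Azumaya over its center. I would first dispose of the claims about the $\pr_i$. Since $\pr_i\mid h$, the element $\pr_i$ is normal by Lemma~\ref{lem:h-factorsNormal}, so $\pr_i\A_h=\A_h\pr_i$ is a two-sided ideal, and from $\A_h=\bigoplus_{n\ge0}\DD\hat y^n$ one gets $\pr_i\A_h=\bigoplus_{n\ge0}\pr_i\DD\,\hat y^n$. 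Because $h\in\pr_i\DD$, the identity $\hat y f=f\hat y+hf'$ shows that $\hat y$ is central modulo $\pr_i\A_h$, whence $\A_h/\pr_i\A_h\cong\bigl(\FF[x]/(\pr_i)\bigr)[\,\overline{\hat y}\,]$, a polynomial ring over a field and hence a domain. So $\pr_i\A_h$ is a nonzero prime with domain quotient; being minimal over the ideal generated by the normal element $\pr_i$, it has height at most one by the noncommutative principal ideal theorem \cite{McR01}, hence exactly one.

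Next I would classify the height one primes. As $\A_h$ is a Noetherian UFR of finite GK dimension, it satisfies the descending chain condition on primes, so a height one prime has the form $P=v\A_h$ with $v$ normal, and by Theorem~\ref{T:normalels} we may take $v=\pr_1^{\beta_1}\cdots\pr_\ell^{\beta_\ell}z$ with $z\in\centh$, where $\pr_1,\dots,\pr_\ell$ are the non-central prime factors of $h$. Factor $z$ into irreducibles in the two-variable polynomial ring $\centh$. Using $(a\A_h)(b\A_h)=ab\A_h$ for commuting normal $a,b$ — which applies to the $\pr_i\A_h$ (the $\pr_i$ lie in $\DD$) and to the ideals $q\A_h$ generated by the central irreducible factors $q$ of $z$ — the ideal $v\A_h$ equals the product of these ideals with suitable multiplicities, so the prime $P$ contains one of them. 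If $P\supseteq\pr_i\A_h$, then $P=\pr_i\A_h$ by the preceding paragraph; if $P\supseteq q\A_h$ where $q$ is, up to associates, $\pr_i^p$ for a non-central $\pr_i$, then $q\A_h=(\pr_i\A_h)^p$ and primality again forces $P=\pr_i\A_h$. When $\chara(\FF)=0$, $\centh=\FF1$ makes $z$ a unit, so only the first case occurs and we obtain (i). For $\chara(\FF)=p>0$ it remains to prove that an irreducible $q\in\centh$ not associate to any $\pr_i^p$ generates a prime ideal of height one; granting this, the argument just given identifies these ideals, together with the $\pr_i\A_h$, as all the height one primes.

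The main obstacle is this last step: $\A_h/q\A_h$ need not be a domain, so primality cannot be checked directly, and I would instead pass through the Weyl algebra, which is Azumaya over $\cent1$ when $\chara(\FF)=p$ \cite{R73}. In $\centh\cong\FF[x^p,h^py^p]$ the element $h^p$ lies in $\FF[x^p]$, and its irreducible factors there are exactly the $\pr_i^p$ for non-central $\pr_i$ together with the central prime factors $\pr_j$ of $h$. If $q\mid h^p$ then, by hypothesis, $q$ is associate to a central $\pr_j$ and $q\A_h=\pr_j\A_h$ is already covered. If $q\nmid h^p$: inverting the powers of $h$ gives $\A_h[h^{-1}]=\A_1[h^{-1}]=:\BB$ by Corollary~\ref{cor:hPowers-OreSet}, and $\BB$ is Azumaya over its center $\cent1[h^{-1}]=\centh[h^{-p}]$; since $\centh[h^{-p}]$ is $\centh$ localized at powers of $h^p$ and $q\nmid h^p$, the element $q$ stays prime there, so $q\BB$ is a nonzero prime ideal of $\BB$ and $\BB/q\BB$ is prime. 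On the other hand $\A_h$ is free over $\centh$ (Proposition~\ref{prop:A_h-freeOverZ}), so the image of $h^p$ is a regular central element of $\A_h/q\A_h$, and localizing at it gives $(\A_h/q\A_h)[\,\overline{h^p}^{-1}\,]\cong\BB/q\BB$; since a ring embeds in its localization at a central regular element, primeness of $\BB/q\BB$ forces $\A_h/q\A_h$ to be prime. Finally $q\A_h$, generated by the central element $q$, is a nonzero prime of height at most one by the principal ideal theorem, hence of height one, which completes (ii).
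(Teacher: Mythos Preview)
Your argument is correct and, for the first two paragraphs, matches the paper's proof closely: the quotient computation $\A_h/\pr_i\A_h\cong(\DD/\pr_i\DD)[\hat y]$, the Principal Ideal Theorem, and the reduction via the Noetherian UFR property together with Theorem~\ref{T:normalels} are exactly what the paper does.

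The genuine difference is in the last step, showing in characteristic $p$ that an irreducible $q\in\centh$ not associate to any $\pr_i^p$ generates a prime ideal. The paper argues directly: take a minimal prime $\PR\supseteq q\A_h$, write $\PR=v\A_h$ with $v$ normal, and rule out $v=\pr_n$ by an explicit computation. Writing $q=\pr_n a$ with $a=\sum_{i}r_i h^i y^i$, centrality of $q$ forces $r_i=0$ for $i\not\equiv 0\modd p$ and $\pr_n r_i\in\FF[x^p]$, and a prime-factorization count then shows $\pr_n^p\mid q$, contradicting the hypotheses; hence $v$ is central and irreducibility gives $\PR=q\A_h$. Your route instead invokes the Azumaya property of $\A_1$ over $\cent1$ \cite{R73}: after checking $q\nmid h^p$ (and disposing of the $q\mid h^p$ case, which reduces to a central $\pr_j$), you localize at powers of $h$, identify $\A_h[h^{-1}]=\A_1[h^{-1}]$ with center $\centh[h^{-p}]=\cent1[h^{-p}]$, and pull primality back through the embedding $\A_h/q\A_h\hookrightarrow\BB/q\BB$ using regularity of $\overline{h^p}$ (which follows from Proposition~\ref{prop:A_h-freeOverZ}). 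Both arguments are sound; the paper's is more elementary and self-contained, while yours explains structurally \emph{why} these central irreducibles give primes---they come from primes of the Azumaya center---at the cost of importing more machinery.
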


\begin{proof} First notice that each $\pr_{i}$ generates a prime ideal of $\A_{h}$, as the quotient algebra $\A_{h}/\pr_{i}\A_{h}$ is isomorphic to the commutative polynomial algebra $\left(\DD/\pr_{i}\DD\right)[\hat y]$ over the field $\DD/\pr_{i}\DD$. In particular, $\A_{h}/\pr_{i}\A_{h}$ is a domain,  and the prime ideal $\pr_{i}\A_{h}$ has height one  by the Principal Ideal Theorem (see \cite[Thm.\ 4.1.11]{McR01}).

Let $\PR$ be a height one prime ideal. Since $\A_{h}$ is a Noetherian UFR, it follows that $\PR=v\A_{h}$ for some normal element $v\neq 0$. Moreover, the primality of $\PR$ implies that $v$ is not a (non-trivial) product of normal elements. Thus, Theorem~\ref{T:normalels} implies that either $v$ is an irreducible factor of $h$ or a central element which is irreducible as an element in $\centh$.   When $\chara(\FF) =  0$,  then $v$ must be an irreducible factor of $h$, as $\centh = \FF 1$, which proves (i).

For the remainder of the proof assume $\chara(\FF) = p > 0$. Note that if $z\in\centh$ is of the form $\xi \pr_{i}^{p}$ for some $i$ and some $\xi \in\FF^{*}$, then $z\A_{h}$ is not a prime ideal. So it remains to show that if $z$ is an irreducible polynomial in $\centh$,  which is not of the form $\xi \pr_{i}^{p}$ for $1\leq i\leq t$ and $\xi \in\FF^{*}$, then $z\A_{h}$ is a height one prime ideal. We can further assume $z$ is not an irreducible factor of $h$, as this case has already been considered. Let $\PR\supseteq z\A_{h}$ be a minimal prime over $z\A_{h}$. By the Principal Ideal Theorem, $\PR$ has height one,  and thus $\PR=v\A_{h}$ for some normal element $v$.

Suppose first that $v$ is an irreducible factor of $h$, say $v=\pr_{n}$. Then $z\in \PR=v\A_{h}$, so $z=\pr_{n}a$ for some $a\in\A_{h}$. Write $a=\sum_{i\geq 0}r_{i}h^{i}y^{i}$ with $r_{i}\in\FF[x]$, so that $z=\pr_{n}a=\sum_{i\geq 0}\pr_{n}r_{i}h^{i}y^{i}$. As $z$ is central, we must have $r_{i}=0$ if $i\not\equiv 0\modd p$ and $\pr_{n}r_{i}\in\FF[x^{p}]$ for all $i\equiv 0\modd p$. Fix $j$ with $j\equiv 0\modd p$ and $r_{j}\neq 0$. Let $\vv_1^{\gamma_1} \cdots \vv_m^{\gamma_m}$ be the prime decomposition of $\pr_{n}r_{j}$ in $\FF[x]$, with $\vv_{1}=\pr_{n}$. Then $\gamma_{1}\geq 1$ and since $\pr_{n}r_{j}\in\FF[x^{p}]$, it follows that $\vv_i^{\gamma_i}\in\FF[x^{p}]$ for all $1\leq i\leq m$. In particular, $\pr_{n}^{\gamma_{1}}\in\FF[x^{p}]$, so that either $\gamma_{1}\equiv 0\modd p$ or $\pr_{n}\in\FF[x^{p}]$. If the latter holds,  then $z=\pr_{n}a$ implies that $a\in\centh$.  The irreducibility of $z$ in $\centh$ implies that $a\in\FF^{*}$,  and thus $z$ is an irreducible factor of $h$, which contradicts our previous assumption. So it must be that $\gamma_{1}\equiv 0\modd p$. As $\gamma_{1}\geq 1$, it follows that $\gamma_{1}\geq p$ and $\pr_{n}^{p}$ divides $\pr_{n}r_{j}$. Since $j\equiv 0\modd p$ was arbitrary subject to the restriction that $r_{j}\neq 0$, we deduce that  $z=\pr_{1}^{p}c$ for some $c \in\centh$. The irreducibility of $z$ in $\centh$ again implies that $z$ is a scalar multiple of $\pr_{n}^{p}$, which violates our assumptions on $z$. 

It follows from the arguments in the preceding paragraph that $v$ is not an irreducible factor of $h$. Hence $v\in\centh$,  and again we deduce that $z=va$ for some $a\in\centh$. Thus, as $z$ is irreducible in $\centh$, it must be that $a\in\FF^{*}$ and $z\A_{h}=v\A_{h}=\PR$ is a height one prime ideal.
\end{proof}    

\begin{cor}
Assume $\chara(\FF) = p > 0$. Then $\A_{h}$ is not a Noetherian UFD.
\end{cor}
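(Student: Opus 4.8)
The plan is to produce a single height one prime ideal $\PR$ of $\A_h$ for which the quotient $\A_h/\PR$ is not a domain. Since a Noetherian UFR is a Noetherian UFD exactly when every height one prime factor is a domain, exhibiting such a $\PR$ proves the corollary (recall $h\neq 0$ by Conventions~\ref{con:gens}).

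I would take $\PR = z\A_h$ with $z = h^p y^p$, and first confirm that $\PR$ is a height one prime. By Theorem~\ref{L:center}, $\centh \cong \FF[x^p,\, h^p y^p]$ is a polynomial algebra in two variables, so $z = h^p y^p$ is irreducible in $\centh$; moreover $z$ is not an associate of any $\pr_i^p$ (all the $\pr_i^p$ lie in the subalgebra $\FF[x^p]$ and do not involve $y$, whereas $z$ has positive $\hat y$-degree) and is not an irreducible factor of $h$. Hence Theorem~\ref{T:height1primes}(ii) applies and gives that $z\A_h$ is a height one prime ideal; since $z$ is central, $z\A_h = \A_h z$ is a two-sided ideal.

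The second step is to exhibit zero divisors in $\A_h/\PR$. Here I would use the factorization \eqref{eqn:y^kh^k} with $i=p$, namely $z = h^p y^p = \hat y\, q$ where $q := (\hat y + h')(\hat y + 2h')\cdots(\hat y + (p-1)h') \in \A_h$. Modulo $\PR$ the product $\hat y\, q$ is $0$, so it remains to check that neither $\hat y$ nor $q$ lies in $\PR$. This is a $\hat y$-degree count in the free left $\DD$-module $\A_h = \bigoplus_{i\ge 0}\DD\hat y^i$: by \eqref{eq:centgen}, $z = \hat y^p - \frac{\delta^p(x)}{h}\hat y$ is monic of $\hat y$-degree $p$, and using \eqref{eq:Ahcom} to reorder, for any nonzero $a \in \A_h$ the element $az = a\hat y^p - a\frac{\delta^p(x)}{h}\hat y$ has the same leading term as $a\hat y^p$, so its $\hat y$-degree is $\deg_{\hat y}(a) + p$. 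Thus every nonzero element of $\PR$ has $\hat y$-degree at least $p$, whereas $\hat y$ has $\hat y$-degree $1$ and $q$ has $\hat y$-degree $p-1$, both strictly less than $p$ since $p$ is prime. Hence the images of $\hat y$ and $q$ in $\A_h/\PR$ are nonzero, yet their product is the image of $z$, namely $0$; so $\A_h/\PR$ is not a domain.

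The only point needing care is this last degree estimate: one must check that the term $a\frac{\delta^p(x)}{h}\hat y$ has $\hat y$-degree strictly less than $\deg_{\hat y}(a)+p$, so that it cannot cancel the leading term coming from $a\hat y^p$. This is immediate from \eqref{eq:Ahcom} — moving $\hat y^i$ past an element of $\DD$ only lowers the $\hat y$-degree — together with the fact that $p \ge 2$. Everything else is a direct appeal to the structure results already established, namely Theorems~\ref{L:center}, \ref{T:normalels}, and \ref{T:height1primes}.
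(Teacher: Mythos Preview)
Your proof is correct and follows essentially the same approach as the paper: both exhibit $\PR = h^p y^p \A_h$ as a height one prime via Theorem~\ref{T:height1primes}, then use the factorization of $h^p y^p$ from \eqref{eq:centgen}/\eqref{eqn:y^kh^k} together with a $\hat y$-degree (equivalently $y$-degree) count to see that neither factor lies in $\PR$. The only cosmetic difference is that the paper factors as $\bigl(\hat y^{p-1} - \frac{\delta^p(x)}{h}\bigr)\hat y$ while you factor as $\hat y\,q$; your more explicit verification of the degree bound is a welcome elaboration of what the paper leaves implicit.
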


\begin{proof}
By Theorems~\ref{L:center} and~\ref{T:height1primes}, the element $h^{p}y^{p}$ generates a height one prime ideal of $\A_{h}$, as it is irreducible in $\centh$ and it is not a power of a factor of $h$. However, by \eqref{eq:centgen} we have $h^p y^p =\left( \hat y^{p-1} - \frac{\delta^p(x)}{h}\right) \hat y$.  Yet neither one of these two factors is in $h^{p}y^{p}\A_{h}$, by considering the degree in $y$ of an element in $h^{p}y^{p}\A_{h}$. Thus, the prime ring $\A_{h}/h^{p}y^{p}\A_{h}$ is not a domain.
\end{proof}

\begin{remark}
Since $\A_{h}$ has Gelfand-Kirillov dimension $2$, it follows from \cite[Cor.\ 8.3.6]{McR01} that the possible values for the height of a prime ideal $\PR$ of $\A_h$  are $0,1$,  and $2$. The zero ideal is prime and is thus the unique prime ideal of height zero. The height one prime ideals are given in Theorem \ref{T:height1primes}.   The height two prime ideals of $\A_h$ must be maximal,  and no height one prime ideal of $\A_{h}$ can be maximal. Indeed,  for the height one prime ideals of the form $\pr_{i}\A_{h}$,  $1\leq i\leq t$, the quotient $\A_{h}/\pr_{i}\A_{h}$ is a commutative polynomial algebra. When $\chara(\FF)=p>0$,  the center  $\centh$ is a polynomial algebra in two variables, so if $v$ is an irreducible polynomial in $\centh$ as in Theorem \ref{T:height1primes} (ii) above, it follows that any maximal ideal of $\centh$ containing $v$ induces a maximal ideal of $\A_{h}$ strictly containing $v\A_{h}$.

Hence, the height two prime ideals of $\A_{h}$ are precisely the maximal ideals of $\A_{h}$, and can be identified with the maximal ideals of $\A_{h}/\PR$, as $\PR$ ranges through the height one prime ideals. In particular, if $\chara(\FF)=0$ and the prime factors of $h$ in $\FF[x]$ are linear, then the height two prime ideals of $\A_{h}$ are the ideals generated by $x-\lambda$ and $q(\hat y)$, where $\lambda\in\FF$ is a root of $h$ and $q(\hat y)\in\FF[\hat y]$ is an irreducible polynomial.
\end{remark}

\begin{section}{Automorphisms of $\A_h$} \end{section}
Extending results of Dixmier \cite{Dix68} on the automorphisms of the Weyl algebra $\A_1$,  
Bavula and Jordan \cite{BJ01}  considered isomorphisms and automorphisms of generalized  Weyl algebras over polynomial algebras  of
characteristic 0.    Alev and Dumas \cite{AD97} initiated  the study of automorphisms of Ore extensions over the polynomial algebra $\DD = \FF[x]$, and the results in \cite{AD97} have been further developed in the recent work \cite{Gaddis12} of Gaddis.   In Theorem \ref{T:isos:A_h}, we summarize results from \cite{AD97}  that pertain to the algebras $\A_h$ studied here, but suitably interpreted in the notation of the present paper.  
Since one of those results assumes that $\chara(\FF) = 0$, we first prove Lemma \ref{lem:isomRtoR}, which can be used to remove that characteristic assumption.   This will enable us to prove our main results, Theorems \ref{T:autosA_hprod} and \ref{T:Pinfo}, which give a complete  description of the automorphisms of $\A_h$ over arbitrary fields. 
\medskip

\begin{lemma}\label{lem:isomRtoR}
If $\theta : \A_h \to \A_g$ is an isomorphism, then $\theta (h) = \lambda g$ for some $\lambda \in \FF^*$.
\end{lemma}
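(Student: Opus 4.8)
The plan is to exploit the fact that $h\mathsf{A}_h = [\mathsf{A}_h,\mathsf{A}_h]$ is (essentially) a Lie-bracket invariant, so that an isomorphism $\theta$ must send the commutator ideal of $\mathsf{A}_h$ to that of $\mathsf{A}_g$. More precisely, $\theta([\mathsf{A}_h,\mathsf{A}_h]) = [\mathsf{A}_g,\mathsf{A}_g]$ because $\theta$ is an algebra isomorphism and the commutator subspace is defined purely in terms of multiplication. Lemma~\ref{lem:[A_h,A_h]-sub-hA_h} gives $[\mathsf{A}_h,\mathsf{A}_h]\subseteq h\mathsf{A}_h$ and, symmetrically, $[\mathsf{A}_g,\mathsf{A}_g]\subseteq g\mathsf{A}_g$. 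What I still need is a reverse-type containment or, at minimum, enough of one to pin down the image of $h$ up to a scalar. In characteristic $0$ this is immediate from Proposition~\ref{prop:char0-commutator}, which says $h\mathsf{A}_h = [\mathsf{A}_h,\mathsf{A}_h]$ exactly; then $\theta(h\mathsf{A}_h) = g\mathsf{A}_g$, so $\theta(h)$ generates the right ideal $g\mathsf{A}_g$, forcing $\theta(h) = g u$ with $u$ a unit, i.e.\ $\theta(h)=\lambda g$ for $\lambda\in\FF^*$ by Theorem~\ref{thm:basicFactsOre}(4). To handle arbitrary characteristic uniformly, I would instead argue at the level of the normal element $h$: by Lemma~\ref{lem:h-factorsNormal}, $h$ is normal in $\mathsf{A}_h$, so $\theta(h)$ is normal in $\mathsf{A}_g$, and by Theorem~\ref{T:normalels} every normal element of $\mathsf{A}_g$ is $\pr_1^{\beta_1}\cdots\pr_\ell^{\beta_\ell} z$ with $z$ central.

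Here is the cleaner route I would actually write. Since $[\mathsf{A}_h,\mathsf{A}_h]\subseteq h\mathsf{A}_h$ and, applying $\theta$, also $[\mathsf{A}_g,\mathsf{A}_g] = \theta([\mathsf{A}_h,\mathsf{A}_h]) \subseteq \theta(h)\,\mathsf{A}_g$, I get that $\theta(h)\mathsf{A}_g$ is a two-sided ideal containing the commutator ideal; hence the quotient $\mathsf{A}_g/\theta(h)\mathsf{A}_g$ is commutative. Symmetrically, applying $\theta^{-1}$, $\mathsf{A}_h/h\mathsf{A}_h$ being commutative already follows from the inclusion above, but the point is that $\theta(h)\mathsf{A}_g \supseteq$ (the ideal generated by all commutators), while $g\mathsf{A}_g$ is the smallest ideal with commutative quotient whose quotient is moreover the polynomial ring $(\DD/g\DD)[\hat y]$ in the relevant sense. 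The clean statement: an element $a\in\mathsf{A}_g$ has $a\mathsf{A}_g$ of finite codimension with commutative quotient iff $a\in\DD$ up to a unit times a normal element; combining with Theorem~\ref{T:normalels}, $\theta(h) \in \DD\cap\centh$-type constraints force $\theta(h)$ to be a scalar multiple of a divisor-product of $g$. I then need the matching inequality to rule out $\theta(h)$ being a proper divisor of $g$, which I get by running the same argument on $\theta^{-1}$: $\theta^{-1}(g)$ must divide (a unit times normal factors of) $h$, and degrees must match, so $\deg h = \deg g$ and $\theta(h)=\lambda g$.

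The step I expect to be the main obstacle is the prime-characteristic case, specifically showing that $\theta(h)$ cannot be a scalar multiple of a proper product $\pr_{i_1}\cdots\pr_{i_k}$ of prime factors of $g$ times a nontrivial central element, i.e.\ isolating exactly the factor $g$ rather than some other normal element with the same commutator ideal. The resolution I have in mind: $h\mathsf{A}_h = [\mathsf{A}_h,\mathsf{A}_h] + (\text{Frobenius/central corrections})$ fails in characteristic $p$, so instead I should compare $\mathsf{A}_h/h\mathsf{A}_h \cong (\DD/h\DD)[\hat y]$ directly — a polynomial ring over the \emph{finite-dimensional} algebra $\DD/h\DD$ — against $\mathsf{A}_g/\theta(h)\mathsf{A}_g$, and use that $\theta$ induces an isomorphism $\mathsf{A}_h/h\mathsf{A}_h \cong \mathsf{A}_g/\theta(h)\mathsf{A}_g$. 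Matching the structure of these quotients (both must be polynomial rings in one variable over $\DD$ modulo the respective element, and the "base" must be $\DD$ modulo a polynomial of the same degree) pins down $\deg\theta(h) = \deg h = \deg g$; since $\theta(h)$ is a product of prime factors of $g$ by Theorem~\ref{T:normalels} and has the same degree as $g$, it equals $\lambda g$ with $\lambda\in\FF^*$. I would present the characteristic-$0$ argument via Proposition~\ref{prop:char0-commutator} first as motivation, then give the uniform argument through the quotient rings $\mathsf{A}_g/\theta(h)\mathsf{A}_g$ and normality.
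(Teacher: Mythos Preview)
You have the right idea buried in your ``cleaner route'' paragraph, and it is exactly the paper's argument: the two-sided ideal $\BBh \subseteq \A_h$ minimal with the property that $\A_h/\BBh$ is commutative is a characteristic ideal, it equals $h\A_h$ (since $h$ is normal, $\A_h/h\A_h$ is commutative, and any ideal with commutative quotient must contain $h = [\hat y,x]$), and likewise $\BBg = g\A_g$. Hence $\theta(h)\A_g = \theta(\BBh) = \BBg = g\A_g$, and since $\A_g$ is a domain this forces $\theta(h) = \lambda g$ with $\lambda \in \FF^*$. That is the entire proof, uniform in all characteristics.

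Your presentation obscures this by detouring through Theorem~\ref{T:normalels} and a degree-matching argument on quotient rings. Those detours are not only unnecessary but, as written, incomplete in characteristic $p$: Theorem~\ref{T:normalels} tells you only that $\theta(h) = \pr_1^{\beta_1}\cdots\pr_\ell^{\beta_\ell} z$ with $z \in \mathsf{Z}(\A_g) = \FF[x^p, g^p y^p]$, so a priori $\theta(h)$ could involve $y$. Your comparison of $\A_g/\theta(h)\A_g$ with a polynomial ring $(\DD/f\DD)[\hat y]$ and the ensuing ``$\deg \theta(h) = \deg g$'' tacitly assume $\theta(h)\in\FF[x]$, which you have not established. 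The repair is already in your own text: you note that one can run the same argument on $\theta^{-1}$. Doing so gives $h\A_h \subseteq \theta^{-1}(g)\A_h$, i.e.\ $\theta(h)\A_g \subseteq g\A_g$, and combined with $g\A_g \subseteq \theta(h)\A_g$ (from minimality of $\BBg$) you get equality of principal ideals in a domain, hence $\theta(h)=\lambda g$. Drop the normal-element classification and the quotient-structure matching entirely; the minimal-commutative-quotient ideal does all the work.
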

\begin{proof}
Let $\BBh$ be the ideal of $\A_h$ minimal with the property that $\A_h/\BBh$ is commutative. Then $[y,x] = 0$ in the quotient $\A_h / \BBh$, so it follows that $h \in \BBh$.  The element $h$ is normal in $\A_h$ and $h \A_h \subseteq \BBh$, so the minimality of $\BBh$, with the fact that $\A_h / h \A_h$ is commutative,  implies that $h \A_h = \BBh$.  Similar reasoning shows that $\BBg = g \A_g$ is the ideal of $\A_g$ minimal with the property that $\A_g / \BBg$ is commutative.  As $\BBh$ and $\BBg$ are obviously characteristic ideals, it follows that $\theta (\BBh)=\BBg$.     
Since $\A_g$ is a domain and $g \A_g = \BBg = \theta (\BBh) = \theta (h) \A_g$, we have that $\theta (h)=\lambda g$ for some $\lambda\in\FF^*$.
\end{proof}

Now with  Lemma \ref{lem:isomRtoR}, the argument in the proof  \cite[Prop.~3.6]{AD97}  can be extended to
arbitrary fields, and  as a result,  we have the following. 
\begin{thm}\label{T:isos:A_h}
Let $g, h\in\FF[x]$.
\begin{enumerate}
\item[{\rm (i)}]  $\A_h$ is isomorphic to $\A_g$ if and only if there exist $\alpha, \beta, \nu \in\FF$, with $\alpha \nu \neq 0$ such that $\nu g(x)= h(\alpha x+\beta)$.   In particular, if $\A_h$ is isomorphic to $\A_g$,  then $g$ and $h$ have the same degree. 
\item[{\rm (ii)}]  Suppose  $\degg h \geq 1$. Let $\omega$ be an automorphism of $\A_h$. Then there exist $\alpha, \beta\in\FF$, with $\alpha\neq 0$,  and  $f(x)\in\FF[x]$ such that 
$$\omega (x)=\alpha x+\beta, \quad \omega (\hat y)=\alpha^{\degg h-1}\hat y+f(x), \quad \text{and} \quad h(\alpha x+\beta)=\alpha^{\degg h} h(x).$$
\end{enumerate}
\end{thm}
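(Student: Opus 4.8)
The plan is to follow the strategy of \cite[Prop.~3.6]{AD97}: recover the parameter polynomial from the ring structure, then read off the admissible images of the generators. The role of Lemma~\ref{lem:isomRtoR} is to provide the needed control on $\theta(h)$ over an arbitrary field, in place of the characteristic-zero input used in \cite{AD97}. The main tool I would use throughout is the filtration of $\A_h=\bigoplus_{i\ge0}\FF[x]\hat y^i$ by degree in $\hat y$ (and the analogous ones for $\A_g$ and for $\A_1$ by degree in $y$): since moving a polynomial in $x$ past $\hat y$ does not raise $\hat y$-degree, one has $\deg_{\hat y}(ab)=\deg_{\hat y}(a)+\deg_{\hat y}(b)$ for nonzero $a,b$, and leading $\hat y$-coefficients multiply.

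For (i), the degenerate cases $\degg h\le0$ are immediate from Lemma~\ref{lem:isomRtoR}: $h=0$ forces $g=0$, and $h\in\FF^{*}$ (a unit) forces $g\in\FF^{*}$, and then one may take $\alpha=1$. So assume $\degg h\ge1$ and let $\theta\colon\A_h\to\A_g$ be an isomorphism. Lemma~\ref{lem:isomRtoR} gives $\theta(h)=\lambda g$ with $\lambda\in\FF^{*}$; as $\theta$ is an algebra map, $\theta(h)=h(\theta(x))$, so $h(\theta(x))=\lambda g\in\FF[x]$. Multiplicativity of $\deg_{\hat y}$ and $\degg h\ge1$ then force $\theta(x)\in\FF[x]$, and $\theta(x)$ is nonconstant since $\theta$ is surjective; applying this also to $\theta^{-1}$ (and noting $\degg g\ge1$) gives $\theta^{-1}(x)\in\FF[x]$ nonconstant. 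Since $x=\theta^{-1}(\theta(x))$ is a composition of two polynomial substitutions, both $\theta(x)$ and $\theta^{-1}(x)$ have degree $1$; write $\theta(x)=\alpha x+\beta$, $\alpha\ne0$. Then $h(\alpha x+\beta)=\lambda g(x)$ is (i) with $\nu=\lambda$, and comparing degrees gives $\degg g=\degg h$. Conversely, given $\nu g(x)=h(\alpha x+\beta)$ with $\alpha\nu\ne0$, one checks directly that $x\mapsto\alpha x+\beta$, $\hat y\mapsto\tfrac{\nu}{\alpha}\tilde y$ respects the defining relation of $\A_h$ and has inverse $x\mapsto\alpha^{-1}(x-\beta)$, $\tilde y\mapsto\tfrac{\alpha}{\nu}\hat y$.

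For (ii), apply the above with $g=h$ to an automorphism $\omega$ (with $\degg h\ge1$): this gives $\omega(x)=\alpha x+\beta$, $\alpha\ne0$, and $h(\alpha x+\beta)=\lambda h(x)$, and comparing leading coefficients forces $\lambda=\alpha^{\degg h}$, the third displayed equation. Now write $u=\omega(\hat y)=\sum_i r_i h^i y^i$ inside $\A_1$ and apply $\omega$ to $[\hat y,x]=h$ to get $\alpha[u,x]=\alpha^{\degg h}h\in\FF[x]$. Since $[y^i,x]=iy^{i-1}$ (see \eqref{eq:A1com}), the part of $[u,x]$ of positive $y$-degree must vanish, which forces $r_i=0$ for every $i\ge2$ that is nonzero in $\FF$; then $[u,x]=r_1h$ yields $r_1=\alpha^{\degg h-1}$. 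Hence $u=\alpha^{\degg h-1}\hat y+f(x)+v$, where $f(x)\in\FF[x]$ and $v$ is a sum of terms $r_ih^iy^i$ with $i\ge2$ and $i=0$ in $\FF$; by Lemma~\ref{L:commcent}, $v\in\mathsf{C}_{\A_h}(x)$, and $v=0$ automatically when $\chara(\FF)=0$. To see $v=0$ in general, use bijectivity of $\omega$: the transported basis $\{\omega(x)^j\omega(\hat y)^i\}$ gives $\A_h=\bigoplus_{i\ge0}\FF[x]\,\omega(\hat y)^i$, so $\hat y=\sum_iq_i(x)\,\omega(\hat y)^i$; the leading $y$-terms of the powers $\omega(\hat y)^i$ occupy pairwise distinct $y$-degrees, so no cancellation occurs and $\deg_y\hat y=1$ forces $\deg_y\omega(\hat y)=1$. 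But a nonzero $v$ would make $\deg_y\omega(\hat y)\ge p\ge2$; hence $v=0$ and $\omega(\hat y)=\alpha^{\degg h-1}\hat y+f(x)$.

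I expect the main obstacle to be this last step in positive characteristic: excluding the ``hidden'' central correction $v$ in $\omega(\hat y)$, which the relation $[\hat y,x]=h$ alone cannot rule out because $\mathsf{C}_{\A_h}(x)\supsetneq\FF[x]$ when $\chara(\FF)=p>0$ (Lemma~\ref{L:commcent}). Using surjectivity of $\omega$ via the transported basis, together with multiplicativity of the $\hat y$-filtration, is what makes the argument go through; everything else is routine leading-coefficient bookkeeping (pinning down the exponents $\alpha^{\degg h}$ and $\alpha^{\degg h-1}$) and the straightforward verification of the converse in (i).
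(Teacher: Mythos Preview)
Your proposal is correct and follows precisely the route the paper indicates: the paper gives no detailed argument at all, merely stating that with Lemma~\ref{lem:isomRtoR} in hand the proof of \cite[Prop.~3.6]{AD97} extends to arbitrary fields, and your write-up is exactly a fleshed-out version of that strategy. In particular, your handling of the positive-characteristic obstacle in (ii)---ruling out the extra centralizer term $v\in\mathsf{C}_{\A_h}(x)$ by transporting the basis under $\omega$ and using multiplicativity of $y$-degree---is the natural way to make the \cite{AD97} argument go through once $\mathsf{C}_{\A_h}(x)\supsetneq\FF[x]$, and the rest (degenerate cases via Lemma~\ref{lem:isomRtoR}, forcing $\theta(x)$ affine by composing with $\theta^{-1}$, leading-coefficient bookkeeping for $\alpha^{\degg h}$ and $\alpha^{\degg h-1}$) is routine and correctly executed.
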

 
\begin{subsection}{Automorphisms of $\A_h$  \newline Definitions and the Decomposition}\end{subsection}

If $h\in\FF$,  the automorphism group of $\A_h$ is known \cite{wVDK53, Dix68, MakLim84} (see also the discussion in Sec. \ref{sec:automWeyl} below), so in what follows,  we assume $\deg h\geq 1$.   In view of Theorem \ref{T:isos:A_h}, we introduce the following definitions.     Let
\begin{equation}\label{eq:defP}  \mathbb P = \{(\alpha,\beta) \in \FF^* \times \FF \mid  h(\alpha x +\beta) = \alpha^{\degg h} h(x)\}. \end{equation} 
It is easy to verify that each pair $(\alpha,\beta) \in \mathbb P$ determines an  automorphism $\tau_{\alpha,\beta}$ of $\A_h$ whose values on $x$ and $\hat y$ are given by
\begin{equation} \label{eq:sigab} \tau_{\alpha,\beta}(x) = \alpha x + \beta, \qquad \tau_{\alpha,\beta}(\hat y) = \alpha^{\mathsf{deg}h-1}\hat y. \end{equation}  The pair $(\alpha^{-1},-\beta \alpha^{-1})$ belongs to $\mathbb P$ whenever $(\alpha,\beta)$ does, and $\tau_{\alpha,\beta}^{-1} = 
\tau_{\alpha^{-1}, -\beta\alpha^{-1}}$.

Each  $f\in\FF[x]\subseteq \A_h$ determines an automorphism $\phi_f$  of $\A_h$ defined by 
\begin{equation} \phi_f (x)=x, \qquad \phi_f (\hat y)= \hat y+f  \end{equation} 
and having inverse $\phi_{-f}$. Furthermore, $\{ \phi_f \mid f\in\FF[x]  \}$ is a subgroup of $\aut (\A_h)$, isomorphic to the additive group $\FF[x]$. One important example is the automorphism $\phi_{h'}$ with   $\phi_{h'} (x) = x$ and $\phi_{h'} (\hat y) = \hat y + h'$. 
The normality of  the element $h \in \A_h$ (see Lemma \ref{lem:h-factorsNormal}) implies that this automorphism has the property that 
\begin{equation}
ah = h \phi_{h'} (a)
\end{equation}
for all $a \in \A_h$  (compare (\ref{eqn:(yHat)(h)})). 
\medskip

\begin{thm}\label{T:autosA_hprod}
Suppose $\degg h \ge 1$, and let the set $\PP$ and the automorphisms $\tau_{\alpha,\beta}$ for $(\alpha,\beta) \in \PP$ be as in \eqref{eq:defP} and \eqref{eq:sigab}. 
\begin{itemize}
\item[{\rm (i)}] If $\omega$ is an automorphism of $\A_h$,
then there exist 
$(\alpha, \beta) \in \mathbb P$ and $f \in \FF[x]$ such that 
 $\omega = \phi_f \circ \tau_{\alpha, \beta}$.
 \item[{\rm (ii)}]  $\tau_{\alpha,\beta} = \phi_f$ for some $(\alpha,\beta) \in \mathbb P$ and $f \in \FF[x]$
 if and only if $\alpha = 1, \beta = 0$ and $f = 0$.       
  \item[{\rm (iii)}]  If $(\alpha,\beta) \in \mathbb P$,  $\alpha \neq 1$,  and  $\alpha^\ell = 1$  for some $\ell \ge 2$,  then $\tau_{\alpha,\beta}^\ell = \mathsf{id}_{\A_h}$.
 \item[{\rm (iv)}]
 The abelian subgroup $\{ \phi_f \mid f \in \FF[x]\}$, which we identify with \hbox{\rm ($\FF[x],+$)},   is a normal subgroup of $\aut(\A_h)$.
 \item[{\rm (v)}]  $\aut(\A_h) = \FF[x] \rtimes \tau_{\mathbb P}$, where $\tau_\mathbb P :=
 \{ \tau_{\alpha,\beta} \mid (\alpha,\beta) \in \mathbb P\}$ and $\tau_{\mathbb P}$ is a subgroup
 of $\aut(\A_h)$.  
 \end{itemize}
\end{thm}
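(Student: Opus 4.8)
The plan is to build Theorem~\ref{T:autosA_hprod} out of the structural data already assembled in Theorem~\ref{T:isos:A_h}(ii), together with the two families of automorphisms $\phi_f$ and $\tau_{\alpha,\beta}$ introduced just above the statement. For part~(i), I would start from an arbitrary automorphism $\omega$ of $\A_h$. By Theorem~\ref{T:isos:A_h}(ii) there are $\alpha\in\FF^*$, $\beta\in\FF$, and $f(x)\in\FF[x]$ with $\omega(x)=\alpha x+\beta$, $\omega(\hat y)=\alpha^{\degg h-1}\hat y+f(x)$, and $h(\alpha x+\beta)=\alpha^{\degg h}h(x)$; the last relation says precisely $(\alpha,\beta)\in\PP$, so $\tau_{\alpha,\beta}$ is defined. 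The idea is then to compute $\phi_g\circ\tau_{\alpha,\beta}$ on the generators for a suitable $g\in\FF[x]$ and match it with $\omega$: on $x$ both give $\alpha x+\beta$, and on $\hat y$ we get $\phi_g(\alpha^{\degg h-1}\hat y)=\alpha^{\degg h-1}\hat y+\alpha^{\degg h-1}g$. Choosing $g=\alpha^{1-\degg h}f$ makes the two maps agree on generators, hence $\omega=\phi_g\circ\tau_{\alpha,\beta}$; one should note that the verification that $(\alpha,\beta)\in\PP$ really does yield a well-defined automorphism $\tau_{\alpha,\beta}$ is exactly the "easy to verify" remark preceding the theorem, so this is routine.

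For part~(ii), suppose $\tau_{\alpha,\beta}=\phi_f$. Evaluating on $x$ forces $\alpha x+\beta=x$, so $\alpha=1$ and $\beta=0$; evaluating on $\hat y$ then forces $\alpha^{\degg h-1}\hat y=\hat y+f$, so $f=0$. The converse is immediate since $\tau_{1,0}=\mathsf{id}_{\A_h}=\phi_0$. For part~(iii), observe that $\tau_{\alpha,\beta}^{\ell}$ sends $x\mapsto \alpha^\ell x + \beta(\alpha^{\ell-1}+\cdots+\alpha+1)$ and $\hat y\mapsto \alpha^{\ell(\degg h-1)}\hat y$; once $\alpha^\ell=1$ with $\alpha\neq1$ the geometric sum $\alpha^{\ell-1}+\cdots+1$ equals $(\alpha^\ell-1)/(\alpha-1)=0$, and $\alpha^{\ell(\degg h-1)}=1$, so $\tau_{\alpha,\beta}^\ell$ fixes both generators and is the identity. (A small point in positive characteristic: the geometric-sum identity still holds because $\alpha-1$ is a unit; I should phrase it via $(\alpha-1)(\alpha^{\ell-1}+\cdots+1)=\alpha^\ell-1=0$ rather than dividing.)

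For part~(iv), I already know $\{\phi_f\mid f\in\FF[x]\}$ is a subgroup isomorphic to $(\FF[x],+)$; normality follows by conjugating a generic $\phi_f$ by a generic $\omega=\phi_g\circ\tau_{\alpha,\beta}$ from part~(i) and checking that the result is again some $\phi_{f_1}$. Concretely, $\tau_{\alpha,\beta}\phi_f\tau_{\alpha,\beta}^{-1}$ fixes $x$ and sends $\hat y\mapsto \hat y+\alpha^{\degg h-1}f(\alpha^{-1}(x-\beta))$, hence equals $\phi_{f_2}$ with $f_2(x)=\alpha^{\degg h-1}f(\alpha^{-1}x-\alpha^{-1}\beta)\in\FF[x]$; conjugating further by $\phi_g$ does nothing since the $\phi$'s commute, so conjugation preserves the subgroup. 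For part~(v): part~(i) shows every automorphism is a product $\phi_f\tau_{\alpha,\beta}$, part~(ii) (or rather its uniqueness consequence) shows the representation is unique, so $\aut(\A_h)=\{\phi_f\mid f\in\FF[x]\}\cdot\tau_\PP$ with trivial intersection; part~(iv) gives normality of the $\phi$-subgroup; and $\tau_\PP$ is a subgroup because $\tau_{\alpha,\beta}\tau_{\alpha',\beta'}=\tau_{\alpha\alpha',\alpha\beta'+\beta}$ (a direct generator computation, and $(\alpha\alpha',\alpha\beta'+\beta)\in\PP$ since $\PP$ is closed under this composition, which follows from the defining relation in \eqref{eq:defP}), with $\tau_{\alpha,\beta}^{-1}=\tau_{\alpha^{-1},-\beta\alpha^{-1}}$ as already recorded. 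These facts together are exactly the definition of the internal semidirect product $\FF[x]\rtimes\tau_\PP$.

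The only genuinely delicate point is making sure $\tau_\PP$ is actually closed under composition and inversion, i.e.\ that $\PP$ is a subgroup of $\FF^*\ltimes\FF$ under $(\alpha,\beta)\cdot(\alpha',\beta')=(\alpha\alpha',\alpha\beta'+\beta)$; this is where one must use that $h(\alpha x+\beta)=\alpha^{\degg h}h(x)$ is stable under composing two such substitutions, and that $\degg h$ is unchanged. Everything else is a sequence of evaluations on the two generators $x,\hat y$, which determine an endomorphism of $\A_h$ since they generate; I would present these computations compactly rather than in full detail, flagging that the heart of the theorem is Theorem~\ref{T:isos:A_h}(ii), which has already been established.
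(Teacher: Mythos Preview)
Your proposal is correct and follows essentially the same route as the paper: part~(i) is read off from Theorem~\ref{T:isos:A_h}(ii), parts~(ii)--(iv) are direct generator computations, and part~(v) is assembled from~(i), (ii), (iv) together with the verification that $\tau_{\PP}$ is closed under composition. Two small slips to fix when you write it up: the composition law is $\tau_{\alpha,\beta}\circ\tau_{\alpha',\beta'}=\tau_{\alpha\alpha',\,\alpha'\beta+\beta'}$ (not $\alpha\beta'+\beta$; remember $\tau_{\alpha,\beta}$ acts on the algebra, so $\tau_{\alpha,\beta}(\alpha'x+\beta')=\alpha'(\alpha x+\beta)+\beta'$), and the conjugation formula you give, $\hat y\mapsto\hat y+\alpha^{\degg h-1}f(\alpha^{-1}(x-\beta))$, is actually $\tau_{\alpha,\beta}^{-1}\phi_f\tau_{\alpha,\beta}$ rather than $\tau_{\alpha,\beta}\phi_f\tau_{\alpha,\beta}^{-1}$; either one establishes normality, so the argument is unaffected.
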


\begin{proof}  Part (i) is immediate from Theorem \ref{T:isos:A_h}. 
If $\tau_{\alpha,\beta} = \phi_f$ for some $(\alpha,\beta) \in \mathbb P$ and $f \in \FF[x]$, then
$\alpha x + \beta = \tau_{\alpha,\beta}(x) = \phi_f(x) = x$, which implies  $\alpha = 1$
and $\beta = 0$.  Then, $\hat y = \alpha^{\degg h-1}\hat y = \tau_{\alpha,\beta}(\hat y) = \phi_f(\hat y) = \hat y+f(x)$, to
force $f = 0$.   The converse is clear, since 
$\tau_{1,0} = \mathsf{id}_{\A_h} = \phi_0$.  
 
Suppose $(\alpha,\beta), (\gamma,\varepsilon) \in \mathbb P$.  Then 
$(\alpha \gamma, \beta\gamma + \varepsilon) \in \mathbb P$, as
$$h(\alpha\gamma x +\beta\gamma + \varepsilon) = h(\gamma (\alpha x + \beta) +\varepsilon) =  \gamma^{\mathsf{deg}h} h(\alpha x + \beta) 
= (\alpha \gamma)^{\mathsf{deg}h} h(x).$$    Moreover, 
\begin{equation}\label{eq:sigmamult} \tau_{\alpha,\beta}\circ \tau_{\gamma,\varepsilon} =  \tau_{\alpha \gamma,  \beta\gamma+\varepsilon}.\end{equation}
Consequently,  $\tau_\mathbb P = \{ \tau_{\alpha,\beta} \mid (\alpha,\beta) \in \mathbb P\}$ is a subgroup of $\aut(\A_h)$.  
Now \eqref{eq:sigmamult} implies  $\tau_{\alpha, \beta}^\ell = \tau_{\alpha^\ell, ( 1+\alpha + \cdots +\alpha^{\ell- 1})\beta}$ for all $\ell \geq 1$.
Hence, if $\alpha^\ell = 1$ and $\alpha \neq 1$, then  $\tau_{\alpha, \beta}^\ell =  \tau_{1, 0} = \mathsf{id}_{\A_h}$.

Direct calculation shows that 
\begin{equation}\label{eq:autnorm}   
\tau_{\alpha,\beta}^{-1}\circ \phi_f \circ \tau_{\alpha, \beta}(x) = x,  \ \ \  
\tau_{\alpha,\beta}^{-1}\circ \phi_f \circ \tau_{\alpha, \beta}(\hat y) = \hat y+ \alpha^{\mathsf{deg}h-1} f\big(\alpha^{-1}(x-\beta)
\big).\end{equation}
Thus,  $\tau_{\alpha,\beta}^{-1}\circ \phi_f \circ \tau_{\alpha, \beta} = \phi_g$,  where
$g(x) = \alpha^{\mathsf{deg}h-1} f\big(\alpha^{-1}(x-\beta)
\big).$  Since every automorphism is a product of automorphisms  in the subgroups $\FF[x]$ and $\tau_{\mathbb P}$, we have that the subgroup $\FF[x]$ is normal in $\aut(\A_h)$. 
Part (v) follows then, since the two subgroups have trivial intersection by (ii). 
\end{proof}
\medskip

The automorphism group $\aut(\A_h)$ will be 
completely determined once we  establish conditions for a pair
$(\alpha, \beta)$ to belong to $\mathbb P$.  This will of course depend on the polynomial $h$.

\begin{subsection}{The Subgroup $\tau_\PP$} \end{subsection} 

In the following, we adopt the notation 
\begin{equation}\label{eq:Gdef} \G = \{\nu \in \FF \mid (1,\nu) \in \PP\} \quad \text{and} \quad \tau_{1,\G} = \{ \tau_{1, \nu} \mid \nu \in \G \}. \end{equation}

\begin{lemma}\label{L:PandG} Suppose $\degg h\geq 1$. Let the set $\PP$ and 
the automorphisms $\tau_{\alpha,\beta}$ for $(\alpha,\beta) \in \PP$ be as in \eqref{eq:defP} and
\eqref{eq:sigab}.   
\begin{itemize}
\item[{\rm (1)}]   $\G$ is a finite subgroup of $(\FF,+)$,
which is equal to $\{0\}$ when $\chara(\FF) = 0$.   
\item[{\rm (2)}]   If $(\alpha,\beta) \in \PP$ and $(\alpha,\tilde \beta) \in \PP$, then 
$\tau_{\alpha,\tilde \beta} = \tau_{\alpha,\beta}\circ \tau_{1,\nu}$ where $\nu = \tilde \beta - \beta  \in \G$.
In particular, $\tilde \beta = \beta$ must hold when $\G = \{0\}$.  
\item[{\rm (3)}]   If $(\alpha,\beta) \in \PP$ and $\nu \in \G$,   then
$$\tau_{\alpha,\beta}^{-1}\circ \tau_{1,\nu}\circ \tau_{\alpha,\beta} = \tau_{1,\alpha\nu},$$ so $\alpha \nu \in \G$.  
\item[{\rm (4)}] 
$\mathsf{N}: = \FF[x] \rtimes \tau_{1,\G}$ is a normal subgroup of $\mathsf{Aut}_\FF(\A_h)$, which equals
$\FF[x]$ when $\chara(\FF) = 0$.   
\end{itemize}
\end{lemma}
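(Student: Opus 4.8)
The plan is to take the four assertions in order. Part (1) rests on elementary facts about ``periodic'' polynomials, and parts (2)--(4) are formal consequences of the composition rule \eqref{eq:sigmamult} and Theorem \ref{T:autosA_hprod}.

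For (1), unwinding \eqref{eq:defP} and \eqref{eq:Gdef} shows that $\nu \in \G$ if and only if $h(x+\nu)=h(x)$ in $\FF[x]$. From this characterization it is immediate that $0 \in \G$, that $\G$ is closed under addition, and that $\G$ is closed under negation (replace $x$ by $x-\nu$), so $\G$ is a subgroup of $(\FF,+)$. To see that $\G$ is finite, evaluate $h(x+\nu)=h(x)$ at $x=0$: every $\nu \in \G$ is a root of $h(x)-h(0)$, which is a nonzero polynomial of degree $\degg h$ since $\degg h \ge 1$; hence $|\G| \le \degg h$. When $\chara(\FF)=0$, comparing the coefficients of $x^{\degg h - 1}$ in $h(x+\nu)=h(x)$ yields $(\degg h)\,c\,\nu = 0$, where $c \ne 0$ is the leading coefficient of $h$; since $\degg h$ is a nonzero element of $\FF$, this forces $\nu = 0$ and $\G = \{0\}$.

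For (2), \eqref{eq:sigmamult} gives $\tau_{\alpha,\beta}\circ\tau_{1,\nu} = \tau_{\alpha,\beta+\nu}$, which equals $\tau_{\alpha,\tilde\beta}$ precisely when $\nu = \tilde\beta - \beta$; it then remains only to confirm $\nu \in \G$, and this follows by equating the relations $h(\alpha x + \beta) = \alpha^{\degg h}h(x) = h(\alpha x + \tilde\beta)$ coming from $(\alpha,\beta),(\alpha,\tilde\beta)\in\PP$ and substituting $x \mapsto \alpha^{-1}(x-\beta)$. The last clause of (2) is then immediate from (1). For (3), two applications of \eqref{eq:sigmamult} together with $\tau_{\alpha,\beta}^{-1} = \tau_{\alpha^{-1},-\beta\alpha^{-1}}$ give $\tau_{\alpha,\beta}^{-1}\circ\tau_{1,\nu}\circ\tau_{\alpha,\beta} = \tau_{1,\alpha\nu}$; that $\alpha\nu \in \G$ then follows either from the uniqueness of the decomposition in Theorem \ref{T:autosA_hprod}(i)--(ii) (the left-hand side is an automorphism, and its having the form $\tau_{1,\alpha\nu}$ forces $(1,\alpha\nu)\in\PP$) or directly, by substituting $x \mapsto x+\nu$ into $h(\alpha x+\beta) = \alpha^{\degg h}h(x)$ and using $\nu \in \G$.

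For (4), I would first check that $\mathsf{N}$ is a subgroup: by \eqref{eq:sigmamult} we have $\tau_{1,\nu}\circ\tau_{1,\mu} = \tau_{1,\nu+\mu}$, so $\tau_{1,\G}$ is a subgroup of $\tau_\PP$ by (1), and it meets the normal subgroup $\FF[x]=\{\phi_f\}$ trivially by Theorem \ref{T:autosA_hprod}(ii),(iv); hence $\FF[x]\cdot\tau_{1,\G}$ is the internal semidirect product $\FF[x]\rtimes\tau_{1,\G}$. Since $\aut(\A_h) = \FF[x]\rtimes\tau_\PP$ is generated by the $\phi_g$ and the $\tau_{\alpha,\beta}$ (a set closed under inversion), normality of $\mathsf{N}$ reduces to showing that conjugation by each of these preserves $\mathsf{N}$: conjugation by either carries $\FF[x]$ into $\FF[x]\subseteq\mathsf{N}$ by Theorem \ref{T:autosA_hprod}(iv); a short direct computation shows that $\phi_g\circ\tau_{1,\nu}\circ\phi_g^{-1}$ lies in $\FF[x]\cdot\tau_{1,\nu}\subseteq\mathsf{N}$; and part (3), applied to $(\alpha^{-1},-\beta\alpha^{-1})\in\PP$, shows $\tau_{\alpha,\beta}\circ\tau_{1,\nu}\circ\tau_{\alpha,\beta}^{-1} = \tau_{1,\alpha^{-1}\nu}\in\tau_{1,\G}$. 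Finally, when $\chara(\FF)=0$ we have $\G=\{0\}$ by (1), so $\tau_{1,\G}$ is trivial and $\mathsf{N} = \FF[x]$. I do not expect a real obstacle here: the only idea that is not bookkeeping is the conjugation identity of (3), which is precisely what makes $\tau_{1,\G}$ stable under $\tau_\PP$-conjugation in (4); everything else is routine manipulation with \eqref{eq:sigmamult}.
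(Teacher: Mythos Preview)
Your proof is correct and follows essentially the same approach as the paper: parts (2)--(4) are handled via the composition rule \eqref{eq:sigmamult} and the normality of $\FF[x]$ from Theorem~\ref{T:autosA_hprod} in both treatments. The only notable difference is in (1): the paper argues finiteness by passing to a root $\lambda \in \overline{\FF}$ of $h$ and observing that $\{\lambda+\nu \mid \nu\in\G\}$ consists of roots, then deduces $\G=\{0\}$ in characteristic~$0$ from the general fact that $(\FF,+)$ has no nontrivial finite subgroups; your argument via roots of $h(x)-h(0)$ and comparison of the $x^{\degg h-1}$ coefficients is a bit more direct and avoids the algebraic closure, but the content is the same.
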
 

\begin{proof}  (1) It follows from \eqref{eq:sigmamult}  that  $\tau_{1,\nu}\circ \tau_{1,\tilde \nu} = \tau_{1,\nu + \tilde \nu}$ whenever
$\nu,\tilde \nu \in \G$, so $\G$ is a subgroup of $(\FF,+)$.   Let $\overline{\FF}$ denote the algebraic closure of $\FF$, and let $\lambda \in \overline \FF$ be a root of $h(x)$.  Then $\{ \lambda + \nu \mid \nu \in \mathbb G \}$ consists of roots of $h(x)$, so it is evident that $\mathbb G$ is finite provided $h\notin\FF$.
When  $\chara (\FF) = 0$, then $\mathbb G = \{ 0 \}$,  as this is the only finite subgroup of $(\FF,+)$.   

(2)  Assume $(\alpha,\beta) \in \PP$ and $(\alpha, \tilde \beta) \in \PP$.    Because  $\tau_\PP$ is a group,
$$\tau_{\alpha,\beta}^{-1} \circ \tau_{\alpha, \tilde \beta} =  \tau_{\alpha^{-1}, - \alpha^{-1}\beta} \circ \tau_{\alpha, \tilde \beta} = \tau_{1,\tilde \beta-\beta} \in \tau_\PP.$$
Thus  $\nu: = \tilde \beta - \beta \in \G$.

(3) Suppose $(\alpha,\beta), (1,\nu) \in \PP$.    Then since $\tau_{\alpha, \beta}^{-1} = \tau_{\alpha^{-1}, - \alpha^{-1} \beta}$, \eqref{eq:sigmamult} gives that 
$$\tau_{\alpha,\beta}^{-1}\circ\tau_{1,\nu}\circ \tau_{\alpha,\beta} = \tau_{1,\alpha\nu},$$ 
as desired. 

(4)  From \eqref{eq:autnorm}  we know that
$$\tau_{\alpha,\beta}^{-1}\circ \phi_f \circ \tau_{\alpha,\beta} = \phi_g,$$ where $g = \alpha^{\degg h-1}f\big(\alpha^{-1}(x-\beta)\big),$
which implied the normality of the subgroup $\{\phi_f \mid f \in \FF[x]\}$ in $\aut(\A_h).$ (We identified this subgroup  with $\FF[x]$.)
 Part (3) shows that conjugation by the elements $\tau_{\alpha,\beta}$ for
$(\alpha,\beta)\in \PP$ leaves $\tau_{1,\G} = \{ \tau_{1,\nu} \mid \nu \in \G\}$ invariant.  Hence,
$\FF[x] \rtimes \tau_{1,\G}$ a normal subgroup of $\aut(\A_h)$.   Since $\tau_{1,\G}$
just consists of $\tau_{1,0} = \mathsf{id}_{\A_h}$ whenever $\G = \{0\}$,   this normal subgroup equals  $\FF[x]$ when $\G = \{0\}$ (for example, when $\chara(\FF) = 0$).     
\end{proof}   \smallskip

\begin{remark}\label{R:coprime}
From (3) of Lemma \ref{L:PandG}, it follows that $\tau_{1, \G}$ is a normal subgroup of $\tau_\PP$ and that $\tau_\PP / \tau_{1, \G}$ acts on $\G$ via $(\tau_{\alpha, \beta} \tau_{1, \G}). \nu = \alpha \nu$.  If $\G \setminus \{ 0 \}$ is nonempty, then this formula shows that $\tau_\PP / \tau_{1, \G}$ acts faithfully on $\G \setminus \{ 0 \}$, and therefore $| \G|-1$ is divisible by $| \tau_\PP / \tau_{1, \G} |$.
\end{remark}
\smallskip

The group $\FF[x] \rtimes \tau_{1,\G}$ may not be all of $\aut(\A_h)$,  and in that situation,   there exists some $(\alpha,\beta) \in \PP$ with $\alpha \neq 1$ so that $\tau_{\alpha,\beta} \in \aut(\A_h)$.   The next result draws conclusions in that case.   

\begin{thm}\label{T:Pinfo}   Assume $h$ has $k$ distinct roots in $\overline \FF$ for $k \geq 1$.
\smallskip \smallskip

\noindent  {\rm (Case $k = 1$)} \  Let $\lambda$ be the unique root of $h$ in $\overline \FF$. 
\begin{enumerate}
\item [{\rm (a)}]  If  $\lambda \in \FF$,  then $\PP = \{ (\alpha, (1 - \alpha) \lambda ) \mid \alpha \in \FF^* \}$,  $\tau_\PP \cong \FF^*$, and  $\aut(\A_h) = \FF[x] \rtimes \FF^*$, where for all $f \in \FF[x]$ and $\alpha \in \FF^*$,   
$$\tau_{\alpha, (1 - \alpha) \lambda}^{-1} \circ \phi_f \circ \tau_{\alpha, (1-\alpha)\lambda} =  \phi_g \ \ \ \hbox{ with }$$
 $$g(x) = \alpha^{\degg h-1}f(\alpha^{-1}x-(\alpha^{-1}-1)\lambda).$$  
 \item [{\rm (b)}]  If $\lambda \notin \FF$,  then $\aut(\A_h) = \FF[x]$.
\end{enumerate} 
\noindent  {\rm (Case $k \geq 2$)} \  The group  $\tau_\PP/\tau_{1,\G}$ is a finite cyclic group.
 In particular,  when $\tau_{\PP} \neq \tau_{1,\G}$, then  $\tau_{\PP} =  \tau_{1,\G} \rtimes \langle \tau_{\alpha,\beta} \rangle$,  for some $(\alpha,\beta) \in \PP$ with $\alpha \neq 1$ such that  either $\alpha^{k-1} = 1$ or $\alpha^k = 1$, 
and  $\tau_{\alpha,\beta}^{-1}\circ \tau_{1,\nu} \circ \tau_{\alpha,\beta} = \tau_{1,\alpha \nu}$ for all $\nu \in \G$.
Thus, $\aut(\A_h) \cong \mathsf{N} \rtimes \langle \tau_{\alpha,\beta} \rangle$ where
$\mathsf{N} = \FF[x] \rtimes \tau_{1,\G}$.    
\end{thm}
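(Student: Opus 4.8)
The plan is to analyze $\PP$ via its action on the roots of $h$ in $\overline{\FF}$. First I would observe that if $(\alpha,\beta)\in\PP$, then the relation $h(\alpha x+\beta)=\alpha^{\degg h}h(x)$ forces the affine map $x\mapsto \alpha^{-1}(x-\beta)$ to permute the multiset of roots of $h$ (preserving multiplicities); in particular it permutes the set $S=\{\lambda_1,\dots,\lambda_k\}$ of distinct roots. Conversely, whenever an affine substitution preserves $h$ up to the scalar $\alpha^{\degg h}$, the pair $(\alpha,\beta)$ lies in $\PP$. Thus I get a group homomorphism $\tau_\PP\to \mathrm{Sym}(S)$, and its kernel consists of those $\tau_{\alpha,\beta}$ fixing every root; since an affine map fixing $\geq 2$ points is the identity, for $k\geq 2$ the kernel is exactly $\{\tau_{\alpha,\beta}\in\tau_\PP : \alpha=1\}=\tau_{1,\G}$, by Lemma~\ref{L:PandG}(2). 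This already shows $\tau_\PP/\tau_{1,\G}$ embeds in $\mathrm{Sym}(S)$, hence is finite.

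For Case $k=1$: here $h(x)=\mu(x-\lambda)^{\degg h}$ for $\mu\in\FF^*$ (the single root, with full multiplicity). If $\lambda\in\FF$, then for every $\alpha\in\FF^*$ the substitution $x\mapsto\alpha x+(1-\alpha)\lambda$ sends $x-\lambda\mapsto \alpha(x-\lambda)$, so $h(\alpha x+(1-\alpha)\lambda)=\alpha^{\degg h}h(x)$ and $(\alpha,(1-\alpha)\lambda)\in\PP$; conversely any $(\alpha,\beta)\in\PP$ must send the unique root $\lambda$ to itself, i.e. $\alpha^{-1}(x-\beta)$ fixes $\lambda$, giving $\beta=(1-\alpha)\lambda$. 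Hence $\PP=\{(\alpha,(1-\alpha)\lambda)\mid\alpha\in\FF^*\}$, and by \eqref{eq:sigmamult} the map $\alpha\mapsto\tau_{\alpha,(1-\alpha)\lambda}$ is an isomorphism $\FF^*\cong\tau_\PP$. Combined with Theorem~\ref{T:autosA_hprod}(v) this gives $\aut(\A_h)=\FF[x]\rtimes\FF^*$, and the explicit conjugation formula for $g(x)$ is just \eqref{eq:autnorm} with $\beta=(1-\alpha)\lambda$. If $\lambda\notin\FF$: any $(\alpha,\beta)\in\PP$ forces $\alpha^{-1}(\lambda-\beta)=\lambda$, i.e. $\beta=(1-\alpha)\lambda$; for $\beta\in\FF$ with $\lambda\notin\FF$ this requires $\alpha=1$ and $\beta=0$. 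Hence $\PP=\{(1,0)\}$, so $\tau_\PP$ is trivial and $\aut(\A_h)=\FF[x]$ by Theorem~\ref{T:autosA_hprod}(v); note $\G=\{0\}$ here too since $\G\subseteq\{\nu:\lambda+\nu\text{ is a root}\}=\{0\}$.

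For Case $k\geq 2$: I already have $\tau_\PP/\tau_{1,\G}$ finite. To see it is cyclic, I would use Lemma~\ref{L:PandG}(2), which shows the coset of $\tau_{\alpha,\beta}$ is determined by $\alpha$ alone, so the assignment $\tau_{\alpha,\beta}\tau_{1,\G}\mapsto\alpha$ is a well-defined injective homomorphism from $\tau_\PP/\tau_{1,\G}$ into $\FF^*$; a finite subgroup of $\FF^*$ is cyclic. Next, to pin down the order of a generator $\alpha$: among the $k\geq 2$ distinct roots $\lambda_1,\dots,\lambda_k$, the affine map $\sigma(x)=\alpha^{-1}(x-\beta)$ permutes them, and if $\alpha\neq 1$ then $\sigma$ has a unique fixed point $c=\beta/(1-\alpha)$ in $\overline\FF$. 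The nonzero scalars $\{\lambda_i-c\}$ are permuted (as a set) by multiplication by $\alpha^{-1}$; if $c$ is itself a root, there are $k-1$ such nonzero values and $\alpha^{k-1}=1$ (the cyclic subgroup generated by $\alpha^{-1}$ acts freely on a set of size $k-1$, forcing the order of $\alpha$ to divide $k-1$); if $c$ is not a root, there are $k$ of them and $\alpha^k=1$. Finally, $\langle\tau_{\alpha,\beta}\rangle\cap\tau_{1,\G}=\{\mathsf{id}\}$ since a nontrivial power $\tau_{\alpha,\beta}^\ell$ has first coordinate $\alpha^\ell\neq 1$ unless it is the identity; combined with normality of $\tau_{1,\G}$ in $\tau_\PP$ (Lemma~\ref{L:PandG}(3) / Remark~\ref{R:coprime}) this yields $\tau_\PP=\tau_{1,\G}\rtimes\langle\tau_{\alpha,\beta}\rangle$, and then $\aut(\A_h)=\FF[x]\rtimes\tau_\PP=(\FF[x]\rtimes\tau_{1,\G})\rtimes\langle\tau_{\alpha,\beta}\rangle=\mathsf{N}\rtimes\langle\tau_{\alpha,\beta}\rangle$ via Theorem~\ref{T:autosA_hprod}(v), with the conjugation rule $\tau_{\alpha,\beta}^{-1}\tau_{1,\nu}\tau_{\alpha,\beta}=\tau_{1,\alpha\nu}$ coming from Lemma~\ref{L:PandG}(3).

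The main obstacle I anticipate is the bookkeeping in Case $k\geq 2$ distinguishing the $\alpha^{k-1}=1$ versus $\alpha^k=1$ alternatives cleanly — i.e. carefully arguing that the cyclic group generated by $\alpha^{-1}$ acts \emph{freely} on the translated root set $\{\lambda_i-c\}$ (freeness because $\alpha^{-j}(\lambda_i-c)=\lambda_i-c$ with $\lambda_i\neq c$ forces $\alpha^j=1$), and handling the possibility that the fixed point $c$ may or may not coincide with one of the roots. One should also be slightly careful that everything takes place over $\overline\FF$ while $\alpha,\beta\in\FF$, but since $\alpha$ is a root of unity and $\beta$ is determined rationally by $\alpha$ and the coefficients of $h$, no difficulty arises. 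The rest is assembling the semidirect product decompositions, which follow formally from Theorem~\ref{T:autosA_hprod} and Lemma~\ref{L:PandG}.
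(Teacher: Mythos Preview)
Your approach is essentially the paper's, and the argument is sound except for one slip: in Case $k\geq 2$ you assert that the kernel of $\tau_\PP\to\mathrm{Sym}(S)$ is $\tau_{1,\G}$. But a nontrivial translation $x\mapsto x-\nu$ with $\nu\in\G\setminus\{0\}$ fixes \emph{no} point of $\overline\FF$, so it acts on $S$ as a fixed-point-free permutation; your own remark that an affine map fixing two distinct points is the identity actually shows the kernel is $\{\tau_{1,0}\}$, not $\tau_{1,\G}$. This does not damage the argument --- it strengthens it: $\tau_\PP$ itself then embeds in $\mathrm{Sym}(S)$ and is finite, hence so is $\tau_\PP/\tau_{1,\G}$, and your subsequent injection $\tau_{\alpha,\beta}\tau_{1,\G}\mapsto\alpha$ into $\FF^*$ gives cyclicity exactly as written.

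The paper reaches finiteness by a slightly different route: it uses the projection $\tau_\PP\to\FF^*$, $\tau_{\alpha,\beta}\mapsto\alpha$, directly (this is the map whose kernel genuinely is $\tau_{1,\G}$), and shows the image is finite by first arguing that any $(\alpha,\beta)\in\PP$ with $\alpha$ not a root of unity would force $k=1$. Your fixed-point argument for the $\alpha^{k-1}=1$ versus $\alpha^k=1$ dichotomy --- shifting the roots by $c=\beta/(1-\alpha)$ and letting $\langle\alpha^{-1}\rangle$ act freely on the nonzero values $\lambda_i-c$ --- is a clean repackaging of the paper's direct orbit-size computation (orbits of $\sigma_{\alpha,\beta}$ on $S$ have size $1$ or $\ell$, with at most one singleton) and is correct.
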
 

\begin{proof}  Assume $(\alpha,\beta) \in \PP$.  By the definition of $\PP$, the affine bijection $\sigma_{\alpha,\beta}$ of $\overline \FF$ given by  $\sigma_{\alpha,\beta}(\lambda) = \alpha \lambda + \beta$ permutes the roots of $h(x)$ in such a way that the corresponding multiplicities are preserved.   Thus $\lambda + \nu$ is a root of $h(x)$ whenever $\lambda$ is a root of $h(x)$ and $\nu \in \mathbb G$, so it follows that $\mathbb G = \{ 0 \}$ when $k = 1$.

When $h(x)$ has the form $h(x)=\gamma(x-\lambda)^n$ with $\lambda\in\FF$,  then  $(\alpha,(1-\alpha) \lambda) \in \mathbb P$ for any $\alpha \in \FF^*$, as $h(\alpha x+ (1-\alpha)\lambda) = \gamma (\alpha x - \alpha \lambda)^n = \alpha^n \gamma (x-\lambda)^n = \alpha^n h(x)$.     Conversely, if   $(\alpha, \xi) \in \mathbb P$, for some $\xi$, then $\xi = (1 - \alpha)\lambda$ must hold  because $(\alpha, (1 - \alpha)\lambda) \in \mathbb P$ and $\mathbb G = \{ 0 \}$.   Since $\tau_{\alpha, (1-\alpha)\lambda}\circ \tau_{\mu,(1-\mu)\lambda} = \tau_{\alpha\mu, (1-\alpha\mu) \lambda}$, we may identify the group $\tau_\PP$ with $\FF^*$ in this case.   Thus, $\aut(\A_h) = \FF[x] \rtimes \FF^*$.   The product formula appearing in (a)  follows from \eqref{eq:autnorm}. Hence, the theorem holds when $k = 1$ and $\lambda\in\FF$. 

Suppose now that $k = 1$ and $\lambda\notin\FF$. Then $\sigma_{\alpha,\beta}(\lambda) = \lambda$ whenever $(\alpha, \beta) \in \PP$, so that $(1-\alpha)\lambda=\beta$. If $\alpha\neq 1$ then $\lambda = \beta/(1-\alpha)\in\FF$, which contradicts our hypothesis. Thus, $\alpha=1$ and $\beta=0$, which proves that $\tau_{\PP}$ is trivial and $\aut(\A_h) = \FF[x]$ in this case.

We now assume $k \ge 2$.  Suppose $\lambda\in \overline \FF$ is a root of $h(x)$.  Orbits under the $\sigma_{\alpha, \beta}$ are finite, so if $(\alpha, \beta) \in \PP$, there must be a minimal $j \geq 1$ so that $\sigma_{\alpha,\beta}^j(\lambda) = \lambda$.   It follows that $\lambda = \alpha^j \lambda +(1+\alpha+\cdots + \alpha^{j-1})\beta$; that is,  $(1-\alpha^j)\lambda = (1+\alpha+\cdots + \alpha^{j-1})\beta$.  If   $\alpha$ is not a $j$th root of 1, then we  obtain $\lambda = \beta/(1-\alpha)$.     Since the root $\lambda$ was chosen arbitrarily,   this shows that if $(\alpha,\beta) \in \PP$ for some $\alpha$ which is not a root of unity, then $h(x)$ has a unique root $\lambda = \frac{\beta}{1-\alpha} \in \FF$, and  $h(x) = \gamma(x-\lambda)^n$ for some $\gamma \in \FF^*$ and $n\geq 1$.

Assume that $\tau_{\PP} \neq \tau_{1,\G}$ and that $(\alpha, \beta) \in \PP$ with $\alpha$  a primitive $\ell$th root of unity for some $\ell \geq 2$.   We want to show that $\ell$ divides $k$ or $k-1$.   As before, let  $\lambda\in\overline{\FF}$ be a root of $h$, and suppose the orbit of $\lambda$ under the action of the cyclic group $\langle \sigma_{\alpha, \beta} \rangle$ generated by $\sigma_{\alpha, \beta}$ has cardinality $j$. We will argue that $j\in\{ 1, \ell\}$. The integer $j\geq 1$ is the smallest positive integer such that $\sigma^{j}_{\alpha, \beta}(\lambda)=\lambda$, which is equivalent to 
\begin{equation*}
(\alpha^j-1) \lambda+\beta(1+\alpha+\cdots+\alpha^{j-1})=0,
\end{equation*}
as we have seen above.   If $j<\ell$, then $\alpha^{j}\neq 1$, so we can divide by $\alpha^j-1$ and get  $\lambda=\frac{\beta}{1-\alpha}$ and $j=1$. Now notice that $\sigma^{\ell}_{\alpha, \beta}(\lambda)=\alpha^\ell \lambda+ \left(\frac{1-\alpha^\ell}{1-\alpha}\right)\beta=\lambda$, so $j\leq\ell$. Thus $j\in\{ 1, \ell\}$.

Hence, the orbits of this action of $\langle \sigma_{\alpha, \beta} \rangle$ on the roots of $h(x)$  have size either $1$ or $\ell$. Let $r$ be the number of orbits of size $1$ and $q$ the number of orbits of size $\ell$. It follows that $k=r+q\ell$, so $\ell$ divides $k-r$. If the orbits of two roots $\lambda$ and $ \tilde \lambda$ have size $1$,  then $\lambda=\frac{\beta}{1-\alpha}=\tilde \lambda$, so $r \leq1$.   Thus, either $r = 0$ and $\ell$ divides $k$ or $r = 1$ and $\ell$ divides $k-1$.

By  \eqref{eq:sigmamult}, the projection map $\psi: \tau_\PP \rightarrow \FF^*$ given by $\psi(\tau_{\mu,\nu}) = \mu$ is a group homomorphism with kernel $\tau_{1,\G}$. The image is  a finite subgroup of $\FF^*$, since $\FF^*$ has only finitely many $k$ and $k-1$ roots of unity.  As finite subgroups of $\FF^*$ are cyclic, we have that $\tau_\PP/\tau_{1,\G}$ is generated by a coset $\tau_{\alpha,\beta}\, \tau_{1,\G}$ for some $(\alpha,\beta) \in \PP$ such that $\alpha^{k-1} = 1$ or $\alpha^k = 1$ (but not both).  The rest of the statements follow from  Lemma \ref{L:PandG} and Theorem~\ref{T:autosA_hprod}.    
\end{proof}  
In the next result, we will use the notation $\sigma_{\PP} = \{\sigma_{\zeta,\varepsilon} \mid (\zeta,\varepsilon) \in \PP\}$ for the group of affine maps on $\overline \FF$ determined by $\PP$,  and $\sigma_{1,\G}$ for the subgroup determined by $\G$,
along with the fact that these groups act on the set of roots of $h$ in $\overline \FF$.   

\begin{cor}\label{C:Pinfo}  Assume $h$ has $k$ distinct roots in $\overline \FF$  for $k \geq 1$.   
\smallskip \smallskip

\noindent  {\rm (Case $k = 1$)} \  Let $\lambda$ be the unique root of $h$ in $\overline \FF$. 
\begin{enumerate}
\item [{\rm (a)}]  If  $\lambda \in \FF$,  then $\aut(\A_h) = \FF[x] \rtimes \FF^*$, where $\FF^*$ is identified with the 
group $\{\tau_{\alpha,(1-\alpha)\lambda} \mid \alpha \in \FF^*\}$.  
\item [{\rm (b)}]  If $\lambda \notin \FF$,  then $\aut(\A_h) = \FF[x]$.
\end{enumerate} 
\noindent  {\rm (Case $k \geq 2$)} \   Either
\begin{enumerate}
\item [{\rm (a)}]  $\aut(\A_h) \cong \FF[x] \rtimes \tau_{1,\G}$, and there exist  orbit representatives  
$\lambda_i,  i \in \tt I$,  for the action of  $\sigma_{1,\G}$ on 
the roots of $h$,  so that $h= \gamma \prod_{i \in {\tt I}}   h_i^{n_i}$,  where $\gamma \in \FF^*$, $n_i \geq 1$, and
$h_i(x) = \prod_{\nu \in \G} \big (x - \sigma_{1, \nu}(\lambda_i)\big)    = \prod_{\nu \in \G} \big(x - (\lambda_i + \nu)\big)$
for all $i \in \tt I$;  
\end{enumerate}   
 or  there exists $(\alpha,\beta) \in \PP$, where  $\alpha$ is a primitive $\ell$th root of unity  for some $\ell > 1$ such that  $\ell$ divides $k-1$ or $k$,  and  $\aut(\A_h) \cong ( \FF[x]  \rtimes \tau_{1,\G}) \rtimes \langle \tau_{\alpha,\beta} \rangle$.  
\begin{enumerate}
\item [{\rm (b)}]   If $\ell$ divides $k-1$,  then $\lambda_ 0: = \beta/(1-\alpha)$ is 
a root of $h(x)$ in $\FF$.  There are roots $\lambda_i, \ i \in \tt I$,  of $h$ in $\overline \FF$  so that $\{\lambda_i \mid i \in \tt I\}\cup \{\lambda_0\}$ are orbit representatives  for the action of $\sigma_{\PP}$ on the roots of $h$;  integers $n_i \geq 1$ for $i \in \tt I \cup \{0\}$;  and  
$\gamma \in \FF^*$  so that
$h = \gamma h_0^{n_0} \prod_{i \in {\tt I}} h_i^{n_i}$,  where 
 \begin{eqnarray}\label{eq:kpminus1}&& \hspace{-.75 truein}  h_0(x) =  \prod_{\nu \in \G} \big (x - \sigma_{1, \nu}(\lambda_0)\big)    = \prod_{\nu \in \G} \big(x - (\lambda_0+ \nu)\big) \\
&& \hspace{-.75 truein}  h_i(x) = \prod_{(\zeta,\varepsilon) \in \PP}  \big(  x - \sigma_{\zeta,\varepsilon}(\lambda_i)\big) 
= \Bigg(\prod_{\nu \in \G}  \prod_{j=0}^{\ell-1} \bigg(x-\left(\alpha^j \lambda_i + \nu + (1-\alpha^j)\lambda_0 \right)\bigg)\Bigg)^{n_i}. \qquad \end{eqnarray}
  \smallskip
\item [{\rm (c)}]  If  $\ell$ divides $k$, then there are orbit representatives $\lambda_i, \ i \in \tt I$, for the action of $\sigma_{\PP}$ on the roots of $h$ so that  $h = \gamma\prod_{i \in \tt I} h_i^{n_i}$ for some $\gamma \in \FF^*$ and integers $n_i \geq 1$,  where
\begin{equation}\label{eq:kp=ql} h_i(x) = \prod_{(\zeta,\varepsilon) \in \PP}  \big(  x - \sigma_{\zeta,\varepsilon}(\lambda_i)\big)   
= \Bigg(\prod_{\nu \in \G}  \prod_{j=0}^{\ell-1} \bigg(x-\left(\alpha^j \lambda_i + \nu + (1-\alpha^j)\textstyle{\frac{\beta}{1-\alpha}} \right)\bigg)\Bigg)^{n_i}. \end{equation}
\end{enumerate} 
If $\chara(\FF) = 0$, then $\G = \{0\}$, and  $\tau_{1,\G} = \{\mathsf{id}_{\A_h}\}$.  
\end{cor}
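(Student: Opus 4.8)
The plan is to reduce Corollary~\ref{C:Pinfo} entirely to Theorem~\ref{T:Pinfo} and Lemma~\ref{L:PandG}, the only new content being the explicit factorizations of $h$ in terms of $\sigma_\PP$-orbits. First I would dispose of the case $k=1$: this is immediate, since Theorem~\ref{T:Pinfo} (Case $k=1$) already gives $\aut(\A_h) = \FF[x]\rtimes\FF^*$ in subcase (a) and $\aut(\A_h)=\FF[x]$ in subcase (b), and part (a) here is just the restatement with $\FF^*$ identified with $\{\tau_{\alpha,(1-\alpha)\lambda}\mid \alpha\in\FF^*\}$ via the multiplication formula from Theorem~\ref{T:autosA_hprod} (equation \eqref{eq:sigmamult}). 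Likewise the final sentence ($\chara(\FF)=0 \Rightarrow \G=\{0\}$) is exactly Lemma~\ref{L:PandG}(1).

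For $k\ge 2$, the dichotomy is supplied directly by Theorem~\ref{T:Pinfo} (Case $k\ge 2$): either $\tau_\PP = \tau_{1,\G}$, giving $\aut(\A_h)\cong\FF[x]\rtimes\tau_{1,\G}$ (alternative (a)), or there is $(\alpha,\beta)\in\PP$ with $\alpha$ a primitive $\ell$th root of unity, $\ell>1$, $\ell\mid k-1$ or $\ell\mid k$, and $\aut(\A_h)\cong(\FF[x]\rtimes\tau_{1,\G})\rtimes\langle\tau_{\alpha,\beta}\rangle$ (alternatives (b), (c)). So the group-theoretic content is done; what remains is to translate the combinatorics of the $\sigma_{\alpha,\beta}$-action on roots, already analyzed inside the proof of Theorem~\ref{T:Pinfo}, into the displayed product formulas. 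The key observation is that $\PP$ acts on $\overline\FF$ through the affine maps $\sigma_{\zeta,\varepsilon}$ preserving root multiplicities of $h$ (this is the opening remark of the proof of Theorem~\ref{T:Pinfo}), so $h$ must be constant on $\sigma_\PP$-orbits: writing $h=\gamma\prod_i h_i^{n_i}$ where each $h_i(x)=\prod_{(\zeta,\varepsilon)\in\PP}(x-\sigma_{\zeta,\varepsilon}(\lambda_i))$ is the monic polynomial whose roots form the orbit of a chosen representative $\lambda_i$, and $n_i$ is the (common) multiplicity of the roots in that orbit.

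The remaining work is purely bookkeeping on how the orbit decomposes. In alternative (a) the group is $\sigma_{1,\G}$, whose orbit of $\lambda_i$ is $\{\lambda_i+\nu\mid\nu\in\G\}$, giving $h_i(x)=\prod_{\nu\in\G}(x-(\lambda_i+\nu))$. In alternatives (b) and (c) the group $\sigma_\PP$ is generated by $\sigma_{1,\G}$ together with $\sigma_{\alpha,\beta}$; iterating \eqref{eq:sigmamult} gives $\sigma_{\alpha,\beta}^j(\mu)=\alpha^j\mu+(1+\alpha+\cdots+\alpha^{j-1})\beta$, and since $\alpha$ is a primitive $\ell$th root of unity the $\sigma_{\alpha,\beta}$-orbit of any root has size $1$ or $\ell$ (this is shown inside the proof of Theorem~\ref{T:Pinfo}, where one also learns there is at most one orbit of size $1$, namely $\lambda_0=\beta/(1-\alpha)$, which lies in $\FF$ and occurs exactly when $\ell\mid k-1$). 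Combining the $\langle\sigma_{\alpha,\beta}\rangle$-action with the commuting $\sigma_{1,\G}$-translations and using $1+\alpha+\cdots+\alpha^{j-1}=(1-\alpha^j)/(1-\alpha)=1-\alpha^j$ only after substituting $\lambda_0$ for $\beta/(1-\alpha)$, one gets precisely the double products in \eqref{eq:kpminus1}, the second line of (b), and \eqref{eq:kp=ql}; the fixed root $\lambda_0$ contributes the separate factor $h_0^{n_0}$ in case (b). I expect the main obstacle to be purely presentational rather than mathematical: namely, verifying that the displayed double-product expressions are well-defined (independent of the choice of orbit representatives $\lambda_i$, and that the inner $j$-product really does enumerate the $\langle\sigma_{\alpha,\beta}\rangle$-orbit without repetition when the orbit has full size $\ell$) and correctly account for the multiplicity exponents $n_i$ — essentially checking that the factorization of $h$ one writes down is forced, which follows from unique factorization in $\overline\FF[x]$ together with the multiplicity-preservation of the $\sigma_{\zeta,\varepsilon}$.
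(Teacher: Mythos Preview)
Your proposal is correct and follows essentially the same approach as the paper: defer the group-theoretic structure entirely to Theorem~\ref{T:Pinfo} and Lemma~\ref{L:PandG}, then obtain the factorizations of $h$ from the orbit decomposition of the root set under $\sigma_\PP$, using that the affine maps $\sigma_{\zeta,\varepsilon}$ preserve multiplicities. One small caution: the $\sigma_{1,\G}$-translations do not literally commute with $\sigma_{\alpha,\beta}$ (rather $\sigma_{1,\G}$ is normal in $\sigma_\PP$ by Lemma~\ref{L:PandG}(3)), but this does not affect your orbit enumeration since every element of $\sigma_\PP$ is uniquely $\sigma_{1,\nu}\circ\sigma_{\alpha,\beta}^{\,j}$, which is all you actually use.
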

 
\begin{proof}   We may assume $k \geq 2$, since the first case follows directly from Theorem \ref{T:Pinfo}. 

Recall that $\G$ is a finite subgroup of $(\FF,+)$ and $\G = \{0\}$ when $\chara(\FF) = 0$ by (1) of 
Lemma \ref{L:PandG}.  Thus,  whenever $\G \neq \{0\}$,  we can suppose  $\chara(\FF) = p > 0$.  

Now if (a) holds, then either $\G = \{0\}$ and $\aut(\A_h) \cong  \FF[x]$, or else  $\G = \FF_p \nu_1 + \cdots + \FF_p \nu_d$ for some $d$.  Assume  $\lambda_i, i \in {\tt I}$, are  roots of $h$ in $\overline \FF$, which are representatives for the orbits of roots of $h$ in $\overline \FF$  under the affine bijections $\sigma_{1,\nu}$ for $\nu \in \G$.    Since each orbit is of size $p^d$, we have $k = q p^d$.  Then $h$ has the form displayed in (a).   When $\G = \{0\}$,  then $\aut(\A_h) \cong \FF[x]$,  $\lambda_i, i \in \tt I$, are the distinct roots of
$h$ in $\overline \FF$,  and $k = |\tt I |$ in this case.

Now suppose that  $\aut(\A_h) \not \cong \FF[x]  \rtimes \tau_{1,\G}$.  By Theorem \ref{T:Pinfo},
$\aut(\A_h)  \cong   (\FF[x]  \rtimes \tau_{1,\G}) \rtimes  \langle \tau_{\alpha,\beta} \rangle$,
where $\alpha$ is primitive $\ell$th root of unity for some $\ell > 1$ that divides $k$ or $k-1$. 
  
When $\ell$ divides $k-1$,  then as we have seen previously, there is one orbit of size
one under the action of $\sigma_{\alpha,\beta}$ generated by the root $\lambda_0: = \beta/(1-\alpha) \in \FF$.
Either the group $\G = \{0\}$, or $\chara(\FF) = p > 0$ and $\G$ has order $p^d$ for some $d \geq 1$, and $\G$ is invariant under multiplication by the cyclic group generated by $\alpha$ by (3) of
Lemma \ref{L:PandG}.   Under this action of the group  $\langle \alpha \rangle$,  there is one orbit of size 1 (namely $\{0\}$), and all the other orbits have size $\ell$.   Thus,  $r \ell + 1 = p^d$ for some $r \geq 0$.  

Consider the orbits of roots under the group generated by the maps $\sigma_{\alpha,\beta}$
and $\sigma_{1,\nu}$ as $\nu$ ranges over the elements of $\G$.   One such orbit
is $\{ \lambda_0 + \nu \mid \nu \in \G\}$.    Assume $\lambda_i$ for $i\in {\tt I}$ are
the representatives for the other orbits.      Then $h$ has the factorization into
linear factors given in \eqref{eq:kpminus1} for some $\gamma \in \FF^*$,
and $n_i \geq 1$.     Counting roots of $h$ in $\overline \FF$, we have
$q\ell + 1 = k$ when $\G = \{0\}$, and 
$q\ell p^d + p^d = ( r\ell+1)(q\ell +1) = \ell(r+q+rq\ell) + 1 = k$, when 
$\G \neq \{0\}$ and $ \chara(\FF) = p > 0$.   

The case when $\ell$ divides $k$ is similar and follows the same line of reasoning - 
just omit the factors of $h$ involving $\lambda_0$, and use the
fact that $\sigma_{\alpha,\beta}^j(\lambda_i+\nu) 
=\alpha^j(\lambda_i+\nu) + (1+\alpha+ \cdots + \alpha^{j-1})\beta$.   In this case, counting
roots gives either $q \ell = k$ ($\G = \{0\}$) or   $q p^d \ell = q(r\ell+1)\ell = k$ ($\G \neq \{0\}$,
$\chara(\FF) = p > 0$).   \end{proof} \vspace{-.1 truecm} 

\begin{remark}  Suppose $\alpha \in \FF$ is an $\ell$th root of unity for $\ell > 1$.  Let $\G$ be
a finite subgroup of $(\FF,+)$ invariant under multiplication by $\alpha$
 (necessarily $\G = \{0\}$ when $\chara(\FF) = 0$).
  By choosing $\lambda_i$ for
$i$ in some index set  ${\tt I}$ so that $\lambda_0+\nu, \alpha^j( \lambda_i+\nu)+ \lambda_0(1-\alpha^j)$ are distinct
for $\nu \in \G$, $i \in \tt I \cup \{0\}$,  and $j = 0,1,\dots,\ell-1$, 
and taking arbitrary $n_i \geq 1$ for $i \in \tt  I  \cup \{0\}$, we
can construct  $h$ as  in \eqref{eq:kpminus1} with  $\tau_{1,\G} \rtimes \langle \tau_{\alpha, \lambda_0(1-\alpha)}\rangle  \subset  \aut(\A_h)$.    Similarly,  if we choose  $\beta$ arbitrarily, $\G$ as
above,  and $\lambda_i$ for $i \in {\tt I}$ so that  $\alpha^j (\lambda_i+\nu)+ \beta(1-\alpha^j)/(1-\alpha)$
are all distinct for $\nu \in \G,\,  i \in {\tt I}$, and $j=0,1,\dots, \ell-1$,  and take arbitrary $n_i \geq 1$,   we can construct  $h$ as in \eqref{eq:kp=ql}
with $\tau_{1,\G} \rtimes \langle \tau_{\alpha, \beta}\rangle  \subset  \aut(\A_h)$. 
\end{remark}  

\begin{exam}  In this example,  we compute $\aut(\A_h)$ for any monic quadratic polynomial  $h(x) = x^2 - \zeta_1 x + \zeta_0 \in \FF[x]$.  Recall that  $(\alpha,\beta) \in \PP$  if and only if $h(\alpha x+\beta) = \alpha^{\degg h} h(x).$  Thus, 
\begin{eqnarray*} 
(\alpha,\beta) \in \PP &  \Longleftrightarrow  & (\alpha x +\beta)^2 - \zeta_1(\alpha x+\beta) + \zeta_0 = \alpha^2(x^2 - \zeta_1 x +\zeta_0) \\
& \Longleftrightarrow  & 2 \beta - \zeta_1 = -\alpha \zeta_1\ \hbox{\rm and} \ \beta^2 -\zeta_1 \beta + \zeta_0 = \alpha^2 \zeta_0 \\
& \Longleftrightarrow & \beta = \half  (1-\alpha)\zeta_1 \ \hbox{\rm and} \ \frac{1}{4}(1-\alpha)^2 \zeta_1^2 - \half (1-\alpha)\zeta_1^2+ (1-\alpha^2) \zeta_0 = 0.\end{eqnarray*} 
Therefore,  if $(\alpha,\beta) \in \PP$, then  either $(\alpha,\beta) = (1,0)$,  or $\alpha \neq 1$ and  $(1-\alpha)\zeta_1^2 - 2\zeta_1^2 + 4(1+\alpha)\zeta_0
= (1+\alpha)(4\zeta_0-\zeta_1^2) = 0$.   In the second event, either  $ \zeta_1^2 \neq 4\zeta_0$ and $(\alpha,\beta) = (-1,\zeta_1)$,   
or $\zeta_0 = \frac{1}{4} \zeta_1^2$ so that $h(x) = (x- \half \zeta_1)^2$.   We conclude that there are two possibilities:  either $\PP = \{(1,0), (-1,\zeta_1)\}$
which happens when $h(x)$ has two distinct roots, or $h(x) = (x- \half\zeta_1)^2$ and $\PP = \{(\alpha, (1-\alpha)\half\zeta_1)\}$.    In the first situation,  $\aut(\A_h) = \FF[x] \rtimes \langle \tau_{-1,\zeta_1}\rangle$ so that  $\aut(\A_h)/\FF[x]$ is a cyclic group of order two; in the second,  $\aut(\A_h) = \FF[x] \rtimes \FF^*$.   

In this calculation, we have tacitly assumed that $\chara(\FF) \neq 2$.   When $\chara(\FF) = 2$, then 
$(\alpha,\beta) \in \PP$ if and only if $\zeta_1 = \alpha \zeta_1$ and $\beta^2 - \zeta_1 \beta + \zeta_0 = \alpha^2 \zeta_0$.   Either $\zeta_1 \neq 0$  and  $\aut(\A_h) = \FF[x] \rtimes \tau_{\PP}$, where $\PP  = \{(1,0), (1,\zeta_1)\}$,
or else   $\zeta_1 = 0$ and $h(x) = x^2 + \zeta_0$.  
If $\zeta_0 = \lambda^2$ for some $\lambda \in \FF$, then $h(x) = (x+\lambda)^2$ and $(\alpha, (1-\alpha)\lambda) \in \PP$ for all $\alpha \in \FF^*$, so that $\aut(\A_h) = \FF[x] \rtimes \FF^*$.   If no such $\lambda$ exists, then  $\aut(\A_h) = \FF[x]$.   
\end{exam}

\begin{subsection}{The $\aut(\A_h)$ Invariants}  \end{subsection}

Throughout this section and the next,  we let $\fa = \aut(\A_h)$.   In this section, we determine  the invariants under $\fa$ in $\A_{h}$:
$$\A_{h}^{\fa} = \{ a \in \A_{h} \mid \omega(a) = a \quad \forall \  \omega \in \fa\}.$$

\begin{lemma}
For any $h\in\DD$, $\A_{h}^{\fa}=\DD^{\fa}=\DD^{\PP} =\{r \in \DD \mid \tau_{\zeta,\varepsilon}(r) = r \ \forall \ (\zeta,\varepsilon) \in \PP\}$.
\end{lemma}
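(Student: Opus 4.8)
The claim is a chain of four equalities, and the plan is to verify each one in turn, working from the outside in. The automorphism group decomposes as $\fa = \FF[x] \rtimes \tau_\PP$ by Theorem~\ref{T:autosA_hprod}(v), so an element is $\fa$-invariant if and only if it is fixed by every $\phi_f$ ($f \in \FF[x]$) and by every $\tau_{\zeta,\varepsilon}$ ($(\zeta,\varepsilon) \in \PP$). The central observation is that invariance under the $\phi_f$'s alone already forces an element into $\DD = \FF[x]$, after which the $\tau$-action is just the affine action on polynomials in $x$.

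\textbf{Step 1: $\A_h^\fa \subseteq \DD$.} Write $a = \sum_{i=0}^n r_i \hat y^i$ with $r_i \in \DD$ and $r_n \neq 0$, and suppose $\omega(a) = a$ for all $\omega \in \fa$. In particular $\phi_{h'}(a) = a$, where $\phi_{h'}(x) = x$, $\phi_{h'}(\hat y) = \hat y + h'$. More efficiently, one can use a single well-chosen $\phi_f$: since $\fa \supseteq \{\phi_f \mid f \in \FF[x]\}$, we have $\phi_f(a) = a$ for all $f$, i.e.\ $\sum_i r_i (\hat y + f)^i = \sum_i r_i \hat y^i$ in $\A_h$. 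Expanding the left side via the basis $\{\hat y^i\}$ over $\DD$ (Theorem~\ref{thm:basicFactsOre}(1)) and comparing the coefficient of $\hat y^{n-1}$ gives $n r_n f + r_{n-1} = r_{n-1}$, so $n r_n f = 0$ for every $f \in \FF[x]$. Taking $f = 1$ forces $n r_n = 0$ in $\DD$; since $r_n \neq 0$ and $\DD$ is a domain, either $n = 0$ (so $a = r_0 \in \DD$, done) or $n \neq 0$ in $\FF$, which still gives $n r_n = 0$, a contradiction unless $\chara(\FF) \mid n$. In the positive-characteristic case this needs a little more care: one should instead note that $\phi_f(\hat y^n) - \hat y^n$, expanded, has leading term (in $\hat y$) equal to $\binom{n}{j}f^j \hat y^{n-j}$ where $j$ is minimal with $\binom{n}{j} \not\equiv 0$; such $j$ with $1 \le j \le n$ always exists for $n \ge 1$ (Lucas' theorem, or simply $j = n$ since $\binom{n}{n} = 1$). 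Using $j = n$: the constant term (in $\hat y$) of $\phi_f(a) - a$ is $r_n f^n + (\text{terms from lower } r_i) = 0$; iterating downward — first the $\hat y^{n-1}$ coefficient, etc., or a cleaner degree argument in $\hat y$ — one concludes $r_i = 0$ for all $i \ge 1$. The cleanest route: the map $a \mapsto \phi_f(a) - a$ strictly lowers the $\hat y$-degree when $a \notin \DD$ is $\phi_f$-fixed... I would simply argue that if $a \notin \DD$ then choosing $f = 1$, the element $\phi_1(a)$ has the same $\hat y$-degree $n$ and same leading coefficient $r_n$, but $\phi_1(a) - a$ has $\hat y$-degree exactly $n-1$ with leading coefficient $\binom{n}{1}r_n = n r_n$ (if $\chara \nmid n$) — and if $\chara \mid n$ replace $1$ by the analysis at the first nonvanishing binomial coefficient. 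In all cases $\phi_1(a) \ne a$. Hence $\A_h^\fa \subseteq \DD$, and since clearly $\phi_f$ fixes $\DD$ pointwise, $\A_h^\fa = \DD^\fa$.

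\textbf{Step 2: $\DD^\fa = \DD^\PP$.} For $r \in \DD$, $\phi_f(r) = r$ automatically, so $r$ is $\fa$-invariant iff $\tau_{\zeta,\varepsilon}(r) = r$ for all $(\zeta,\varepsilon) \in \PP$, i.e.\ iff $r \in \DD^\PP = \{r \in \DD \mid \tau_{\zeta,\varepsilon}(r) = r \ \forall (\zeta,\varepsilon) \in \PP\}$ — using that $\tau_{\zeta,\varepsilon}$ acts on $\DD = \FF[x]$ by $x \mapsto \zeta x + \varepsilon$, hence preserves $\DD$. This is immediate from the semidirect product description. Finally $\DD^\fa = \DD^\PP$ as just noted, completing the chain $\A_h^\fa = \DD^\fa = \DD^\PP$, and the middle equality $\DD^\fa = \DD^\PP$ is also trivially the same statement written two ways (since on $\DD$ the only part of $\fa$ acting nontrivially is $\tau_\PP$).

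\textbf{Main obstacle.} The only genuine content is Step~1, and within it the positive-characteristic bookkeeping: in characteristic $p$, applying $\phi_f$ for a \emph{single} $f$ may not detect that $a \notin \DD$ if all the relevant binomial coefficients vanish mod $p$, so one must either pick $f$ cleverly or argue via the $\hat y$-degree directly using $\binom{n}{n} = 1$ (which never vanishes). I expect the slickest writeup is: if $a = \sum_{i \le n} r_i \hat y^i \notin \DD$ (so $n \ge 1$, $r_n \ne 0$), compare constant terms in $\hat y$ on both sides of $\phi_f(a) = a$ to get $\sum_i r_i f^i = r_0$ for all $f$, a polynomial identity in $f$ forcing $r_i = 0$ for $i \ge 1$ since $\DD[f]$ is a domain and $r_n f^n$ is the top term — contradiction. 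That handles all characteristics uniformly and is the step I would spend the most care on.
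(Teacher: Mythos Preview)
Your overall strategy matches the paper's: show that invariance under the subgroup $\{\phi_f \mid f \in \FF[x]\}$ alone already forces an element into $\DD$, then use the semidirect product decomposition $\fa = \FF[x] \rtimes \tau_\PP$ to reduce $\DD^\fa$ to $\DD^\PP$. Your Step~2 is fine, and your characteristic-zero argument in Step~1 (reading off the $\hat y^{n-1}$ coefficient of $\phi_f(a)-a$ to obtain $n r_n f = 0$) is correct and is exactly what the paper does in that case.

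There is, however, a genuine gap in your proposed characteristic-$p$ fix. You assert that comparing constant terms in $\hat y$ of $\phi_f(a)=a$ yields $\sum_i r_i f^i = r_0$, but this is \emph{false} for non-central $f \in \FF[x]$: since $[\hat y,f] = hf' \neq 0$ in general, the binomial expansion of $(\hat y+f)^i$ does not hold, and its constant term is $f^i$ plus correction terms involving $h$ and derivatives of $f$. For instance $(\hat y+f)^2 = \hat y^2 + 2f\hat y + (f^2 + hf')$, so the constant term is $f^2 + hf'$, not $f^2$. The ``polynomial identity in $f$'' you invoke is therefore not valid as stated. (If you restrict to scalar $f \in \FF$ the binomial expansion is legitimate, but then the vanishing of $\sum_{i\ge 1} r_i f^i$ for all $f \in \FF$ only gives finitely many constraints when $\FF$ is finite, which need not force the $r_i$ to vanish.)

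The paper avoids this by applying $\phi_g$ only for $g \in \DD \cap \centh$, so that $\hat y$ and $g$ genuinely commute and the binomial expansion \emph{is} valid. In characteristic $p$ one takes $g = x^{np} \in \FF[x^p]$ with $np > \max_i \deg r_i$; the constant-term comparison then legitimately gives $\sum_{i\ge 1} r_i g^i = 0$ in $\FF[x]$, and since the summands have pairwise distinct $x$-degrees (the top one being $r_n g^n$), this forces $r_n = 0$, a contradiction. Your argument can be repaired in exactly this way.
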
 
\begin{proof}
Let $\FF[x]\subseteq \fa$ be the subgroup of automorphisms of $\A_{h}$ of the form $\phi_{r}$, for $r\in\FF[x]$. We will first  show that $\DD = \A_{h}^{\FF[x]}$. The inclusion $\DD\subseteq\A_{h}^{\FF[x]}$ is clear, since $\phi_r(x) = x$ for all $r \in \DD$. We will prove that the reverse inclusion holds as well.

Assume by contradiction that there is  $a\in\A_{h}^{\FF[x]}\setminus\DD$, say $a=\sum_{i=0}^{m}f_{i} \hat y^{i}$ with $f_i = f_{i}(x)\in\DD$, $m\geq 1$, and $f_{m}\neq 0$. We can further assume $f_{0}=0$, so $a=\sum_{i=1}^{m}f_{i}\hat y^{i}$. Take $g\in\DD\cap\centh$. Then 

$$
0=\phi_{g}(a)-a=\sum_{i=1}^{m}f_{i}\left((\hat y + g)^{i}-\hat y^{i}\right).
$$
For $0\leq k\leq m-1$, the coefficient of $\hat y^{k}$ in the sum above is $\sum_{i=1}^{m}c_{i, k}f_{i}g^{i-k}$, where $c_{i, k}={i\choose k}$ if $k<i$ and $c_{i, k}=0$ otherwise.

Assume first that $\chara(\FF)=0$. Take $g=1$ and $k=m-1$ above. Then we get $m f_{m}=0$, which is a contradiction. Now suppose $\chara(\FF)=p>0$,  and take $g=x^{np}$, where $n$ is chosen so that $np>\max \{ \degg f_{i} \mid 1\leq i\leq m\}$, and $k=0$. We have $\sum_{i=1}^{m}f_{i}g^{i}=0$. For every $i$, either $f_{i}=0$ or
$$
inp\leq \degg f_{i}g^{i}< (i+1) np.
$$
This implies that $f_{m}g^{m}=0$, so $f_{m}=0$, which is a contradiction.  Thus $\A_{h}^{\FF[x]}\subseteq\DD$, and equality is proved.

The above shows that $\A_h^\fa \subseteq \DD^{\fa} \subseteq \DD^{\PP}$.   However, since $\phi_r(x) = x$ for all $r \in \DD$,   $\DD^{\fa} = \DD^{\PP}$, and the rest follows. 
\end{proof}

 Next we determine the invariants under $\fa$ in $\DD$:
$$\DD^{\fa} = \{ r \in \DD \mid \omega(r) = r \quad \forall \  \omega \in \fa\} = 
 \DD^{\PP} = \{r \in \DD \mid  \tau_{\zeta,\varepsilon}(r) = r \ \forall \  (\zeta,\varepsilon) \in \PP\}.$$

\begin{lemma} \label{L:invgen}  Suppose $\DD^{\fa} \neq \FF$.   Then there  exists a unique monic polynomial $s$  of minimal degree in $\DD^{\fa}\setminus \FF$  with zero constant term  such that  $\DD^\fa = \FF[s]$.  \end{lemma}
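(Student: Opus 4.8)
The plan is to first pin down the structure of $\DD^\fa$ as a subalgebra of $\DD = \FF[x]$ and then extract the generator. Recall from the preceding lemma that $\DD^\fa = \DD^\PP$, so I only need to understand which polynomials are fixed by every $\tau_{\zeta,\varepsilon}$ with $(\zeta,\varepsilon)\in\PP$. Each $\tau_{\zeta,\varepsilon}$ acts on $\DD$ by the affine substitution $x\mapsto \zeta x+\varepsilon$, so $\DD^\fa$ is precisely the ring of polynomials invariant under the (finite-image, by Theorem~\ref{T:Pinfo}) group of affine transformations $\sigma_\PP$ acting on $\FF[x]$. The first step is to observe that $\DD^\fa$ is an $\FF$-subalgebra of $\FF[x]$ containing $\FF$, hence by a classical theorem (L\"uroth / the fact that a subalgebra of $\FF[x]$ containing $\FF$ properly is itself a polynomial ring in one generator — or more elementarily, a degree argument) $\DD^\fa = \FF[t]$ for some polynomial $t$. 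The hypothesis $\DD^\fa\neq\FF$ guarantees $t\notin\FF$.

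Next I would produce the distinguished generator. Among all elements of $\DD^\fa\setminus\FF$, choose one of minimal degree; call it $s_0$. I claim $\DD^\fa = \FF[s_0]$: indeed $\FF[s_0]\subseteq\DD^\fa$, and if there were $r\in\DD^\fa\setminus\FF[s_0]$ one could, by subtracting a suitable polynomial in $s_0$ (matching leading terms, using that $\deg r$ is a multiple of $\deg s_0$ in the polynomial ring $\DD^\fa=\FF[t]$), produce an element of $\DD^\fa\setminus\FF$ of strictly smaller degree, contradicting minimality — this is the standard Euclidean-type reduction. Then I normalize: replace $s_0$ by $\mu^{-1}s_0$ where $\mu$ is its leading coefficient to make it monic, and subtract its constant term $s_0(0)$ (which lies in $\FF\subseteq\DD^\fa$, so the result stays in $\DD^\fa$) to arrange zero constant term. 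Call the outcome $s$. Since these operations do not change the degree and $s\notin\FF$ (its degree is still $\deg s_0\geq 1$), $s$ is still of minimal degree in $\DD^\fa\setminus\FF$, is monic, has zero constant term, and $\FF[s]=\FF[s_0]=\DD^\fa$.

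Finally, uniqueness. Suppose $s$ and $\tilde s$ both satisfy the three conditions (monic, minimal degree in $\DD^\fa\setminus\FF$, zero constant term). Minimality forces $\deg s = \deg\tilde s =: d$. Then $s - \tilde s \in \DD^\fa$ has degree $< d$ (leading terms cancel since both are monic of degree $d$) and zero constant term, so $s-\tilde s\in\DD^\fa$ with $\deg(s-\tilde s) < d$; by minimality of $d$ this element must lie in $\FF$, and having zero constant term it must be $0$, so $s = \tilde s$. I expect the main obstacle to be merely citing or re-deriving cleanly the fact that a proper $\FF$-subalgebra of $\FF[x]$ containing $\FF$ is a polynomial algebra in one variable (equivalently, that degrees of invariants form a numerical semigroup generated by the minimal one); everything else is a short degree/leading-coefficient bookkeeping argument, and the normalization and uniqueness steps are routine.
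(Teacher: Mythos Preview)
Your normalization and uniqueness arguments are fine, but the core step has a genuine gap --- and it is exactly the one you flag as the ``main obstacle,'' though you underestimate it. The assertion that a subalgebra of $\FF[x]$ properly containing $\FF$ is itself a polynomial ring in one generator is \emph{false}: $\FF[x^2,x^3]$ is a standard counterexample. L\"uroth's theorem concerns intermediate \emph{fields} of $\FF(x)/\FF$, not subrings of $\FF[x]$, so it does not apply either. Your Euclidean-type reduction then becomes circular: to cancel the leading term of $r$ by subtracting a scalar multiple of a power of $s_0$ you need $\degg s_0 \mid \degg r$, and you justify this only by already assuming $\DD^\fa = \FF[t]$.

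The paper's argument bypasses this by exploiting the group action rather than any general ring-theoretic fact. Given $s$ of minimal degree, divide an arbitrary $r\in\DD^\fa$ by $s$ \emph{in $\FF[x]$} (not in $\DD^\fa$): $r = sf + g$ with $\degg g < \degg s$. Applying $\tau_{\zeta,\varepsilon}$ and using that $r$ and $s$ are fixed and that $\tau_{\zeta,\varepsilon}$ preserves degree, uniqueness of quotient and remainder forces $f,g\in\DD^\fa$. Minimality of $\degg s$ then gives $g\in\FF$, so $\DD^\fa = s\,\DD^\fa \oplus \FF$, and induction on degree finishes. The point you are missing is that invariance of the quotient and remainder is what makes the division work --- this is precisely where the hypothesis that $\DD^\fa$ is a \emph{fixed ring}, not merely an arbitrary subalgebra, enters.
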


\begin{proof}  
Let $s$ be a monic polynomial of minimal degree in  $\DD^{\PP}\setminus \FF$.   We may assume that $s$ has zero 
constant term.     Now for every $r \in \DD^{\PP}$,   $r = sf + g$ for some $f,g \in \DD$ with $\degg g < \degg s$.
Applying $\tau_{\zeta,\varepsilon}$ to that relation gives
$$r = s \tau_{\zeta,\varepsilon}(f) + \tau_{\zeta,\varepsilon}(g),$$
and subtracting that from the above gives  $0 = s (f -\tau_{\zeta,\varepsilon}(f)) + g - \tau_{\zeta,\varepsilon}(g)$.
Since this is true for all $(\zeta,\varepsilon) \in \PP$, and since  $\tau_{\zeta,\varepsilon}$ preserves degree,   we have that $f \in \DD^{\PP}$ and $g \in \FF$. Thus $\DD^{\PP} = s \DD^{\PP} \oplus \FF$.

Clearly $\FF[s] \subseteq \DD^\fa=\DD^{\PP}$.  For the other direction, we proceed by induction on the
degree of an element of $\DD^{\fa}$; the case of degree 0 being obvious.   Assuming the result for degree $< n$,
we suppose $r \in \DD^{\fa}$ has degree $n$ where $n \geq 1$.     Then there exist $f \in \DD^{\fa}$ and $\xi_r \in \FF$  such that
$r = sf+ \xi_r$.    By induction, $f \in \FF[s]$.   Hence so is $r$, and $\DD^{\fa} \subseteq \FF[s]$. The uniqueness
of such an $s$ is clear.    \end{proof}
 
\begin{thm}\label{T:autinv}  Suppose  $\fa = \aut(\A_h)$.    Then  
\begin{itemize}
\item [{\rm (i)}]  $\DD^{\fa} =  \DD$ \   if  $\fa = \FF[x]$,   and  $\DD^{\fa} = \FF$ if  \ $ \fa = \FF[x] \rtimes \FF^*$  and $|\FF| = \infty.$ 
\item[{\rm (ii)}]  $\DD^{\fa} =  \FF[t]$,  where the polynomial $t \in \DD$ can be taken as follows:
\begin{itemize}
\item[{\rm (a)}] If  $\tau_{\PP}=\tau_{1, \G}$, then  $t(x) = \prod_{\nu \in \G}  \left(x +\nu \right)$. 
\item[{\rm (b)}] If
$\tau_{\PP} = \tau_{1,\G} \rtimes \langle \tau_{\alpha,\beta} \rangle$, where $\alpha$ is a primitive $\ell$th root of unity for some $\ell > 1$,  then  $t(x) = \prod_{\nu \in \G}  \left(x + \frac{\beta}{\alpha-1}+\nu \right)^\ell$.
\end{itemize}
\end{itemize}  
\end{thm}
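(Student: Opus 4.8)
\emph{Proof plan.} The plan is to reduce everything to a computation of fixed rings by means of the preceding lemma, which identifies $\A_h^{\fa}$ with $\DD^{\fa} = \DD^{\PP}$: it suffices to describe the subring of $\DD = \FF[x]$ fixed by the affine maps $\tau_{\zeta,\varepsilon}\colon x\mapsto \zeta x+\varepsilon$ for $(\zeta,\varepsilon)\in\PP$. By Theorem~\ref{T:Pinfo} the forms of $\fa$ listed in (i) and (ii) exhaust all possibilities (when $\FF$ is finite and $\fa=\FF[x]\rtimes\FF^{*}$ one lands in case (ii)(b) with $\G=\{0\}$ and $\ell=|\FF^{*}|$). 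For part (i): if $\fa=\FF[x]$, every automorphism in $\fa$ fixes $x$ and hence all of $\DD$, so $\DD^{\fa}=\DD$. If $\fa=\FF[x]\rtimes\FF^{*}$ with $|\FF|=\infty$, where $\FF^{*}$ is identified with $\{\tau_{\alpha,(1-\alpha)\lambda}\mid \alpha\in\FF^{*}\}$ and $\lambda\in\FF$, I would change coordinates to $w=x-\lambda$, in which $\tau_{\alpha,(1-\alpha)\lambda}$ acts by $w\mapsto\alpha w$; writing a fixed element as $\sum_{i}c_{i}w^{i}$ and comparing coefficients forces $(\alpha^{i}-1)c_{i}=0$ for all $\alpha\in\FF^{*}$ and all $i$, and since $z^{i}-1$ cannot vanish on the infinite set $\FF^{*}$ for $i\geq 1$, we get $c_{i}=0$ for $i\geq 1$, so $\DD^{\fa}=\FF$.

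For part (ii), in both listed cases $\fa=\FF[x]\rtimes\tau_{\PP}$, hence $\DD^{\fa}=\DD^{\tau_{\PP}}$. Since $\tau_{1,\G}$ is a normal subgroup of $\tau_{\PP}$ (Lemma~\ref{L:PandG}), I would compute the fixed ring in two stages, $\DD^{\tau_{\PP}}=\bigl(\DD^{\tau_{1,\G}}\bigr)^{\tau_{\PP}/\tau_{1,\G}}$. For the inner stage, use the coordinate $w=x$ in case (a) and $w=x+\frac{\beta}{\alpha-1}$ in case (b); then each $\tau_{1,\nu}$ acts by $w\mapsto w+\nu$, the polynomial $v:=\prod_{\nu\in\G}(w+\nu)$ is $\tau_{1,\G}$-fixed, and a short argument over $\overline{\FF}$ — the roots of any $\tau_{1,\G}$-invariant polynomial form a union of free $\G$-orbits, each of size $|\G|$, with constant multiplicity along orbits — shows that every non-constant $\tau_{1,\G}$-invariant has degree divisible by $|\G|=\degg v$. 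Together with Lemma~\ref{L:invgen} this yields $\DD^{\tau_{1,\G}}=\FF[v]$, which already settles case (a), since there $t=v=\prod_{\nu\in\G}(x+\nu)$.

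In case (b), the quotient $\tau_{\PP}/\tau_{1,\G}$ is cyclic, generated by the image of $\tau_{\alpha,\beta}$, which in the coordinate $w$ acts by $w\mapsto\alpha w$; hence $\tau_{\alpha,\beta}(v)=\prod_{\nu\in\G}(\alpha w+\nu)=\alpha^{|\G|}\prod_{\nu\in\G}(w+\alpha^{-1}\nu)=\alpha^{|\G|}v$, using $\alpha^{-1}\G=\G$ from Lemma~\ref{L:PandG}. Since $\FF[v]$ is a polynomial ring in the transcendental $v$ on which $\tau_{\alpha,\beta}$ acts by the scalar $\alpha^{|\G|}$, its fixed ring is $\FF[v^{d}]$, where $d$ is the multiplicative order of $\alpha^{|\G|}$. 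The step I expect to be the main obstacle is showing $d=\ell$, i.e.\ that $\alpha^{|\G|}$ is again a primitive $\ell$th root of unity: this reduces to $\gcd(\ell,|\G|)=1$, which holds because any root of unity in $\FF$ has order coprime to $\chara(\FF)$ (by the identity $z^{p^{a}m}-1=(z^{m}-1)^{p^{a}}$ in characteristic $p$), while $|\G|$ is $1$ in characteristic $0$ and a power of $p$ in characteristic $p$ since $\G$ is a finite subgroup of $(\FF,+)$. Consequently $\DD^{\fa}=\FF[v^{\ell}]=\FF\bigl[\prod_{\nu\in\G}\bigl(x+\tfrac{\beta}{\alpha-1}+\nu\bigr)^{\ell}\bigr]=\FF[t]$, as asserted.
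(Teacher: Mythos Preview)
Your proof is correct and takes a somewhat different route from the paper's. Both use the orbit structure of roots under $\G$ and (the evident generalization of) Lemma~\ref{L:invgen} to groups of affine substitutions. The main divergence is in case~(ii)(b): you factor the computation through the normal subgroup $\tau_{1,\G}\trianglelefteq\tau_{\PP}$, first obtaining $\DD^{\tau_{1,\G}}=\FF[v]$, and then observe that in the shifted coordinate $w=x+\beta/(\alpha-1)$ the generator $\tau_{\alpha,\beta}$ scales $v$ by $\alpha^{|\G|}$, so the second-stage invariants are immediately $\FF[v^{\ell}]$ once $\gcd(\ell,|\G|)=1$ is established. The paper instead bounds degrees directly, showing $\ell\mid\degg r$ for nonconstant $r\in\DD^{\fa}$ by analyzing the orbits of the roots of $r$ under $\langle\sigma_{\alpha,\beta}\rangle$ (with the device of replacing $r$ by $r+1$ to sidestep the possible fixed root $\beta/(1-\alpha)$), and then combines this with $|\G|\mid\degg r$. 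For the coprimality of $\ell$ and $|\G|$, the paper invokes Remark~\ref{R:coprime} (that $\ell$ divides $|\G|-1$), whereas you use the characteristic-theoretic observation that $|\G|$ is a $p$-power while a primitive $\ell$th root of unity forces $\gcd(\ell,p)=1$. Your two-stage approach is tidier and avoids the $r\mapsto r+1$ maneuver; the paper's is more hands-on. In part~(i) the arguments also differ in flavor---you diagonalize via $w=x-\lambda$ and compare coefficients, the paper argues that $\degg r$ must dominate the multiplicative order of every $\alpha\in\FF^{*}$---but both are short.
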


\begin{proof}   Assume $r \in \DD^{\fa}$ and $\degg r \geq 1$,
and let  $\Lambda$ be the set of roots of $r$ in $\overline \FF$.       Since every automorphism of the form $\phi_f$ leaves 
$\DD$ pointwise fixed,     the first part of (i) is clear.      We will assume we have nontrivial automorphisms in $\tau_{\PP}$.
For any  $\tau_{1,\nu} \in \tau_{1,\G}$, the equality $r(x+\nu) =\tau_{1,\nu}(r) =  r(x)$ implies that $\mu+\nu\in\Lambda$ for all
$\mu \in \Lambda$.    Thus $\G$ acts faithfully on $\Lambda$, and roots of $r$ in the same $\G$-orbit have the same multiplicity. This implies that $\degg r$ is divisible by $|\G|$.  

 In particular,  if $\tau_{\PP}=\tau_{1, \G}$,   then  we claim that the polynomial $s$ in Lemma \ref{L:invgen}
 is given by $s(x) = t(x) - t(0)$,   where $t(x) = \prod_{\nu \in \G}  \left(x +\nu \right)$.      Indeed,  it is easy to see  that the polynomial $t$ belongs to $\DD^{\fa}$ in case (a) of (ii).    Moreover, $t(x)-t(0)$ is a monic polynomial of degree $| \G |$ in $\DD^{\fa}$  with zero constant term.   Since every $r \in \DD^{\fa}\setminus \FF$ has $\degg r \geq |\G|$,  $t(x)-t(0)$ is the polynomial $s$ in Lemma \ref{L:invgen}.   Finally, $\FF[t] = \FF[s] = \DD^{\fa}$ to give (ii)(a).       
 
In all the remaining possibilities for $\fa = \aut(\A_h)$, coming from   Theorem~\ref{T:Pinfo}, there exists an automorphism of the form  $\tau_{\alpha,\beta}$, with $(\alpha,\beta) \in \PP$ and $\alpha \neq 1$.  
Since $\degg r \ge 1$, it follows from considering the leading coefficient of $r = \tau_{\alpha, \beta}(r)$ that $\alpha^{\degg r} = 1$, and thus  when $r \not\in \FF$, $\degg r$ is at least the multiplicative order of any $\alpha \in \FF^*$ with $(\alpha, \beta) \in \PP$ for some $\beta \in \FF$.

Now when $\fa = \FF[x] \rtimes \FF^*$ in Theorem~\ref{T:Pinfo},   $\FF^*$ is identified with $\tau_{\PP} = \{\tau_{\alpha, (1-\alpha)\lambda} \mid \alpha \in \FF^*\}$, where $\lambda\in\FF$ is the unique root of $h$.     If $r \in \DD^{\fa}$ with $\degg r \ge 1$, then by the previous paragraph $\degg r$ is greater than or equal to the multiplicative order of every $\alpha \in \FF^*$.  If $\FF$ is infinite, there is no upper bound on the order of elements of $\FF^*$, so no such $r$ can exist.  Hence, we have the second part of (i).      

Assume now $\tau_{\PP} = \tau_{1,\G} \rtimes \langle \tau_{\alpha,\beta} \rangle$, where $\alpha$ is a primitive $\ell$th root of unity for some $\ell > 1$.  It can be further assumed that $\frac{\beta}{1-\alpha}$ is not a root of $r$ (if necessary, replace $r$ by $r+1$). Recall from the proof of Theorem~\ref{T:autosA_hprod} that $\tau_{\alpha, \beta}^i = \tau_{\alpha^i, \frac{1-\alpha^{i}}{1-\alpha}\beta}$ for all $i \geq 0$, so $|\langle \tau_{\alpha,\beta} \rangle|=\ell$. Since  $r \in \DD^\fa$, we have $r(x)=r(\alpha x+\beta)$ and $\alpha \mu+\beta\in\Lambda$ for all $\mu\in\Lambda$. Thus,  we have an action of $\langle \tau_{\alpha,\beta} \rangle$ on $\Lambda$, defined by $\tau_{\alpha, \beta}^i\ .\ \mu := \alpha^i\mu + \frac{1-\alpha^{i}}{1-\alpha}\beta$. Given our assumption that $\frac{\beta}{1-\alpha}\notin\Lambda$, this is a faithful action. Furthermore, the multiplicity is constant within each $\G$-orbit. The above shows that $\degg r$ is divisible by $\ell$.

Finally, note that $| \G|$ and $\ell = | \tau_\PP / \tau_{1, \G} |$ are coprime by Remark \ref{R:coprime}. Therefore, in case (ii)(b) the degree of the polynomial $r$ is divisible by the coprime integers $| \G|$ and $\ell$, so $\degg r\geq \ell | \G|$. Observe that 
\begin{eqnarray*} \tau_{\alpha,\beta}\left(x + \textstyle{\frac{\beta}{\alpha-1}} + \nu\right)  &=&  \alpha x + \beta + \textstyle{\frac{\beta}{\alpha-1}} + \nu \\
&=&  \alpha x + \textstyle{\frac{\alpha \beta}{\alpha-1}} + \nu = \alpha\left (x+ \textstyle{\frac{\beta}{\alpha-1}} +\alpha^{-1}\nu\right).
\end{eqnarray*}
From Lemma~\ref{L:PandG}, we know that $\alpha \G = \G$,  hence $\alpha^{-1}\nu \in \G$.  Thus the polynomial  $t(x) = \prod_{\nu \in \G}  \left(x + \frac{\beta}{\alpha-1}+\nu \right)^\ell$  in (ii)(b) is invariant under the automorphisms in $\tau_{1,\G}$ and also under $\tau_{\alpha, \beta}$, so   $t(x)$ is invariant under $\fa$.  As above, since $\degg t= \ell | \G|$ and any non-constant $r\in \DD^{\fa}$ has $\degg r\geq \ell | \G|$,  we deduce that  $\DD^\fa = \FF[t]$. 
\end{proof}
 
\begin{subsection}{The Center of $\aut(\A_h)$} \end{subsection}
The explicit description of the automorphism group $\aut(\A_h)$ in Theorem \ref{T:autosA_hprod} enables us to determine the center of this group.   \medskip
 
\begin{prop}
Assume $\degg h\geq 1$. Then the center of $\fa = \aut(\A_h)$ is 
$$
\mathsf{Z}(\fa)=\big\{ \phi_{r} \, \mid\, r \in \DD_{\mathsf{Z}}\big\}\ \ \hbox{\rm where} \ \ \DD_{\mathsf{Z}} = \big \{r \in \DD \mid \tau_{\zeta, \varepsilon}(r)=\zeta^{\degg h - 1}r \quad \forall \ (\zeta, \varepsilon)\in\PP\big \}.
$$
In particular, $\FF h'$ is a subgroup of $\mathsf{Z}(\fa)$ (under our usual identification of $r \in \FF[x]$ with the automorphism $\phi_{r}$).\end{prop}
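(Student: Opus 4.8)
The plan is to exploit the semidirect product decomposition $\aut(\A_h) = \FF[x] \rtimes \tau_\PP$ established in Theorem~\ref{T:autosA_hprod}, writing an arbitrary automorphism uniquely as $\omega = \phi_f \circ \tau_{\alpha,\beta}$ with $f \in \FF[x]$ and $(\alpha,\beta) \in \PP$, and determining exactly when such an $\omega$ lies in $\mathsf{Z}(\fa)$. The argument naturally splits into two steps: first, showing that a central $\omega$ must have $\alpha = 1$ and $\beta = 0$, so that $\omega = \phi_f$ actually lies in the normal subgroup $\FF[x]$; second, pinning down which $f$ make $\phi_f$ central, which will turn out to be exactly $f \in \DD_{\mathsf{Z}}$.

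For the first step, I would conjugate the (abelian, normal) subgroup $\FF[x]$ by $\omega$. Because $\FF[x]$ is abelian, $\omega \circ \phi_g \circ \omega^{-1} = \phi_f \circ (\tau_{\alpha,\beta}\circ \phi_g \circ \tau_{\alpha,\beta}^{-1}) \circ \phi_f^{-1} = \tau_{\alpha,\beta}\circ \phi_g \circ \tau_{\alpha,\beta}^{-1}$, and evaluating on the generators $x, \hat y$ exactly as in \eqref{eq:autnorm} shows that this equals $\phi_{g'}$ where $g'(x) = \alpha^{1-\degg h}\, g(\alpha x + \beta)$. Centrality of $\omega$ forces $g' = g$ for every $g \in \FF[x]$. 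Taking $g = 1$ gives $\alpha^{\degg h - 1} = 1$, and then taking $g = x$ gives $\alpha x + \beta = x$, hence $\alpha = 1$ and $\beta = 0$; thus $\omega = \phi_f$.

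For the second step, with $\omega = \phi_f$, I would conjugate $\phi_f$ by an arbitrary $\tau_{\zeta,\varepsilon}$ with $(\zeta,\varepsilon) \in \PP$, obtaining (again from \eqref{eq:autnorm}) that $\tau_{\zeta,\varepsilon}\circ \phi_f \circ \tau_{\zeta,\varepsilon}^{-1} = \phi_{f'}$ with $f'(x) = \zeta^{1-\degg h}\, f(\zeta x + \varepsilon) = \zeta^{1-\degg h}\,\tau_{\zeta,\varepsilon}(f)$. Since $\phi_f$ automatically commutes with every $\phi_g$, centrality of $\phi_f$ is equivalent to $f' = f$ for all $(\zeta,\varepsilon) \in \PP$, i.e. to $\tau_{\zeta,\varepsilon}(f) = \zeta^{\degg h - 1} f$ for all $(\zeta,\varepsilon) \in \PP$ — precisely the defining condition for $f \in \DD_{\mathsf{Z}}$. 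This gives the stated description $\mathsf{Z}(\fa) = \{\phi_r \mid r \in \DD_{\mathsf{Z}}\}$; and since the conditions cutting out $\DD_{\mathsf{Z}}$ are linear in $r$, the set $\DD_{\mathsf{Z}}$ is an $\FF$-subspace of $\DD$, so the corresponding set of $\phi_r$ is a subgroup of $\aut(\A_h)$.

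Finally, for the closing assertion that $\FF h' \le \mathsf{Z}(\fa)$, I would differentiate the relation $h(\zeta x + \varepsilon) = \zeta^{\degg h} h(x)$ (which holds for all $(\zeta,\varepsilon)\in\PP$ by the definition of $\PP$ in \eqref{eq:defP}) with respect to $x$, obtaining $\zeta\, h'(\zeta x + \varepsilon) = \zeta^{\degg h} h'(x)$, hence $\tau_{\zeta,\varepsilon}(h') = h'(\zeta x + \varepsilon) = \zeta^{\degg h - 1} h'$. Therefore $h' \in \DD_{\mathsf{Z}}$, and so is every scalar multiple, which together with the subspace property from the previous step yields $\FF h' \subseteq \DD_{\mathsf{Z}}$. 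The whole argument is essentially bookkeeping with conjugation formulas; the one point that genuinely requires care is that, when testing centrality of $\phi_f\circ\tau_{\alpha,\beta}$ against the conjugation action on $\FF[x]$, one must use a non-constant test element (e.g. $g = x$) to force $\alpha = 1$ and $\beta = 0$, since constant $g$ alone only yields $\alpha^{\degg h - 1} = 1$.
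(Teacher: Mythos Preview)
Your proof is correct and follows essentially the same strategy as the paper's: first show that any central element must lie in the normal abelian subgroup $\FF[x]$ by testing commutation with all $\phi_g$, then characterize which $\phi_f$ are central by testing commutation with all $\tau_{\zeta,\varepsilon}$, and finally verify $h' \in \DD_{\mathsf Z}$ by differentiating the defining relation of $\PP$. The only cosmetic difference is in the first step: the paper uses the test function $f = h$ (exploiting $h(\zeta x + \varepsilon) = \zeta^{\degg h} h$ from the definition of $\PP$) to force $\zeta = 1$ in one stroke, whereas you use $g = 1$ to first get $\alpha^{\degg h - 1} = 1$ and then $g = x$ to force $\alpha = 1$, $\beta = 0$; both arguments are equally valid.
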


\begin{proof} We first argue that the centralizer of the normal subgroup  $\FF[x]$ in $\fa$ is $\FF[x]$ itself, so $\zfa$ is a subgroup of $\FF[x]$.  Take  $\omega \in\fa$ such that  $\omega^{-1} \circ \phi_f \circ \omega = \phi_f$ for all $f \in \FF[x]$, and write $\omega = \phi_r \circ \tau_{\zeta, \varepsilon} \in \aut(\A_h) = \FF[x] \rtimes \tau_{\mathbb P}$.  Then by (\ref{eq:autnorm}), 
$$\phi_f = \omega^{-1} \circ \phi_f \circ \omega = \tau_{\zeta, \varepsilon}^{-1} \circ \phi_r^{-1} \circ \phi_f \circ \phi_r \circ \tau_{\zeta, \varepsilon} = \tau_{\zeta, \varepsilon}^{-1} \circ \phi_f \circ \tau_{\zeta, \varepsilon} = \phi_{\tilde f},$$
where $\tilde f (x) = \zeta^{\degg h -1} f (\zeta^{-1} ( x - \varepsilon))$.  This implies that $f( \zeta x + \varepsilon) = \zeta^{\degg h - 1} f(x)$ for all $f \in \FF[x]$.  Setting $f = h$ gives $\zeta^{\degg h - 1} h=h(\zeta x+\varepsilon)=\zeta^{\degg h} h$, which implies $\zeta=1$. Now set $f(x)=x$ to get $x+\varepsilon=x$, so $\varepsilon=0$. It follows that $\psi =\phi_{r}\in\FF[x]$.  This shows that the centralizer  $\mathsf{C}_{\fa}(\FF[x]) \subseteq \FF[x]$, and the other containment is trivial, so we have equality.  
 
Now  $\omega = \phi_r  \in\zfa$ if and only if  $\phi_{r}$ commutes with $\tau_{\zeta, \varepsilon}$, for every $(\zeta, \varepsilon)\in\PP$.  Equation (\ref{eq:autnorm}) gives that $\tau_{\zeta, \varepsilon}^{-1} \circ \phi_r \circ \tau_{\zeta, \varepsilon} = \phi_{\tilde r}$, 
where $\tilde r(x) = \zeta^{\degg h - 1} r( \zeta^{-1}(x - \varepsilon))$.  Thus the condition that $\phi_r = \tau_{\zeta, \varepsilon}^{-1} \circ \phi_r \circ \tau_{\zeta, \varepsilon}$ is equivalent to the condition that $r ( \zeta x + \varepsilon) = \zeta^{\degg h - 1} r(x)$, from which follows the desired result, 
$$\mathsf{Z}(\fa)=\big\{ \phi_{r} \, \mid\, r \in \DD_{\mathsf Z}\big\}, \ \ \hbox{\rm where} \ \  \DD_{\mathsf Z} = \big\{r \in \DD \mid \tau_{\zeta, \ve}(r)=\zeta^{\degg h - 1}r \quad \forall \ (\zeta, \ve)\in\PP\big\}.$$

Let $(\zeta, \ve)\in\PP$. Then, by definition, $h(\zeta x +\ve) = \zeta^{\degg h} h(x)$.  Taking the derivative of both sides shows that  $\zeta h'(\zeta x +\ve) = \zeta^{\degg h} h'(x)$, so $h'(\zeta x +\ve) = \zeta^{\degg h -1} h'(x)$. If we multiply both sides of this equation by an arbitrary $\lambda \in\FF$,  we see  that  $\FF h' \subseteq \DD_{\mathsf Z}$.   Under  our identification of $\{ \phi_f \mid f \in \FF[x] \}$ with $\FF[x]$, we have $\FF h' \subseteq \mathsf{Z}(\fa)$,  and  
$\FF h'$ is clearly a subgroup under addition.
\end{proof}

\begin{lemma}\label{L:qgen}
Assume $\degg h \ge 1$ and $\DD_{\mathsf Z}  \neq \{ 0 \}$, where $\DD_{\mathsf Z} =  \big\{ r \in \DD \mid \tau_{\zeta, \ve}(r)=\zeta^{\degg h - 1}r \ \forall \ (\zeta, \ve)\in\PP\big\}$.  Suppose $q \neq 0$ is the monic polynomial in $\DD = \FF[x]$ of minimal degree such that $q \in \DD_{\mathsf Z}$.   Then   $\DD_{\mathsf Z} = q \DD^{\fa}$.
\end{lemma}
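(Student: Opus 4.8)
The plan is to adapt the argument used for Lemma~\ref{L:invgen}, with ordinary invariants replaced by the relative invariants collected in $\DD_{\mathsf Z}$ and with careful attention paid to the multiplier $\zeta^{\degg h-1}$. First I would record that although $\DD_{\mathsf Z}$ is not a subalgebra of $\DD$, it \emph{is} a module over $\DD^{\fa} = \DD^{\PP}$: for $r \in \DD_{\mathsf Z}$, $c \in \DD^{\fa}$, and any $(\zeta,\varepsilon) \in \PP$ we have $\tau_{\zeta,\varepsilon}(cr) = \tau_{\zeta,\varepsilon}(c)\,\tau_{\zeta,\varepsilon}(r) = c\cdot \zeta^{\degg h-1} r = \zeta^{\degg h-1}(cr)$, so $cr \in \DD_{\mathsf Z}$. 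In particular $q\DD^{\fa} \subseteq \DD_{\mathsf Z}$, which is the easy inclusion.

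For the reverse inclusion, take an arbitrary $a \in \DD_{\mathsf Z}$ and divide by the monic polynomial $q$: write $a = qf + g$ with $f,g \in \DD$ and $\degg g < \degg q$. Applying $\tau_{\zeta,\varepsilon}$ for $(\zeta,\varepsilon) \in \PP$ and using $\tau_{\zeta,\varepsilon}(a) = \zeta^{\degg h-1}a$ together with $\tau_{\zeta,\varepsilon}(q) = \zeta^{\degg h-1}q$ yields
\[
\zeta^{\degg h-1} q\big(f - \tau_{\zeta,\varepsilon}(f)\big) = \tau_{\zeta,\varepsilon}(g) - \zeta^{\degg h-1} g .
\]
Now $\tau_{\zeta,\varepsilon}$ is degree-preserving on $\DD$ (it is the substitution $x \mapsto \zeta x + \varepsilon$ with $\zeta \neq 0$), so the right-hand side has degree at most $\degg g < \degg q$, while the left-hand side, if nonzero, has degree at least $\degg q$. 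Hence both sides vanish, giving $\tau_{\zeta,\varepsilon}(f) = f$ for all $(\zeta,\varepsilon) \in \PP$ — so $f \in \DD^{\PP} = \DD^{\fa}$ — and $\tau_{\zeta,\varepsilon}(g) = \zeta^{\degg h-1}g$ for all $(\zeta,\varepsilon) \in \PP$ — so $g \in \DD_{\mathsf Z}$.

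Since $g \in \DD_{\mathsf Z}$ and $\degg g < \degg q$, minimality of the degree of $q$ among the nonzero elements of $\DD_{\mathsf Z}$ forces $g = 0$; thus $a = qf$ with $f \in \DD^{\fa}$, which establishes $\DD_{\mathsf Z} \subseteq q\DD^{\fa}$ and completes the proof. (The same comparison of degrees shows $q$ is the unique monic generator of minimal degree, should one wish to note it.) I do not expect a substantive obstacle: the whole argument is a one-variable division with an equivariance twist, and the only place requiring care is the degree bookkeeping — verifying that $\tau_{\zeta,\varepsilon}$ preserves degree and that the factor $\zeta^{\degg h-1}$ cancels correctly on both sides of the displayed identity so that the remainder $g$ is genuinely small and hence zero.
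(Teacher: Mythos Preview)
Your proof is correct and follows essentially the same route as the paper's own argument: both use the division algorithm by the monic polynomial $q$, exploit that each $\tau_{\zeta,\varepsilon}$ is degree-preserving to force the quotient into $\DD^{\fa}$ and the remainder into $\DD_{\mathsf Z}$, and then invoke the minimality of $\degg q$ to kill the remainder. The only cosmetic difference is how the key identity is arranged (you isolate $\zeta^{\degg h-1}q\big(f-\tau_{\zeta,\varepsilon}(f)\big)$ directly, whereas the paper divides through by $\zeta^{\degg h-1}$ and subtracts), but the substance is identical.
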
   
\begin{proof}
If $f = q r$, where $r \in \DD^{\fa}$, then for all  $(\zeta, \varepsilon) \in \PP$, $\tau_{\zeta,\varepsilon}(r)  = 
r$,    and we have   $\tau_{\zeta, \ve} (f) = \tau_{\zeta, \ve} (q) \tau_{\zeta, \ve} (r) = \zeta^{\degg h-1}qr = \zeta^{\degg h-1} f$, so $f \in \DD_{\mathsf Z}$.

For the other containment, assume $f \in \DD_{\mathsf Z}$, and use the division algorithm to write $f =qr + g$ with $r,g \in \FF[x]$ and $\degg g < \degg q$.  Then for  $(\zeta, \ve) \in \PP$,  we have  
$$\tau_{\zeta,\ve}(f) = \zeta^{\degg h - 1} f = \zeta^{\degg h - 1}q \tau_{\zeta, \ve}(r) + \tau_{\zeta, \ve}(g),$$
so that 
$f = q  \tau_{\zeta, \ve}(r) + \zeta^{- \degg h + 1} \tau_{\zeta, \ve}(g)$.  Subtracting $f = qr+g$ from this expression gives
$0 = q\big(\tau_{\zeta,\ve}(r)-r\big) + \zeta^{- \degg h + 1} \tau_{\zeta, \ve}(g)-g$.   
Since $\degg \tau_{\zeta, \ve}(g) = \degg g < \degg q$, this forces $\tau_{\zeta,\ve}(r) = r$, that is $r \in \DD^{\fa}$,  and $g = 0$ by the minimality of $\degg q$.     Thus, we have $f \in q \DD^{\fa}$.
\end{proof}
Combining these results with the description of the invariants $\DD^{\fa}$ in Theorem \ref{T:autinv}, we obtain the main result of this section --  a description of 
the center of $\aut(\A_h)$.  \medskip

\begin{thm}\label{T:centaut} Assume $\degg h\geq 1$. Let $\fa = \aut(\A_h)$, the automorphism group of $\A_h$.   The center $\mathsf{Z}(\fa)$ of 
$\fa$  is $\mathsf{Z}(\fa) = \{ \phi_r  \mid  r \in \DD_{\mathsf Z}\}$,   where
 $\DD_{\mathsf Z}  =  \{r \in \DD \mid r(\zeta x+ \varepsilon) = \zeta^{\degg h-1} r(x) \ \forall \ (\zeta,\varepsilon) \in \PP\}$, and
 $\zfa$ and $\DD_{\mathsf Z}$ are as follows:
\begin{itemize}  
\item[{\rm (1)}]   If  $\fa = \FF[x]$, then  $\DD_{\mathsf Z} = \DD$ and $\mathsf{Z}(\fa) = \FF[x] = \fa$.
\item[{\rm(2)}]   If  $\fa = \FF[x] \rtimes \tau_{1,\G}$,  then $\DD_{\mathsf Z} =   \DD^{\fa} = \FF[t]$ where
$t(x) = \prod_{\nu \in \G} (x+\nu)$.       Hence $\mathsf{Z}(\fa) = \{\phi_r \mid  r \in \FF[t]\}$.  
\item[{\rm(3)}]  If $\fa = \FF[x] \rtimes \FF^*$ and  $|\FF| = \infty$,  then $h = \gamma(x-\lambda)^n$ for some
$\gamma \in \FF^*$ and some $\lambda \in \FF$,  and $\DD_{\mathsf Z} = (x-\lambda)^{n-1}\DD^{\fa} = \FF(x-\lambda)^{n-1}$.
Hence $\mathsf{Z}(\fa) = \{\phi_r \mid  r \in \FF(x-\lambda)^{n-1}\}$.   
\item[{\rm (4)}]  If  $\fa = \FF[x] \rtimes \tau_{\PP}$,  where  $\tau_\PP = \tau_{1,\G} \rtimes \langle \tau_{\alpha,\beta} \rangle$ and
 $\alpha$ is a primitive $\ell$th  root of unity  for some $\ell>1$, then $\DD_{\mathsf Z} = q \FF[t]$,   where
$$q(x) =  \prod_{\nu \in \G}  \left (x + \textstyle{\frac{\beta}{\alpha-1}} + \nu\right)^n, \qquad t(x) = \prod_{\nu \in \G}\left(x + \textstyle{\frac{\beta}{\alpha-1}} + \nu\right)^\ell$$ and $0 \leq n < \ell$ is such that $n |\G| \equiv \degg h-1 \modd \ell$.    Hence,
$\mathsf{Z}(\fa) = \{\phi_r \mid  r \in q \FF[t]\}$.  
\end{itemize}    
\end{thm}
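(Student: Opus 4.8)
The plan is to combine three results already established. The preceding proposition identifies $\mathsf{Z}(\fa)$ with $\{\phi_r \mid r \in \DD_{\mathsf Z}\}$ and so reduces everything to computing $\DD_{\mathsf Z} = \{r \in \DD \mid \tau_{\zeta,\varepsilon}(r) = \zeta^{\degg h - 1} r \text{ for all } (\zeta,\varepsilon) \in \PP\}$; Lemma~\ref{L:qgen} says that as soon as $\DD_{\mathsf Z} \neq \{0\}$ it equals $q\,\DD^{\fa}$ for the monic polynomial $q$ of least degree in $\DD_{\mathsf Z}$; and Theorem~\ref{T:autinv} determines $\DD^{\fa}$. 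By Theorem~\ref{T:Pinfo} and Corollary~\ref{C:Pinfo}, $\fa$ has one of the four listed shapes, so in each case the whole task is to exhibit a nonzero $q \in \DD_{\mathsf Z}$ and check it has minimal degree; then $\DD_{\mathsf Z} = q\,\DD^{\fa}$. I will use the following simplification throughout: since $\tau_{\zeta_1,\varepsilon_1} \circ \tau_{\zeta_2,\varepsilon_2} = \tau_{\zeta_1\zeta_2,\,\varepsilon_1\zeta_2 + \varepsilon_2}$ by \eqref{eq:sigmamult} and the scalar components multiply, for a fixed $r$ the automorphisms $\tau_{\zeta,\varepsilon} \in \tau_\PP$ with $\tau_{\zeta,\varepsilon}(r) = \zeta^{\degg h - 1}r$ form a subgroup of $\tau_\PP$; hence membership of $r$ in $\DD_{\mathsf Z}$ can be checked on any generating set of $\tau_\PP$.

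Cases (1)--(3) are quick. If $\fa = \FF[x]$, then $\PP = \{(1,0)\}$, the defining condition on $\DD_{\mathsf Z}$ is vacuous, $\DD_{\mathsf Z} = \DD$, and $\mathsf{Z}(\fa) = \FF[x] = \fa$. If $\fa = \FF[x] \rtimes \tau_{1,\G}$, then every $(\zeta,\varepsilon) \in \PP$ has $\zeta = 1$, so $\zeta^{\degg h-1} = 1$ and $\DD_{\mathsf Z}$ is precisely the subring of $\DD$ fixed by $\tau_{1,\G}$, which is $\DD^{\fa} = \FF[t]$ with $t(x) = \prod_{\nu \in \G}(x+\nu)$ by Theorem~\ref{T:autinv}(ii)(a); thus $q = 1$ works. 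If $\fa = \FF[x] \rtimes \FF^*$ with $|\FF| = \infty$, then Theorem~\ref{T:Pinfo} forces $h = \gamma(x-\lambda)^n$ with $\gamma \in \FF^*$, $\lambda \in \FF$, $n = \degg h$, and $\PP = \{(\alpha,(1-\alpha)\lambda) \mid \alpha \in \FF^*\}$; one computes directly $\tau_{\alpha,(1-\alpha)\lambda}\big((x-\lambda)^{n-1}\big) = (\alpha x - \alpha\lambda)^{n-1} = \alpha^{n-1}(x-\lambda)^{n-1}$, so $(x-\lambda)^{n-1} \in \DD_{\mathsf Z}$, and since $\DD^{\fa} = \FF$ by Theorem~\ref{T:autinv}(i), Lemma~\ref{L:qgen} gives $\DD_{\mathsf Z} = \FF q_0$, one-dimensional over $\FF$; the monic element $(x-\lambda)^{n-1}$ must therefore be $q_0$, and $\DD_{\mathsf Z} = \FF(x-\lambda)^{n-1}$.

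Case (4) carries the weight of the proof. Set $c = \frac{\beta}{\alpha-1}$ and substitute $u = x + c$; because $-c = \frac{\beta}{1-\alpha}$ is the fixed point of $\sigma_{\alpha,\beta}$, a short calculation shows that in the variable $u$ the automorphism $\tau_{1,\nu}$ acts by $u \mapsto u + \nu$ and $\tau_{\alpha,\beta}$ by $u \mapsto \alpha u$. Writing $q(x) = \prod_{\nu \in \G}(u+\nu)^n$, invariance of $q$ under each $\tau_{1,\mu}$ follows by reindexing the product over the group $\G$, while $\tau_{\alpha,\beta}(q) = \prod_{\nu \in \G}(\alpha u + \nu)^n = \alpha^{n|\G|}\prod_{\nu \in \G}(u + \alpha^{-1}\nu)^n = \alpha^{n|\G|}q$, using $\alpha\G = \G$ from Lemma~\ref{L:PandG}(3). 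Thus $q \in \DD_{\mathsf Z}$ exactly when $\alpha^{n|\G|} = \alpha^{\degg h - 1}$, i.e. $n|\G| \equiv \degg h - 1 \modd \ell$; since $|\G|$ and $\ell$ are coprime (immediately if $\G = \{0\}$, and by Remark~\ref{R:coprime} otherwise), a unique such $n$ with $0 \le n < \ell$ exists, and then $q \in \DD_{\mathsf Z}$ by the reduction to generators noted above. By Theorem~\ref{T:autinv}(ii)(b), $\DD^{\fa} = \FF[t]$ with $t(x) = \prod_{\nu \in \G}(u+\nu)^\ell$ of degree $\ell|\G|$, so Lemma~\ref{L:qgen} gives $\DD_{\mathsf Z} = q_0\,\FF[t]$; since $\degg q = n|\G| < \ell|\G| = \degg t$ and $q \in q_0\,\FF[t]$, the $\FF[t]$-cofactor of $q$ is a nonzero scalar, forcing $q = q_0$ and $\DD_{\mathsf Z} = q\,\FF[t]$, which is the statement in (4).

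The only genuinely delicate part is Case (4): keeping $q$ and $t$ straight under the change of variable $u = x + c$, invoking $\alpha\G = \G$ (Lemma~\ref{L:PandG}(3)) to justify the two reindexings, and using that $|\G|$ and $\ell$ are coprime (Remark~\ref{R:coprime}) to ensure the exponent $n$ with $0 \le n < \ell$ is well defined. Once $q$ is in hand, the identification $\DD_{\mathsf Z} = q\,\DD^{\fa}$ is forced in every case by Lemma~\ref{L:qgen} together with the degree comparison $\degg q < \degg t$ (or $\DD^{\fa} = \FF$ in Case (3)), and the displayed formulas for $\mathsf{Z}(\fa)$ follow from the preceding proposition.
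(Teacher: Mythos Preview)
Your proof is correct and follows essentially the same approach as the paper: reduce to computing $\DD_{\mathsf Z}$ via the preceding proposition, invoke Lemma~\ref{L:qgen} to write $\DD_{\mathsf Z}=q\,\DD^{\fa}$, take $\DD^{\fa}$ from Theorem~\ref{T:autinv}, and then in each case exhibit a candidate for $q$ and verify minimality by a degree comparison. The only cosmetic differences are your explicit ``check on generators'' subgroup observation (implicit in the paper), your one-dimensionality argument in Case~(3) in place of the paper's direct degree constraint $\alpha^{\degg q}=\alpha^{\degg h-1}$ for all $\alpha\in\FF^*$, and the change of variable $u=x+c$ in Case~(4), which streamlines the same computation the paper does.
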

 
\begin{proof} It will be seen in the course of the proof that in all cases $\DD_{\mathsf Z}\neq \{ 0\}$, so from Lemma \ref{L:qgen},  we know that $\DD_{\mathsf Z} = q \DD^{\fa}$,  where 
$q$ is the  monic polynomial of minimal degree in $\DD_{\mathsf Z}$.    Since we have determined 
$\DD^{\fa}$ in Theorem \ref{T:autinv},  we need to find the polynomial $q$.      
For all $(\zeta,\ve) \in \PP$
we have from $q(\zeta x+\ve) = \zeta^{\degg h -1}q(x)$ that
$\zeta^{\degg q} = \zeta^{\degg h - 1}$. 

Let's consider the various cases arising from Theorem \ref{T:Pinfo}  and Corollary \ref{C:Pinfo}:
\begin{itemize}
\item [{\rm (i)}]   If $\fa = \FF[x]$ or $\fa = \FF[x] \rtimes \tau_{1,\G}$,  then $\DD_{\mathsf Z} = \DD^{\fa} = \A_h^{\fa}$ (and $q = 1$). 
\item [{\rm (ii)}]   If  $\fa  = \FF[x] \rtimes \FF^*$,  where $|\FF| = \infty$ and $\FF^*$ is identified with the group
$\{\tau_{\alpha,(1-\alpha)\lambda} \mid \alpha \in \FF^*\}$, then by the above, 
 $\alpha^{\degg q} = \alpha^{\degg h - 1}$ for all $\alpha \in \FF^*$,   which  forces $\degg q = \degg h -1$.   Recall that this case
occurs when $h(x) = \gamma (x-\lambda)^n$ for some $\gamma\in\FF^*$,  $\lambda \in \FF$, and $n \geq 1$.    The  monic polynomial $(x-\lambda)^{n-1}$ has degree equal to $\degg h-1$, and it is in $\DD_{\mathsf Z}$.
Thus, $q(x) = (x-\lambda)^{n-1}$,  and $\DD_{\mathsf Z} = (x-\lambda)^{n-1} \DD^{\fa}$. 
\item [{\rm (iii)}]   In all the remaining cases,  the group $\tau_{\PP}$ is finite.   We may assume $| \tau_{\PP}/\tau_{1,\G}| = \ell > 1$ or
else we are in case (2).      Write $\tau_{\PP} = \tau_{1,\G} \rtimes \langle \tau_{\alpha,\beta} \rangle$ where $\alpha$ is a primitive
$\ell$th root of 1.    Note that $\ell$ and $|\G|$ are coprime by Remark \ref{R:coprime}.  

We have shown that $\DD^\fa = \FF[t]$ where $t(x) = \prod_{\nu \in \G} \left(x + \frac{\beta}{\alpha-1} + \nu\right)^\ell$.   
Since $| \G |$ is invertible mod $\ell$ we can find $n$ so $0 \leq n < \ell$ and $n |\G| \equiv  \degg h-1 \modd \ell$.  
Set  $u(x) = \prod_{\nu \in \G} \left (x + \frac{\beta}{\alpha-1} + \nu\right)^n$.  Now $u(x + \xi) = u(x)$ for all $\xi \in \G$,
and $u(\alpha x+\beta) = \alpha^{n|\G|}u(x) = \alpha^{\degg h-1} u(x)$.   These expressions show that
$u \in \DD_{\mathsf Z}$.       Hence, there exists a polynomial $f(t) \in \FF[t]$ so that $u = q f(t)$.     However, since
the degree of $t$  in $x$ is  $\ell |\G|$ and the degree of $u$ in $x$ is $n|\G|$ and $n < \ell$, it must be that $f(t) \in \FF$. 
But since both $q$ and $u$ are monic, this says $q = u$.\end{itemize}  \end{proof}

\begin{exam}  Assume $h(x) = x^n$ for some $n \geq 1$.    Then by Theorem \ref{T:Pinfo},   $\fa = \aut(\A_h) = \FF[x] \rtimes \FF^*$, where $\FF^*$ is identified with the automorphisms $\{\tau_{\alpha,0}  \mid  \alpha \in \FF^*\}$.  If $\FF$ is infinite, the monic polynomial generator of $\DD_{\mathsf Z}$ is  $q(x) = x^{n-1}$ by Theorem \ref{T:centaut}, and according to Theorem \ref{T:autinv},  the invariants are given by $\DD^{\fa} = \FF$. Thus, in this case $\DD_{\mathsf Z} = \FF x^{n-1}$  and  $\mathsf{Z}(\fa) = \{\phi_f \mid  f \in \FF x^{n-1}\}$.   If $| \FF^*| = \ell < \infty$, then part (4) of Theorem \ref{T:centaut} shows that the monic polynomial generator of $\DD_{\mathsf Z}$ is $q(x) = x^{m}$, where $0 \le m < \ell$ and $m \equiv n-1 \modd \ell$.  Now Theorem \ref{T:autinv} asserts that $\DD^{\fa} = \FF[t]$,  where $t(x) = x^\ell$,  thus $\DD_{\mathsf Z} = x^{m} \FF[x^\ell]$ and $\mathsf{Z}(\fa) = \{\phi_f \mid  f \in x^{m}\FF[x^\ell]\}$.
\end{exam}
 
\begin{remark}  In the case of the Weyl algebra,  the center of $\aut(\A_{1})$ is trivial by  \cite[Prop.~3]{KA11}.    However, when $h \not \in \FF^*$, we can have the opposite extreme.  For example, if  $h=x^{2}(x-1)$, then $\PP=\{(1, 0)\}$,  as any permutation of the roots of $h$ has to fix $0$ and $1$ (since they have different multiplicities),  and the affine permutations determined by elements of $\PP$ can have at most $1$ fixed point, except for the identity map.   So $\aut (\A_{h})=\FF[x]$ is commutative,  and its center is the entire automorphism group in this case. 
\end{remark}

\begin{subsection}{Automorphisms of the Weyl Algebra}\label{sec:automWeyl}\end{subsection} 

In this section we contrast the previous results on automorphisms of $\A_h$ for $h \not\in \FF$, with known results on the automorphisms of the Weyl algebra $\A_1$.  The Weyl algebra has more automorphisms because of its high degree of symmetry.

Let $\mathsf{SL_2(\FF)}$ denote the special linear group of $2 \times 2$ matrices over $\FF$ of
determinant 1.   Each matrix  $\tt{S} =  \left( \begin{smallmatrix} \alpha & \gamma \\  \beta & \varepsilon \end{smallmatrix} \right)  \in \mathsf{SL_2(\FF)}$ determines an automorphism $\varphi_{\tt S}$  of $\A_1$ given by 

\begin{equation}   x  \mapsto  \alpha x + \beta y,   \qquad    y \mapsto   \gamma x + \varepsilon y. \end{equation}

The matrix $\tt T:= \left( \begin{smallmatrix} 0 & 1 \\  -1 & 0 \end{smallmatrix} \right) \in \mathsf{SL_2(\FF)}$
corresponds to the automorphism $\tau := \varphi_{\tt T}$  of $\A_1$ given by  $x \mapsto -y$, \, $y \mapsto x$. 
And $\tau^{-1}$ corresponds to the automorphism with $x \mapsto y$, \, $y \mapsto -x$.   Note that $\tau^2 = \mathsf{-I}$, \, $\tau^4 = \mathsf{I}$, \, and  $\tau^3 = \tau^{-1} = \left( \begin{smallmatrix} 0 & -1 \\  1 & \ 0 \end{smallmatrix} \right)$.    
\medskip 

For each $f \in \FF[x]$,  there is an automorphism $\phi_f$ with $\phi_f(x) = x$ and
$\phi_f(y) = y+f$, just as for the algebras $\A_h$.   However, in the $\A_1$ case, observe that
\begin{eqnarray*} \left(\tau^{-1} \circ  \phi_{-f} \circ \tau \right)(x) &=& x + f(y) \\
\left(\tau^{-1} \circ  \phi_{-f} \circ \tau\right)(y) &=&  y. \end{eqnarray*}
Hence,  the automorphisms  $\psi_f := \tau^{-1} \circ  \phi_{-f} \circ \tau$ for $f \in \FF[x]$ give the analogues of the maps $\phi_f$ but with the roles of $x$ and $y$ interchanged.  \medskip

\begin{remark}  Unlike the situation for $\A_h$, with $\degg h \ge 1$, 
the subgroup $\FF[x]$ fails to be normal in $\aut(\A_1)$, which can be seen
from the above calculation.   \end{remark} 

The following provide generating sets of automorphisms for $\aut(\A_1)$.  (Compare 
\cite {MakLim84} and \cite{stafford87},  and see also \cite{KA11} for part (iii).)

\begin{thm}\label{T:autgens}   Each of the following sets  gives a generating set for the automorphism
group $\aut(\A_1)$:
\begin{itemize} 
\item[{\rm (i)}] $\{\phi_f  \mid f \in \FF[x]\} \cup \{\psi_f \mid f \in \FF[x]\}$,
\item[{\rm (ii)}]  $\{\varphi_{\tt S} \mid {\tt S} \in \mathsf{SL}_2(\FF) \}  \cup
\{\phi_f  \mid f \in \FF[x]\}$,
\item[{\rm (iii)}] $\{\tau, \phi_f  \mid f \in \FF[x]\}$,
\item[{\rm (iv)}] $\{\tau, \psi_f  \mid f \in \FF[x]\}$.
\end{itemize}
\end{thm}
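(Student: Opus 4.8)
The plan is to prove the theorem by showing that the four sets generate one and the same subgroup $G$ of $\aut(\A_1)$, and then identifying $G$ with all of $\aut(\A_1)$. The substantive input is the fact that the set in (iii), namely $\{\tau\}\cup\{\phi_f\mid f\in\FF[x]\}$, generates $\aut(\A_1)$: this is Dixmier's theorem \cite{Dix68} when $\chara(\FF)=0$, and it holds over an arbitrary field by the work in \cite{MakLim84} and \cite{stafford87}. I would invoke that result and devote the proof to the three equivalences, which are elementary and characteristic-free.

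First I would compare (iii) with (iv). Since $\psi_f=\tau^{-1}\circ\phi_{-f}\circ\tau$ by definition, every $\psi_f$ lies in the group generated by (iii); conversely $\phi_{-f}=\tau\circ\psi_f\circ\tau^{-1}$ lies in the group generated by (iv), so these two subgroups coincide. Next I would compare (i) with (iii): the group generated by (i) is visibly contained in the group generated by (iii), since both $\phi_f$ and $\psi_f$ are words in $\tau$ and the $\phi_g$. For the reverse inclusion it suffices to express $\tau$ itself as a word in the $\phi$'s and $\psi$'s, and a short computation on the generators $x,y$ — using $\phi_x(x)=x$, $\phi_x(y)=y+x$, $\psi_{-x}(x)=x-y$, $\psi_{-x}(y)=y$ — gives $\tau=\phi_x\circ\psi_{-x}\circ\phi_x$. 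Hence $\tau$ lies in the group generated by (i), and the two subgroups coincide.

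Finally I would compare (ii) with (iii). On one hand $\tau=\varphi_{\tt T}$ and all $\phi_f$ lie in the group generated by (ii). On the other hand, a direct substitution check shows that ${\tt S}\mapsto\varphi_{\tt S}$ is a group homomorphism $\mathsf{SL}_2(\FF)\to\aut(\A_1)$, so its image is generated by the images of the elementary transvections $\bigl(\begin{smallmatrix}1&0\\ c&1\end{smallmatrix}\bigr)$ and $\bigl(\begin{smallmatrix}1&b\\ 0&1\end{smallmatrix}\bigr)$, which are respectively $\psi_{cx}$ (sending $x\mapsto x+cy$, $y\mapsto y$) and $\phi_{bx}$ (sending $x\mapsto x$, $y\mapsto y+bx$). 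Since elementary transvections generate $\mathsf{SL}_2(\FF)$ over any field, every $\varphi_{\tt S}$ is a product of such and hence lies in the group generated by (iii) (equivalently (iv)); combined with the $\phi_f$ this shows the group generated by (ii) equals the one generated by (iii). Thus all four sets generate the same subgroup $G$, and by the cited theorem $G=\aut(\A_1)$.

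I expect the only genuine obstacle to be the cited generation theorem itself. Dixmier's proof proceeds by a filtration argument: assign $\deg x=\deg y=1$, so that the associated graded ring is the commutative polynomial algebra $\FF[x,y]$ because $[y,x]=1$ lowers degree by $2$; if $\omega\in\aut(\A_1)$ is not affine, the relation $[\omega(y),\omega(x)]=1$ forces the leading forms of $\omega(x)$ and $\omega(y)$ to Poisson-commute in $\FF[x,y]$, hence (by an elementary lemma on commuting homogeneous polynomials in two variables) to be scalar powers of a common homogeneous polynomial, which permits lowering $\deg\omega(x)+\deg\omega(y)$ by composing $\omega$ with a suitable $\phi_f$ or $\psi_f$; one then inducts down to the affine case, and every affine automorphism is a product of the transvections $\phi_{bx}$, $\psi_{cx}$ and translations $\phi_c$, $\psi_c$ (all among the $\phi_f$, $\psi_f$). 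This runs verbatim only when $\chara(\FF)=0$; the positive-characteristic case requires the more careful analysis of \cite{MakLim84} and \cite{stafford87}. The three equivalences above, by contrast, are purely formal and do not depend on the characteristic.
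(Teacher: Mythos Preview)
Your proposal is correct. The paper itself gives no proof of this theorem; it merely records the statement and cites \cite{MakLim84}, \cite{stafford87}, and \cite{KA11} for the substantive generation result, which is exactly what you do. Your additional contribution---the elementary, characteristic-free verification that the four sets generate the same subgroup (via $\psi_f=\tau^{-1}\circ\phi_{-f}\circ\tau$, the identity $\tau=\phi_x\circ\psi_{-x}\circ\phi_x$, and the transvection decomposition of $\mathsf{SL}_2(\FF)$)---is correct and in fact supplies detail the paper omits.
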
 

\begin{subsection}{Dixmier's Conjecture}\end{subsection}  In \cite[Problem 1]{Dix68}, Dixmier asked if every algebra endomorphism of  the $n$th Weyl algebra must be an automorphism when $\chara(\FF) = 0$.   This conjecture was shown to be equivalent to the longstanding Jacobian conjecture (see \cite{Tsu} and \cite{BK}).   In this section, we explore whether monomorphisms for the algebra $\A_h$ with $\degg h \geq 1$ necessarily are automorphisms.   \medskip

\begin{prop}
Assume $h=x^{n}$ for some $n\geq 1$,  and fix  $k\geq 1$. 
When  $\chara(\FF)=p>0$ assume further that $p$ does not divide $k$. 
Then there is an algebra monomorphism $\eta_{k} : \A_{h}\rightarrow \A_{h}$ such that $\eta_{k}(x)=x^{k}$ and $\eta_{k}(\hat y)=\frac{1}{k}x^{(k-1)(n-1)}\hat y$.  If $k\geq 2$,  then $\eta_{k}$ is not an automorphism.
\end{prop}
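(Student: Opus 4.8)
The plan is to establish three things in turn: that $\eta_k$ is a well-defined algebra endomorphism of $\A_h$, that it is injective, and that it is not surjective once $k\ge 2$.

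\emph{Well-definedness.} Since $\A_h$ is presented by generators $x,\hat y$ subject to the single relation $[\hat y,x]=h=x^n$, it is enough to check that the proposed images $X:=x^k$ and $Y:=\tfrac1k x^{(k-1)(n-1)}\hat y$ satisfy $[Y,X]=X^n$; note that the hypothesis ($\chara(\FF)=0$, or $\chara(\FF)=p\nmid k$) is exactly what makes $\tfrac1k\in\FF^*$, so that $Y\in\A_h$. Using $[\hat y,f]=f'h$ for $f\in\DD$, I would compute
$$[Y,X]=\tfrac1k x^{(k-1)(n-1)}[\hat y,x^k]=\tfrac1k x^{(k-1)(n-1)}\cdot kx^{k-1}\cdot x^n=x^{(k-1)(n-1)+(k-1)+n}.$$
The exponent arithmetic $(k-1)(n-1)+(k-1)+n=(k-1)n+n=kn$ then gives $[Y,X]=x^{kn}=(x^k)^n=X^n$, so $\eta_k$ is a well-defined algebra homomorphism $\A_h\to\A_h$.

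\emph{Injectivity.} By Theorem~\ref{thm:basicFactsOre}, $\{\hat y^i\mid i\ge 0\}$ is a basis of $\A_h$ as a free left $\DD$-module. Writing $d=(k-1)(n-1)$, a straightforward induction on $i$ using $\hat y g=g\hat y+g'x^n$ for $g\in\DD$ shows that
$$\eta_k(\hat y)^i=\bigl(\tfrac1k x^{d}\hat y\bigr)^i=\tfrac{1}{k^i}x^{id}\hat y^i+\bigl(\text{a }\DD\text{-combination of }\hat y^j,\ j<i\bigr).$$
Hence for a nonzero $a=\sum_{i=0}^m f_i\hat y^i$ with $f_m\ne 0$, the coefficient of $\hat y^m$ in $\eta_k(a)=\sum_i f_i(x^k)\,\eta_k(\hat y)^i$ equals $\tfrac{1}{k^m}x^{md}f_m(x^k)$, which is nonzero because $\DD$ is a domain. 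Thus $\eta_k(a)\ne 0$, and $\eta_k$ is a monomorphism. This inductive leading-term identity is the one place that requires a little care; the rest is direct computation.

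\emph{Not an automorphism for $k\ge 2$.} Since $h=x^n$ has degree $n\ge 1$, Theorem~\ref{T:isos:A_h}(ii) tells us that any automorphism $\omega$ of $\A_h$ has $\omega(x)=\alpha x+\beta$ with $\alpha\in\FF^*$, hence $\degg\omega(x)=1$; but $\eta_k(x)=x^k$ has degree $k\ge 2$, so $\eta_k$ cannot be an automorphism. (Alternatively, without invoking Theorem~\ref{T:isos:A_h}: if $\eta_k$ were onto, choose $a$ with $\eta_k(a)=x$; the $\hat y$-degree analysis above forces $a\in\DD$, so $x=a(x^k)\in\FF[x^k]$, which is impossible for $k\ge 2$.)
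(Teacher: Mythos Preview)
Your proof is correct and follows essentially the same approach as the paper: the same commutator computation for well-definedness, the same leading-term-in-$\hat y$ argument for injectivity, and (in your alternative) the same observation that $\mathrm{im}(\eta_k)\cap\DD=\FF[x^k]$ misses $x$ when $k\ge 2$. Your first option for non-surjectivity, invoking Theorem~\ref{T:isos:A_h}(ii), is a legitimate shortcut since that result is already available, whereas the paper argues directly from the image computation.
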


\begin{proof} Note that 
\begin{align*}
[\eta_{k}(\hat y), \eta_{k}(x)] &=\left[ \textstyle{\frac{1}{k}}x^{(k-1)(n-1)}\hat y, x^{k} \right] =  \textstyle{\frac{1}{k}}x^{(k-1)(n-1)} [ \hat y, x^{k} ]\\ &= \textstyle{ \frac{1}{k}} x^{(k-1)(n-1)} kx^{k-1+n}=x^{kn}=\eta_{k}(x^{n}),
\end{align*}
so there is an endomorphism $\eta_{k}$ as stated. This endomorphism is injective because 
\begin{align*}
\eta_{k}(x^{i}\hat y^{j}) &= \textstyle{\frac{1}{k^{j}}}x^{ik}\left(x^{(k-1)(n-1)}\hat y\right)^{j}\\
&= \textstyle{\frac{1}{k^{j}}}x^{j(k-1)(n-1)+ik}\, \hat y^{j} + \text{lower order terms in $\hat y$.}
\end{align*}
The above also shows that $\mathsf{im}(\eta_{k})\cap\DD=\eta_{k}(\DD)=\FF[x^{k}]$. If $k\geq 2$, then $x\notin\mathsf{im}(\eta_{k})$.   Thus $\eta_{k}$ fails to be surjective and consequently is not an automorphism. 
\end{proof}

When $\chara(\FF) = p > 0$, it is known (e.g.~Sec.~3.1 of \cite{KA11}) that Dixmier's conjecture fails to hold for $\A_1$.  The next result shows that the analogue of Dixmier's conjecture fails to hold for $\A_{h}$ for any $h$ with  $\degg h \ge 1$.

\begin{prop} Assume $\chara(\FF) = p > 0$ and  $\degg h \ge 1$.   Let $c \in \mathsf{C}_{\A_h}(x)= \FF[x,h^py^p]$.  Then there is an algebra monomorphism $\kappa_c : \A_h \to \A_h$ such that  $\kappa_c (\hat y) = \hat y + c$ and $\kappa_c (r) = r$ for all $r \in \FF[x]$.   If $c \not\in \FF[x]$, then $\kappa_c$ is not an automorphism of $\A_h$.
\end{prop}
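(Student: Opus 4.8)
The plan is to build $\kappa_c$ directly from the presentation of $\A_h$ by generators and relations, then to establish injectivity by a leading--term argument in the variable $\hat y$, and finally to rule out surjectivity (when $c\notin\FF[x]$) by showing that $\hat y$ does not lie in the image. For existence, recall that $\A_h$ is the unital associative $\FF$-algebra on generators $x,\hat y$ subject to the single relation $\hat y x-x\hat y=h$. Since $c\in\mathsf{C}_{\A_h}(x)$, one has $[\hat y+c,x]=[\hat y,x]+[c,x]=h$, and $h=h(x)$ is fixed by $x\mapsto x$; hence the assignment $x\mapsto x$, $\hat y\mapsto\hat y+c$ is compatible with the defining relation and extends uniquely to an algebra endomorphism $\kappa_c$ of $\A_h$, which fixes $\FF[x]$ pointwise since $\kappa_c(f(x))=f(\kappa_c(x))=f(x)$.

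For injectivity, give $\A_h=\bigoplus_{i\ge 0}\DD\hat y^i$ the filtration $F_n=\bigoplus_{i=0}^n\DD\hat y^i$. Because $\hat y x-x\hat y=h\in\DD=F_0$, one gets $F_iF_j\subseteq F_{i+j}$, and $\mathsf{gr}\,\A_h\cong\FF[\bar x,\bar{\hat y}]$ is a commutative polynomial domain, so passing to symbols shows that the ``leading term in $\hat y$'' is multiplicative. By \eqref{eq:centgen}, $h^py^p=\hat y^p-\frac{\delta^p(x)}{h}\hat y$ has degree $p$ in $\hat y$ with leading coefficient $1$; hence, writing $c\in\FF[x,h^py^p]=\mathsf{C}_{\A_h}(x)$ as a polynomial in $\hat y$ over $\FF[x]$, the element $\hat y+c$ has some degree $e$ in $\hat y$ with $e=1$ when $c\in\FF[x]$ and $e$ a multiple of $p$ satisfying $e\ge p$ otherwise, and in every case its leading $\hat y$-coefficient $g\in\FF[x]$ is nonzero (with $g=1$ when $c\in\FF[x]$). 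For a nonzero $a=\sum_{i=0}^m r_i\hat y^i$ with $r_i\in\DD$ and $r_m\neq 0$, the $i$-th summand of $\kappa_c(a)=\sum_{i=0}^m r_i(\hat y+c)^i$ has leading $\hat y$-term $r_ig^i\hat y^{\,ei}$; the exponents $ei$ for $0\le i\le m$ are pairwise distinct, so no cancellation occurs and $\kappa_c(a)$ has leading term $r_mg^m\hat y^{\,em}$, which is nonzero because $\DD$ is a domain. Therefore $\kappa_c(a)\neq 0$, and $\kappa_c$ is a monomorphism.

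Finally, suppose $c\notin\FF[x]$, so $e\ge p\ge 2$. By the computation above, $\kappa_c(a)$ has degree $em\ge 2$ in $\hat y$ whenever $a$ has degree $m\ge 1$, and $\kappa_c(a)\in\FF[x]$ when $m=0$; hence no element of $\mathsf{im}(\kappa_c)$ has degree exactly $1$ in $\hat y$, so $\hat y\notin\mathsf{im}(\kappa_c)$ and $\kappa_c$ is not surjective, hence not an automorphism. The only step requiring real care is this $\hat y$-degree bookkeeping: checking that $\mathsf{gr}\,\A_h$ is a (commutative) polynomial domain, that the degree of $\hat y+c$ in $\hat y$ is $1$ or a multiple of $p$ at least $p$ with a nonzero polynomial leading coefficient, and that the distinct summands of $\kappa_c(a)$ contribute leading terms in distinct $\hat y$-degrees so that no cancellation can destroy the top term; everything else is formal.
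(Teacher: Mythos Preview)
Your proof is correct. The existence step is identical to the paper's. The other two steps differ in instructive ways.

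For injectivity, the paper simply asserts that $(\hat y+c)^i=\hat y^i+b$ with $b\in\bigoplus_{0\le j<i}\DD\,\hat y^j$, but this is only literally true when $c\in\FF[x]$; if $c\notin\FF[x]$ then $\hat y+c$ has $\hat y$-degree $e\ge p$, so $(\hat y+c)^i$ has $\hat y$-degree $ei$, not $i$. Your associated-graded argument tracks the actual degree $e$ and leading coefficient $g$ of $\hat y+c$, observes that the summands $r_i(\hat y+c)^i$ contribute symbols in pairwise distinct degrees $ei$, and concludes that the top one survives. This is the correct version of the intended leading-term idea, so in effect you have filled a small gap in the paper's write-up.

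For the failure of surjectivity when $c\notin\FF[x]$, the paper takes a shortcut through earlier structure theory: by Theorem~\ref{T:isos:A_h}\,(ii), every automorphism of $\A_h$ with $\degg h\ge 1$ sends $\hat y$ into $\FF^*\hat y+\FF[x]$, whereas $\kappa_c(\hat y)=\hat y+c\notin\FF^*\hat y+\FF[x]$. Your route is self-contained: you show the $\hat y$-degrees occurring in $\mathrm{im}(\kappa_c)$ lie in $e\ZZ_{\ge 0}$ with $e\ge 2$, so $\hat y\notin\mathrm{im}(\kappa_c)$. The paper's approach is shorter given the prior classification of automorphisms; yours is independent of it and yields slightly more information about the image.
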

\begin{proof}
Note that 
$$[ \kappa_c (\hat y), \kappa_c (x)] = [ \hat y + c , x] = [ \hat y, x] = h = \kappa_c (h),$$
so $\kappa_c : \A_h \to \A_h$ defines an algebra homomorphism.  That $\kappa_c$ is injective follows from the fact that $(\hat y + c)^i = \hat y^i + b$ for $b \in \bigoplus_{0 \le j < i} \DD \hat y^j$.  

Since $\kappa_c$ is an algebra monomorphism of $\A_h$, it follows that $\kappa_c \in \aut(\A_h)$ if and only if $\kappa_c$ is surjective.    If $\kappa_c \in \aut (\A_h)$, then by Theorem \ref{T:isos:A_h}, $\kappa_c (\hat y) \in \FF^* \hat y + \FF[x]$.  But since $\kappa_c (\hat y) = \hat y + c$, which is not in $\FF^* \hat y + \FF[x]$ whenever $c \not\in \FF[x]$,  it follows that $\kappa_c$ cannot be surjective if $c \not\in \FF[x]$. 
\end{proof}

\begin{subsection}{Restriction and Extension of Automorphisms}\end{subsection}

We assume here that there is an embedding   
of $\A_g$ into $\A_f$ where $f,g \in \FF[x]$.  We determine when an automorphism
of $\A_g$ extends to one of $\A_f$, and in the opposite direction, when an automorphism of $\A_f$ restricts to one of $\A_g$.  
 
\medskip
\begin{thm} Assume $\degg f \ge 0$, $\degg g\geq 1$, and $g = rf$.  Regard $\A_g = \langle x,\tilde y,1\rangle  \subseteq \A_f = \langle x,y,1\rangle$ with
$\tilde y = yr$.  
\begin{itemize}\item[{\rm (i)}] Suppose that $\omega  = \phi_q \circ \tau_{\alpha,\beta}  \in \aut (\A_g)$ so that  
$$\omega (x)=\alpha x+\beta, \quad \omega(\tilde y)=\alpha^{\degg g-1} \left(\tilde y+q(x)\right), \ \quad \text{and} \quad \alpha^{\degg g} g(x)=g(\alpha x+\beta),$$
as in Theorem \ref{T:autosA_hprod}.  Then $\omega \in \aut(\A_g)$ extends to an automorphism of $\A_f$ if and only if $\omega (f)=\alpha^{\degg f} f$ and $q$ is divisible by $r$. 
\item[{\rm (ii)}] Suppose that $\psi \in {\rm Aut}_\FF (\A_f)$.  Then $\psi$ restricts to an automorphism of $\A_g$ if and only if $\psi(g)=\lambda g$ for some $\lambda \in\FF^{*}$.
\end{itemize}  \end{thm}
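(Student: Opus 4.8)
The plan is to prove both directions of each equivalence by tracking how the embedding $\A_g = \langle x, \tilde y, 1\rangle \subseteq \A_f = \langle x, y, 1\rangle$ (with $\tilde y = yr$) interacts with the normal form for automorphisms given in Theorem~\ref{T:autosA_hprod}. Throughout I would use repeatedly that $\A_g$ is spanned over $\DD = \FF[x]$ by the powers of $\tilde y$, that the units of both algebras are $\FF^*$, and the description of $\tilde y$ inside $\A_f$.

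For part (i), first assume $\omega \in \aut(\A_g)$ extends to some $\Omega \in \aut(\A_f)$. Since $\Omega|_{\DD} = \omega|_{\DD}$, we get $\Omega(x) = \alpha x + \beta$, and applying Theorem~\ref{T:isos:A_h}(ii) to $\Omega$ forces $\alpha^{\degg f} f(x) = f(\alpha x + \beta) = \omega(f)$; this is the first condition. For the divisibility condition, compute $\Omega(\tilde y) = \Omega(yr) = \Omega(y)\,r(\alpha x + \beta)$; on the other hand $\Omega(\tilde y) = \omega(\tilde y) = \alpha^{\degg g - 1}(\tilde y + q) = \alpha^{\degg g - 1}(yr + q)$. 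Equating and solving for $\Omega(y)$ shows $\Omega(y) = \alpha^{\degg g - 1}\bigl(yr + q\bigr)\, r(\alpha x+\beta)^{-1}$ inside the Weyl field of $\A_f$; but $\Omega(y)$ must lie in $\A_f$, and comparing the $y$-degree-zero parts (or clearing denominators and using that $\A_f$ is a free $\DD$-module on powers of $y$) forces $r(\alpha x + \beta)$ to divide $q$, hence $r$ divides $q$ after the substitution is reversed — I would need to be a little careful here, since the relevant divisibility is by $r(\alpha x+\beta)$, but since $\alpha^{\degg f} f = f(\alpha x+\beta)$ and $g = rf$, one checks $r(\alpha x+\beta)$ and $r(x)$ are associates up to a scalar, so $r \mid q$ is the clean statement. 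Conversely, given the two conditions, define $\Omega(x) = \alpha x + \beta$ and $\Omega(y) = \alpha^{\degg f - 1}(y + p)$ where $q = rp$ (so $p \in \DD$); one verifies $[\Omega(y), \Omega(x)] = \alpha^{\degg f - 1}\alpha\,[y,x]\cdot(\text{stuff}) = \omega(f) = \alpha^{\degg f} f$ matches $f(\alpha x + \beta)$, using $\degg g = \degg r + \degg f$ to reconcile the exponents $\alpha^{\degg g - 1}$ and $\alpha^{\degg f - 1}$; then $\Omega$ is an endomorphism of $\A_f$, it is invertible by Theorem~\ref{T:autosA_hprod} (its parameters lie in $\PP_f$), and a direct check shows $\Omega(\tilde y) = \Omega(yr) = \alpha^{\degg f - 1}(y+p)\,r(\alpha x + \beta) = \alpha^{\degg g - 1}(\tilde y + q) = \omega(\tilde y)$, so $\Omega$ extends $\omega$.

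For part (ii), the forward direction is easy: if $\psi \in \aut(\A_f)$ restricts to an automorphism of $\A_g$, then in particular $\psi(\A_g) = \A_g$, and since $g\A_g = \mathsf B_g$ is the unique minimal ideal with commutative quotient (as in the proof of Lemma~\ref{lem:isomRtoR}), $\psi$ must carry it to itself, forcing $\psi(g) = \lambda g$ for some $\lambda \in \FF^*$. Conversely, suppose $\psi(g) = \lambda g$. Write $\psi = \phi_s \circ \tau_{\alpha,\beta}$ in the $\A_f$ normal form, so $\psi(x) = \alpha x + \beta$ and $\psi(y) = \alpha^{\degg f - 1}(y + s)$. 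The condition $\psi(g) = \lambda g$ together with $g(\alpha x + \beta) = \psi(g) = \lambda g(x)$ and comparing leading coefficients gives $\lambda = \alpha^{\degg g}$; then, since $g = rf$ and (via $\degg$) $\alpha^{\degg f} f(x)$ relates to $f(\alpha x + \beta)$, one deduces $f(\alpha x + \beta) = \alpha^{\degg f} f(x)$ and hence $r(\alpha x + \beta) = \alpha^{\degg r} r(x)$. Now compute $\psi(\tilde y) = \psi(yr) = \alpha^{\degg f - 1}(y + s)\,r(\alpha x + \beta) = \alpha^{\degg f - 1 + \degg r}\bigl(yr + sr\bigr) = \alpha^{\degg g - 1}\bigl(\tilde y + sr\bigr)$, which lies in $\A_g$ since $sr \in \DD \subseteq \A_g$. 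Thus $\psi$ maps the generators $x, \tilde y$ of $\A_g$ into $\A_g$, hence $\psi(\A_g) \subseteq \A_g$; applying the same argument to $\psi^{-1}$ (which also satisfies $\psi^{-1}(g) = \lambda^{-1} g$) gives $\psi^{-1}(\A_g) \subseteq \A_g$, so $\psi$ restricts to an automorphism of $\A_g$.

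The main obstacle I anticipate is the bookkeeping with the scalar powers of $\alpha$: the exponent $\degg g - 1$ appearing in the $\A_g$ normal form versus $\degg f - 1$ in the $\A_f$ normal form must be reconciled through $\degg g = \degg r + \degg f$, and one must simultaneously keep track of whether divisibility is by $r(x)$ or by $r(\alpha x + \beta)$ — these are associates only after one has established the compatibility $f(\alpha x + \beta) = \alpha^{\degg f} f(x)$, so the logical order of deductions matters. Once the degree and divisibility conditions are pinned down, everything else is a routine verification that the candidate maps are mutually inverse endomorphisms, which is guaranteed abstractly by Theorem~\ref{T:autosA_hprod} applied to $\A_f$.
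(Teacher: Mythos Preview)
Your approach is essentially the paper's, and part (i) and the forward direction of (ii) are fine (your use of the characteristic ideal $\mathsf{B}_g$ in (ii) is a nice variant of the paper's direct appeal to Theorem~\ref{T:isos:A_h}). There is, however, a genuine gap in the converse of (ii).

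The theorem allows $\degg f = 0$, i.e.\ $f\in\FF^*$, in which case $\A_f$ is the Weyl algebra $\A_1$. You begin the converse of (ii) by writing $\psi = \phi_s \circ \tau_{\alpha,\beta}$ ``in the $\A_f$ normal form'', but Theorem~\ref{T:autosA_hprod} requires $\degg f \ge 1$; when $\A_f=\A_1$ the automorphism group is much larger (e.g.\ $x\mapsto -y$, $y\mapsto x$), and no such decomposition exists in general. So you cannot assume $\psi(x)=\alpha x+\beta$ at the outset. The paper avoids this by \emph{deriving} the linear form of $\psi(x)$ from the hypothesis: since $g\in\FF[x]$ one has $g(\psi(x))=\psi(g)=\lambda g(x)$, and because $\degg g\ge 1$ a degree comparison in the free $\DD$-module $\A_f=\bigoplus_i \DD y^i$ forces $\psi(x)\in\DD$ and then $\psi(x)=\alpha x+\beta$. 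Once that is known (and hence $\psi^{-1}(x)\in\DD$ as well), a second degree argument gives $\psi(y)=\mu y+q$ with $\mu\in\FF^*$, and the defining relation of $\A_f$ yields $\mu=\alpha^{\degg f-1}$; from there your computation of $\psi(\tilde y)$ goes through. The same minor issue affects your appeals to Theorem~\ref{T:isos:A_h}(ii) and Theorem~\ref{T:autosA_hprod} in part (i), but there the $\degg f=0$ case is harmless (the conclusion $f(\alpha x+\beta)=\alpha^{\degg f}f$ is vacuous, and the candidate extension is visibly invertible); in (ii) the gap is real because you need the structure of an \emph{arbitrary} $\psi\in\aut(\A_f)$.

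Your surjectivity argument at the end of (ii)---applying the same reasoning to $\psi^{-1}$, which also satisfies $\psi^{-1}(g)=\lambda^{-1}g$---is cleaner than the paper's explicit construction of a preimage of $\tilde y$, and works once the gap above is closed.
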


\begin{proof}
(i)  Suppose that $\omega  = \phi_q \circ \tau_{\alpha,\beta}  \in \aut (\A_g)$  extends to an automorphism of $\A_f$.  Then since $\omega$ restricted to $\FF[x]$
is $\tau_{\alpha,\beta}$, it must be that $f( \alpha x + \beta) = \omega(f(x)) = \alpha^{\degg f} f(x)$
(compare Theorem \ref{T:isos:A_h}).   Applying  $\omega$ to the equation $g = rf$, we have 
$$\alpha^{\degg g} g = \omega (g) = \omega(rf) = \omega (r) \omega (f) = \omega (r) \alpha^{\degg f} f,$$
and therefore $\omega(r) = \alpha^{\degg g - \degg f} r$.     Moreover, 
\begin{equation}\label{eqn:phiExtends-yr}
\alpha^{\degg g- 1} (yr + q) = \omega (yr) = \omega (y) \omega (r) = \omega(y)(\alpha^{\degg g-\degg f}r).\end{equation}  Hence, $\omega(y) = \alpha^{\degg f - 1} y + s$ for some $s \in \DD$ and
$q = \alpha^{1 - \degg f} rs$,
so $r$ divides $q$.

Conversely, suppose that $\omega = \phi_q  \circ \tau_{\alpha,\beta} \in \aut(\A_g)$,  $\omega (f)=\alpha^{\degg f} f$, and $q$ is divisible by $r$. Write $q= rs$ for some $s \in \DD$.   Since $f(\alpha x+\beta) =\omega(f) = \alpha^{\degg f} f(x)$ and  $\omega(g) =g(\alpha x+\beta) = \alpha^{\degg g}g(x)$, it follows that $r(\alpha x+\beta) = \alpha^{\degg g-\degg f} r(x)$.  We claim
that $\omega$ agrees with the restriction of the  automorphism $\varphi = \phi_{s} \circ \tau_{\alpha,\beta} \in \aut(\A_f)$  to the subalgebra $\A_g$.
Indeed,  $\varphi(y) = \alpha^{\degg f-1} (y + s)$,  and  $\varphi(\tilde y) = \varphi(y)\varphi(r) = \alpha^{\degg f-1} (y + s)(\alpha^{\degg g-\degg f}r)= \alpha^{\degg g-1}(\tilde y + rs)=  \alpha^{\degg g-1}(\tilde y + q)= \omega(\tilde y)$.  Therefore,  $\varphi$ and $\omega$ agree on the generators $x,\tilde y$ of $\A_g$, and $\omega$ extends to the automorphism $\varphi$ of $\A_f$.

For (ii), assume $\psi \in \aut(\A_f)$.    If   $\psi$ restricts to an automorphism of $\A_g$, 
then by Theorem \ref{T:isos:A_h}, there is  $\alpha  \in \FF^*$ so that $\psi (g)=\alpha^{\degg g}g$.
Conversely, suppose that $\psi$ satisfies $\psi (g)=\lambda g$ for some $\lambda \in\FF^{*}$.  As $\degg g\geq 1$, it follows from $g(\psi (x))=\lambda g(x)$ that there are $\alpha \in \FF^*, \beta \in \FF$ with $\psi (x) = \alpha x + \beta \in \A_g$, and therefore $\psi^{-1}(x) = \alpha^{-1}(x - \beta) \in \A_g$.  Then it is easy to conclude that there exist $\mu\in\FF^*$ and $q \in \FF[x]$ so that $\psi (y)=\mu y+q$. If we apply $\psi$ to the defining relation of $\A_{f}$, we further deduce that $f(\alpha x+\beta)=\alpha\mu f(x)$, so in fact $\mu=\alpha^{\degg f - 1}$ and $f(\alpha x+\beta)=\alpha^{\degg f} f(x)$. Then 
$\lambda g(x) = g(\alpha x+\beta)$ implies that $\lambda=\alpha^{\degg g}$.   From this we deduce that $\psi (r(x))=r(\alpha x + \beta) = \alpha^{\degg g-\degg f} r(x)=\alpha^{\degg r} r(x)$.  It remains to prove that $\psi (\tilde y)\in \A_{g}$ and $\psi (\A_g) \supseteq \A_g$.  Observe that 
$$
\psi (\tilde y)=\psi(y)\psi(r)
= (\alpha^{\degg f-1} y+q)(\alpha^{\degg g-\degg f} r)  
=\alpha^{\degg g-1} \tilde y+\alpha^{\degg r}rq \in\A_{g}.
$$
Now if we let $s \in \FF[x]$ such that $s( \alpha x + \beta) = \alpha^{\degg r} rq$, it is straightforward to see that $\psi ( \alpha^{1-\degg g} (\tilde y - s)) = \tilde y$, and thus the image of the restriction of $\psi$ to $\A_g$ contains the generators $x$ and $\tilde y$.

\end{proof}

\begin{prop}\label{P:Hh}  For $0\neq h \in \FF[x]$, the subgroup $\mathsf{ H}_{h}=\{\omega  \in\aut(\A_1)\mid \omega(\A_h)=\A_{h}\}$  is normal  in $\aut(\A_1)$  if and only if $h\in\FF^{*}$. \end{prop}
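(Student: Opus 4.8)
If $h\in\FF^{*}$, then by Corollary~\ref{cor:embed} the inclusion $\A_h\subseteq\A_1$ is an equality, so $\mathsf{H}_h=\aut(\A_1)$, which is normal in itself; this settles one implication. For the converse, assume $\degg h = n\ge 1$, and the plan is to produce a single $\omega\in\mathsf{H}_h$ together with a single $\psi\in\aut(\A_1)$ for which the conjugate $\psi\circ\omega\circ\psi^{-1}$ does not preserve $\A_h$; this already shows $\mathsf{H}_h$ is not closed under conjugation, hence not normal. The guiding observation is that any $\omega\in\mathsf{H}_h$ restricts to an automorphism of $\A_h$, so Lemma~\ref{lem:isomRtoR}, applied with $g=h$ to $\omega|_{\A_h}$, forces $\omega(h)=\lambda h$ for some $\lambda\in\FF^{*}$; in particular $\omega(h)\in\FF[x]$ for every $\omega\in\mathsf{H}_h$. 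Thus it suffices to arrange $(\psi\circ\omega\circ\psi^{-1})(h)\notin\FF[x]$.

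Concretely, I would take $\omega=\phi_x$, the automorphism of $\A_1$ with $\phi_x(x)=x$ and $\phi_x(y)=y+x$ (see Section~\ref{sec:automWeyl}); since $\phi_x$ fixes $\FF[x]$ pointwise, $\phi_x(h)=h$, so $\phi_x\in\mathsf{H}_h$. For $\psi$ I would take the automorphism $\tau$ of $\A_1$ from Section~\ref{sec:automWeyl}, with $\tau(x)=-y$, $\tau(y)=x$ and $\tau^{-1}(x)=y$, $\tau^{-1}(y)=-x$. A short computation gives $\rho:=\tau\circ\phi_x\circ\tau^{-1}$ with
$$\rho(x)=\tau\big(\phi_x(y)\big)=\tau(y+x)=x-y,\qquad \rho(y)=\tau\big(\phi_x(-x)\big)=\tau(-x)=y,$$
and $[\rho(y),\rho(x)]=[y,x-y]=1$ confirms $\rho\in\aut(\A_1)$. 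Writing $h=\sum_{k=0}^{n}a_kx^k$ with $a_n\neq 0$, we then have $\rho(h)=\sum_{k=0}^{n}a_k(x-y)^k$, and everything comes down to showing this element is not in $\FF[x]$.

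That last point is the only step that is not purely formal, and I expect it to be the main obstacle: one must rule out that the noncommutative reordering forced inside $h(x-y)$ cancels all the occurrences of $y$. I would settle it with the $\ZZ$-grading on $\A_1$ in which $\deg x=1$ and $\deg y=-1$; since the relation $yx=xy+1$ is homogeneous of degree $0$, this is a genuine algebra grading $\A_1=\bigoplus_{d\in\ZZ}(\A_1)_d$, and $\FF[x]$ lies in the part of nonnegative degree. For $k\le n$ every homogeneous component of $(x-y)^k$ has degree $\ge -k\ge -n$, and within the expansion of $(x-y)^n$ the only term that is homogeneous of degree exactly $-n$ is $(-y)^n=(-1)^ny^n$; hence the degree $-n$ component of $\rho(h)$ equals $(-1)^na_ny^n\neq 0$. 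As $n\ge 1$, that component vanishes for every element of $\FF[x]$, so $\rho(h)\notin\FF[x]$ and therefore $\rho\notin\mathsf{H}_h$. Since $\phi_x\in\mathsf{H}_h$, it follows that $\tau\,\mathsf{H}_h\,\tau^{-1}\neq\mathsf{H}_h$, so $\mathsf{H}_h$ is not normal in $\aut(\A_1)$, completing the proof. Everything outside the grading argument is routine bookkeeping.
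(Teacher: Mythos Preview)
Your proof is correct and uses the very same conjugation the paper does: $\omega=\phi_x\in\mathsf{H}_h$ and $\psi=\tau$, giving $\rho(x)=x-y$. The only difference is in the final verification that $\rho\notin\mathsf{H}_h$. You invoke Lemma~\ref{lem:isomRtoR} to reduce to showing $\rho(h)\notin\FF[x]$, and then run a $\ZZ$-grading argument on $h(x-y)$ to extract the nonzero component $(-1)^n a_n y^n$. The paper dispenses with this step entirely: having already computed $\rho(x)=x-y$, it simply cites Theorem~\ref{T:isos:A_h}\,(ii) (automorphisms of $\A_h$ with $\degg h\ge 1$ send $x$ to $\alpha x+\beta$) to conclude $\rho$ cannot restrict to an automorphism of $\A_h$. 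So your route is sound but does more work than necessary; once you have $\rho(x)=x-y\notin\FF[x]$, you are already done. One small point: your sentence ``$\phi_x(h)=h$, so $\phi_x\in\mathsf{H}_h$'' is not quite a justification---fixing $h$ alone does not give $\phi_x(\A_h)=\A_h$; you need the (easy) observation $\phi_x(\hat y)=(y+x)h=\hat y+xh\in\A_h$, and likewise for $\phi_{-x}$.
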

 \begin{proof} That $\mathsf{H}_h$ is a subgroup is clear. Suppose $\omega \in\mathsf H_h$ is defined by $\omega (x)=x$ and $\omega(y)=y+x$.  Recall the automorphism $\tau\in \aut(\A_1)$ defined by $\tau (x)=-y$ and $\tau (y)=x$,  and observe that  $\tau\notin\mathsf H_h$.  Then 
$$
(\tau \circ \omega \circ \tau^{-1})(x)=\tau(y+x)=x-y.
$$
If $\mathsf H_h$ is normal in $\aut(\A_1)$,  then $\tau \circ \omega \circ \tau^{-1}$ restricts to an automorphism of $\A_{h}$, which is impossible unless $h \in \FF^*$,  since automorphisms of $\A_h$  must map $\FF[x]$ to itself when $h \notin \FF^*$. The converse is clear, as $\mathsf H_h=\aut(\A_h)$ if $h\in\FF^{*}$.
\end{proof}

\begin{section}{Relationship of the Algebras $\A_h$  to \\ Generalized Weyl Algebras} \end{section}

Given a ring $\mathsf{D}$, an automorphism $\sigma$ of $\mathsf{D}$, and a central element $a\in \mathsf{D}$, the {\it generalized Weyl algebra} $\mathsf{D}(\sigma, a)$ is the ring extension of $\mathsf{D}$ generated by $u$ and $d$, subject to the relations:
\begin{equation}\label{E:noeth:gwa1}
ub=\sigma (b)u, \quad\quad bd=d\sigma (b),  \quad\quad \text{for all $b\in \mathsf{D}$;}
\end{equation}
\begin{equation}\label{E:noeth:gwa2}
du=a, \quad\quad ud=\sigma (a).
\end{equation}
Generalized Weyl algebras were introduced by Bavula~\cite{bavula:gwar93}, who showed that  if $\mathsf{D}$ is a Noetherian $\FF$-algebra which is a domain, the automorphism $\sigma$ is $\FF$-linear,  and $a\neq 0$,  then $\mathsf{D}(\sigma, a)$ is a Noetherian domain.

\medskip
\begin{lemma}\label{L:OreGWA}{\rm [cf.~Lemma~\ref{lem:poly}]}
The following are generalized Weyl algebras over a polynomial ring $\mathsf{D}=\FF[t]$:
\begin{itemize}
\item[{\rm (i)}]   a  quantum plane
\item[{\rm (ii)}]   a quantum Weyl algebra
\item[{\rm (iii)}]   the polynomial algebra in two variables
\item[{\rm (iv)}]   the Weyl algebra.
\end{itemize}
\end{lemma}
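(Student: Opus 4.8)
The plan is to realize each of the four algebras explicitly as a generalized Weyl algebra $\mathsf{D}(\sigma,a)$ over $\mathsf{D} = \FF[t]$ by exhibiting the automorphism $\sigma$ and the central element $a$, and then checking that the defining relations \eqref{E:noeth:gwa1}--\eqref{E:noeth:gwa2} reduce to the standard presentation of the algebra in question. Since $\mathsf{D} = \FF[t]$ is commutative, every element is central, so the only real data are $\sigma$ (an $\FF$-algebra automorphism of $\FF[t]$, hence $t \mapsto \alpha t + \beta$ for some $\alpha \in \FF^*$, $\beta \in \FF$) and the polynomial $a = a(t)$.

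First I would handle the two commutative-parameter cases together. For the quantum plane $\FF_q[u,d]$ with $du = q\, ud$ (for $q \in \FF^*$), take $\sigma(t) = q^{-1} t$ and $a = t$: then $du = a = t$ and $ud = \sigma(a) = q^{-1} t = q^{-1}(du)$, giving $du = q\, ud$, while the relations $ub = \sigma(b)u$, $bd = d\sigma(b)$ for $b \in \FF[t]$ are automatically consistent with $t$ being identified with $du$. For the quantum Weyl algebra $\A_1^q$ with relation $du - q\,ud = 1$, take $\sigma(t) = q^{-1}(t-1)$ (or the standard normalization used for quantum Weyl algebras as GWAs) and $a = t$, so that $du - q\,ud = a - q\,\sigma(a) = t - q\cdot q^{-1}(t-1) = 1$. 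I would state the exact scalars matching whatever convention \cite{bavula:gwar93} uses, since Lemma~\ref{lem:poly} already invokes that these are GWAs; the point here is just to record the presentations.

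Next I would do the polynomial algebra $\FF[x,y]$ and the Weyl algebra $\A_1$. For $\FF[x,y]$, take $\sigma = \mathsf{id}_{\FF[t]}$ and $a = t$: the relations become $ub = bu$, $bd = db$, $du = t = ud$, so $\mathsf{D}(\mathsf{id},t)$ is the commutative algebra $\FF[t,u,d]/(du - t) \cong \FF[u,d]$, the polynomial algebra in two variables (with $x = u$, $y = d$). For the Weyl algebra $\A_1$ with $[d,u] = du - ud = 1$, take $\sigma(t) = t - 1$ and $a = t$: then $du = a = t$ and $ud = \sigma(a) = t-1$, so $du - ud = 1$; the relations $ub = \sigma(b)u$ for $b = b(t)$ then read $u\,b(t) = b(t-1)u$, which is exactly the intertwining relation $u\,b(du) = b(du - 1)u$ valid in $\A_1$ under $t \leftrightarrow du$, and similarly for $bd = d\sigma(b)$. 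One should check the identification is an isomorphism and not just a surjection — this follows because the GWA $\mathsf{D}(\sigma,a)$ has the PBW-type basis $\{t^i u^j, t^i d^j\}$ and $\A_1$ has the basis $\{u^i d^j\}$, and the map is visibly compatible with these (or invoke Bavula's result that the GWA is a Noetherian domain of the right GK dimension).

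The main obstacle is essentially bookkeeping rather than depth: getting the scalar normalizations in the quantum plane and quantum Weyl algebra cases to match the precise conventions of \cite{bavula:gwar93} (there is some freedom in rescaling $u$, $d$, and $t$), and making sure in each case that the constructed homomorphism from the abstract presentation onto $\mathsf{D}(\sigma,a)$ is injective. The cleanest way to dispose of injectivity uniformly is to note that both sides are $\FF[t]$-free (respectively $\FF[x]$- or $\FF[y]$-free) with matching bases, so a degree/rank count forces the surjection to be an isomorphism; alternatively, since these algebras are domains of GK dimension $2$ and the GWAs in question are as well by Bavula's theorem, a nonzero algebra map between them that is onto is automatically injective. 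I would present the four cases as a short enumerated verification, stating $\sigma$ and $a$ in each and leaving the relation-checking as the one-line computations above.
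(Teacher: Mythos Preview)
Your proposal is correct. The four realizations you give (with the appropriate $\sigma$ and $a=t$ in each case) are the standard ones, and your remarks about checking that the surjection is an isomorphism via a basis/PBW argument are sound.

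The difference from the paper is one of style rather than substance: the paper's proof simply cites \cite{BvO97} for cases (i), (ii), and (iv), and handles only (iii) directly (with exactly your choice $\sigma=\mathsf{id}$, $a=t$). Your approach is more self-contained---you exhibit $\sigma$ and $a$ explicitly in all four cases and verify the defining relations---whereas the paper defers three of the four cases to the literature. Your version is more informative for a reader who does not have \cite{BvO97} at hand; the paper's is shorter. Either is perfectly adequate for a lemma whose role is just to set up the contrast with Theorem~\ref{thm:A_h-not-GWApoly}.
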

\begin{proof}
Cases (i), (ii), and (iv) follow  from Examples 2, 4, and 1, respectively of \cite{BvO97}.   The remaining case can be seen by letting $\sigma$ be the identity automorphism of $\mathsf{D}$ and $a=t$, so that $\mathsf{D}(\sigma, a)\cong\FF[d, u]$.  
\end{proof}

In view of Lemma~\ref{lem:poly} and the preceding result, it is natural to inquire whether the algebras $\A_{h}$, for $h\notin\FF$, are generalized Weyl algebras. Theorem \ref{thm:A_h-not-GWApoly} gives an answer to this question (in the negative) when $\mathsf{D}$ is a polynomial ring in one variable.
 
\begin{lemma}\label{lem:princIdealGWA}
Assume $\mathsf D$ is a domain with $0\neq a \in \mathsf D$ $central$,  and let $\sigma : \mathsf D \to \mathsf D$ be an automorphism of $\mathsf D$.  If $a \not\in \mathsf D^\times$, then the only principal ideal of the generalized Weyl algebra $\mathsf D(\sigma, a)$ containing both $u$ and $d$ is $\mathsf D(\sigma, a)$.
\end{lemma}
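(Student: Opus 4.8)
The plan is to suppose, for contradiction, that there is an element $w \in \mathsf{D}(\sigma,a)$ with $u, d \in w\,\mathsf{D}(\sigma,a)$, and to show $w$ must be a unit, forcing $w\,\mathsf{D}(\sigma,a) = \mathsf{D}(\sigma,a)$. The key structural fact I would use is the $\mathbb{Z}$-grading on $G := \mathsf{D}(\sigma,a)$: by the defining relations \eqref{E:noeth:gwa1}--\eqref{E:noeth:gwa2}, $G = \bigoplus_{n \in \mathbb{Z}} G_n$ where $G_n = \mathsf{D}\,u^n$ for $n \ge 0$, $G_n = d^{|n|}\mathsf{D}$ for $n < 0$, and each $G_n$ is free of rank one as a left $\mathsf{D}$-module (this is standard for generalized Weyl algebras and follows from Bavula's construction, analogous to Theorem~\ref{thm:basicFactsOre}(1) for Ore extensions). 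Write $w = \sum_n w_n$ and $u = w a_u$, $d = w a_d$ for $a_u, a_d \in G$; I would first argue $w$ can be taken homogeneous.

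First I would pin down the degree. Since $u \in G_1$ and $d \in G_{-1}$ are homogeneous and $G$ is a domain (Bavula's theorem, using that $\mathsf{D}$ is a domain, $\sigma$ is an automorphism, $a \neq 0$), a homogeneity argument on the top and bottom graded components shows that a generator $w$ of an ideal containing both $u$ and $d$ must have its nonzero graded components confined in a way that makes $w$ homogeneous of degree $0$, i.e. $w \in \mathsf{D} = G_0$: indeed if $w = \sum_{i \le n} w_i$ with $w_n \neq 0$ and $u = w a_u$ with $a_u = \sum_{j} (a_u)_j$ having top component in degree $m$, then the top component of $w a_u$ lies in degree $n+m$ and must equal $1 \in G_1$ coming from $u$, forcing $n + m = 1$; comparing bottom components against $u$ (whose only nonzero component is in degree $1$) forces all lower components to cancel, and running the same argument with $d$ in degree $-1$ squeezes $w$ into a single degree, which must then be $0$. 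So $w \in \mathsf{D}$, $w \neq 0$, and $u = w\,p$, $d = w\,q$ for some $p, q \in G$.

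Next, extract the concrete divisibility conditions. From $u = w p$: writing $p = \sum_j p_j u^j + \cdots$ and using $u \in G_1 = \mathsf{D}u$, the degree-$1$ component gives $1 = w \sigma^{?}(c)$ for the leading coefficient $c \in \mathsf{D}$ of $p$ in degree $1$ (the precise twist by a power of $\sigma$ coming from $w u^j = \sigma^j(w) u^j$ when we commute $w$ past $u$, but since $w$ is being multiplied on the left this is just $w \cdot (\text{coeff})$, a product in $\mathsf{D}$). Hence $w$ is left-invertible in $\mathsf{D}$, and as $\mathsf{D}$ is a commutative domain — wait, I should not assume commutativity; but even for $\mathsf D$ merely a domain, $w c = 1$ in the domain $\mathsf D$ forces $w \in \mathsf{D}^\times$ once we also get $c w = 1$, which follows symmetrically from $d = wq$ by looking at the degree-$(-1)$ component, where $d = d^{1}\mathsf D$ gives $1 = (\text{coeff}) \cdot w$ after moving $w$ appropriately. (Cleanly: $G_0 = \mathsf D$ is a subring, $u$ generates $G_1$ as both a left and right $\mathsf D$-module, so $u = w p$ with the degree-$1$ part of $p$ of the form $u c'$ yields $u = w u c' = \sigma(w) u c'$, hence $\sigma(w) c' = 1$; combined with the analogous relation from $d$, $w \in \mathsf D^\times$.) Then $w G = G$, contradicting properness, so no proper principal ideal contains both $u$ and $d$ — which is exactly the claim, modulo noting the hypothesis $a \notin \mathsf{D}^\times$ is what prevents $u, d$ themselves from being units (if $a$ were a unit, $G$ would be strongly graded / a crossed product and $u, d$ invertible, making the statement false).

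The main obstacle I anticipate is the bookkeeping of $\sigma$-twists when commuting $w \in \mathsf{D}$ past powers of $u$ and $d$, and making the "squeeze $w$ into a single graded component" step fully rigorous rather than hand-wavy — in particular handling the case where $p$ or $q$ has components in many degrees, so that cancellation across degrees in $w p$ could a priori conspire. The clean way around this is to project everything onto graded components from the outset: fix the decomposition $G = \bigoplus_n G_n$, note $G_n G_m \subseteq G_{n+m}$, and observe that since $u$ is homogeneous, comparing the component of $wp$ in each degree with the (single, degree-$1$) component of $u$ immediately kills all but one degree's worth of data. I expect this to go through cleanly once the grading is set up carefully; the role of $a \notin \mathsf{D}^\times$ should only surface implicitly (it guarantees the statement is nonvacuous, i.e. that $u,d$ are non-units, but is not actually needed in the contradiction derivation as structured above — though I would double-check whether the intended proof uses it more essentially, e.g. to rule out $w$ being a non-trivial divisor of $u$ that is nonetheless not in $\mathsf D$).
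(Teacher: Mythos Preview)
Your overall framework matches the paper's: exploit the $\mathbb{Z}$-grading on $G=\mathsf{D}(\sigma,a)$, show that any generator $w$ of a principal ideal containing $u$ and $d$ must be homogeneous, and then conclude $w$ is a unit. The homogeneity step is fine (in a graded domain, a factorization of a homogeneous element has homogeneous factors), and once $w\in\mathsf{D}$ the unit conclusion is easy, as you say.

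The genuine gap is the sentence ``squeezes $w$ into a single degree, which must then be $0$.'' Homogeneity alone does \emph{not} force $\deg w=0$: if $w$ is homogeneous of degree $n$, then $u=wa_u$ and $d=wa_d$ merely say $a_u\in G_{1-n}$ and $a_d\in G_{-1-n}$, which exist for every $n$. Ruling out $n\neq 0$ is precisely where the hypothesis $a\notin\mathsf{D}^\times$ enters, and this is the heart of the paper's argument. Concretely, if $n<0$ write $w=cd^{-n}$ and $a_u=\tilde c\,u^{1-n}$; then
\[
u \;=\; c\,d^{-n}\,\tilde c\,u^{1-n} \;=\; c\,\sigma^{n}(\tilde c)\,d^{-n}u^{-n}\,u,
\]
so $c\,\sigma^{n}(\tilde c)\,d^{-n}u^{-n}=1$ in $\mathsf{D}$, which forces $du=a$ to be a unit --- contradiction. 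The case $n>0$ is handled symmetrically using $d$. Your closing remark that $a\notin\mathsf{D}^\times$ ``is not actually needed in the contradiction derivation'' is therefore exactly backwards: without it, $w=u$ (degree $1$) would be a perfectly good homogeneous generator when $a$ is a unit, since then $d=u\cdot u^{-1}d\cdot u^{-1}d\in uG$. You flagged this as something to ``double-check''; it is in fact the missing step.
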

\begin{proof}
Consider the natural $\mathbb{Z}$-grading on $\mathsf{D}(\sigma, a)$ where the elements of $\mathsf{D}$ have degree $0$, $d$ has degree $-1$ and $u$ has degree $1$.

Assume $v \mathsf{D}(\sigma, a)$ is a principal ideal of $\mathsf{D}(\sigma, a)$ generated by $v$ and containing $u$. Then, the equation $v b=u$, for $b\in \mathsf{D}(\sigma, a)$, implies that both $v$ and $b$ must be homogeneous with respect to the $\mathbb{Z}$-grading. Assume $v$ has degree $n<0$. Then we can write $v=cd^{-n}$ and $b=\tilde c u^{1-n}$, for some $c, \tilde c \in \mathsf D$. We have:
\begin{equation*}
u=(cd^{-n})(\tilde cu^{1-n})=(c\sigma^{n}(\tilde c)d^{-n}u^{-n})u.
\end{equation*}
The above equation implies that $du=a$ is a unit in $\mathsf D$, which is a contradiction. Hence, $v$ has degree $n\geq 0$. Similarly, assuming that $d \in v \mathsf{D}(\sigma, a)$, we conclude that $v$ has degree $n\leq 0$. It follows that if $v \mathsf{D}(\sigma, a)$ contains both $u$ and $d$, then $v \in \mathsf D$. But then the equation $v\tilde cu=u$, for $\tilde c \in \mathsf D$, implies that $v\mathsf{D}(\sigma, a)=\mathsf{D}(\sigma, a)$.
\end{proof}

\begin{thm}\label{thm:A_h-not-GWApoly}
Assume  $h \not \in\FF$. Then the algebra $\A_h$ is not a generalized Weyl algebra over a polynomial ring in one variable.
\end{thm}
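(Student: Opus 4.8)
The plan is to argue by contradiction, supposing that $\A_h \cong \mathsf{D}(\sigma, a)$ where $\mathsf{D} = \FF[t]$, $\sigma$ is an $\FF$-linear automorphism of $\mathsf{D}$, and $0 \neq a \in \mathsf{D}$. The first order of business is to locate $\mathsf{D}$ inside $\A_h$: since $\mathsf{D}$ must be a commutative polynomial subalgebra over which $\A_h$ is a suitable extension, the natural $\ZZ$-grading on $\mathsf{D}(\sigma,a)$ transports to a $\ZZ$-grading on $\A_h$, and $\mathsf{D}$ sits in degree $0$. I would use the structure of $\A_h = \bigoplus_{i \geq 0} \DD \hat y^i$ together with Lemma~\ref{lem:isomRtoR} and Theorem~\ref{T:isos:A_h}: any grading of $\A_h$ by $\ZZ$ must be compatible with the filtration by degree in $\hat y$, and one shows the degree-zero component is forced to be a polynomial ring in one variable $t$ with $\FF[t]$ containing (or contained in) $\FF[x]$ in a controlled way. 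In fact, because the ideal $h\A_h$ is the smallest ideal with commutative quotient (Lemma~\ref{lem:isomRtoR}) and $u,d$ generate $\mathsf{D}(\sigma,a)$ over $\mathsf{D}$ with $[d,u] = \sigma(a) - a \in \mathsf{D}$, the commutator ideal of $\mathsf{D}(\sigma,a)$ is generated by $\sigma(a)-a$ together with the $\sigma(b)-b$; matching this with $h\A_h$ pins down what $a$ and $\sigma$ can be.

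The key computational step is then to exploit Lemma~\ref{lem:princIdealGWA}. If $a \notin \mathsf{D}^\times = \FF^*$, that lemma says the only principal ideal of $\mathsf{D}(\sigma,a)$ containing both $u$ and $d$ is the whole algebra. But in $\A_h$ with $h \notin \FF$, the element $h$ is a non-unit normal element (Lemma~\ref{lem:h-factorsNormal}), and one can check that under any realization $\A_h \cong \mathsf{D}(\sigma,a)$ the images of $u$ and $d$ both lie in the proper principal (two-sided) ideal $h\A_h$ — indeed $du = a$ and $ud = \sigma(a)$ both lie in the commutator-related structure, and more directly the defining relations give $du - ud = \sigma(a) - a$, which must be (a unit times) $h$ or $0$; combined with the grading, one sees $u, d \in h\A_h$ is impossible only if $a \in \FF^*$. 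So the realization forces $a \in \FF^*$, i.e. $a$ is a unit. When $a \in \mathsf{D}^\times$, the relations $du = a$, $ud = \sigma(a)$ show $u,d$ are units in $\mathsf{D}(\sigma,a)$, whence $\mathsf{D}(\sigma,a) = \mathsf{D}[u,u^{-1};\sigma]$ is a skew Laurent extension. But then its units include $\FF^* \cdot u^{\ZZ}$, which is infinite modulo $\FF^*$, contradicting Theorem~\ref{thm:basicFactsOre}(4): the units of $\A_h$ are just $\FF^*$. That contradiction completes the proof.

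**The main obstacle** I expect is the bookkeeping in the first paragraph: showing rigorously that a hypothetical isomorphism $\A_h \cong \mathsf{D}(\sigma,a)$ must identify $\mathsf{D} = \FF[t]$ with a subalgebra of $\A_h$ on which the induced $\ZZ$-grading is the "obvious" one, so that Lemma~\ref{lem:princIdealGWA} genuinely applies to the images of $u$ and $d$ inside $\A_h$. The cleanest route is probably to avoid pinning down $\mathsf{D}$ precisely and instead argue purely ideal-theoretically: in $\mathsf{D}(\sigma,a)$ with $a$ a non-unit, Lemma~\ref{lem:princIdealGWA} plus the fact that $u\mathsf{D}(\sigma,a) + d\mathsf{D}(\sigma,a)$ is a proper ideal shows $\mathsf{D}(\sigma,a)$ has a proper ideal $I$ with $\mathsf{D}(\sigma,a)/I$ commutative and with $I$ not principal (or not generated by a normal element of the right shape); whereas we know from the section on normal elements and prime ideals that the commutator ideal $h\A_h$ of $\A_h$ \emph{is} principal, generated by the normal element $h$. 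Reconciling these and then invoking the units argument when $a$ is a unit should close the gap. If that ideal-theoretic comparison proves delicate, the fallback is the direct grading argument sketched above, carried out with the explicit basis $\{x^j \hat y^i\}$.

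Putting it together: **First I would** set up the $\ZZ$-grading and identify, via Lemma~\ref{lem:isomRtoR} and Theorem~\ref{T:isos:A_h}, the commutator ideal of $\A_h$ as $h\A_h$; **then** split into the cases $a \in \FF^*$ and $a \notin \FF^*$; **for $a \notin \FF^*$**, apply Lemma~\ref{lem:princIdealGWA} to derive that $u,d$ generate everything, contradicting the properness of the ideal they must land in after translating through the isomorphism; **for $a \in \FF^*$**, observe $u,d$ become units, producing infinitely many units in $\A_h$ modulo $\FF^*$ and contradicting Theorem~\ref{thm:basicFactsOre}(4). Either way we reach a contradiction, so no such realization exists.
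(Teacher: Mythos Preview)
Your case split on whether $a \in \mathsf{D}^\times = \FF^*$ is right, and your treatment of the unit case (producing non-scalar units, contradicting Theorem~\ref{thm:basicFactsOre}(4)) is correct and matches the paper. The gap is in the non-unit case: to invoke Lemma~\ref{lem:princIdealGWA} you need $u$ and $d$ to lie in the commutator ideal (which, transported through the isomorphism, is the principal ideal $h\A_h$), and your justification for this does not hold. From $ub=\sigma(b)u$ one gets $[u,b]=(\sigma(b)-b)u$, so $u$ lies in the commutator ideal as soon as some $\sigma(b)-b$ is a unit in $\FF[t]$. This happens exactly when $\sigma$ is a nontrivial translation (the classical case), and \emph{fails} in the quantum case $\sigma(t)=\xi t$ with $\xi\neq 1$: every $\sigma(b)-b$ then has zero constant term, and one checks directly that the abelianization of $\mathsf{D}(\sigma,a)$ is a nonzero ring in which the images of $u$ and $d$ survive (for the quantum plane it is $\FF[u,d]/(ud)$). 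The relation $du-ud=a-\sigma(a)$ that you cite only puts $a-\sigma(a)$ in the commutator ideal, not $u$ or $d$, and your ``cleanest route'' via the ideal $u\mathsf{D}(\sigma,a)+d\mathsf{D}(\sigma,a)$ runs the containment the wrong way: that ideal has commutative quotient, so it \emph{contains} the commutator ideal, which says nothing about $u,d$ lying \emph{in} the commutator ideal.

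The paper closes this gap with an external input you do not mention: after normalizing $\sigma$ (via \cite[Prop.~2.1.1]{RS06}) to the identity, classical, or quantum type, and dismissing the identity by commutativity, it invokes Corollary~\ref{C:Weylfield} (the skew field of fractions of $\A_h$ is the Weyl field) together with \cite[Th\'e.~3.10]{AD94} to rule out the quantum type. Only once $\sigma(t)=t-1$ is established do the identities $u=[t,u]$ and $d=[d,t]$ place $u,d$ in the commutator ideal, and then Lemma~\ref{lem:princIdealGWA} finishes exactly as you intend. Incidentally, the ``bookkeeping'' you flag as the main obstacle --- locating $\FF[t]$ inside $\A_h$ via a grading --- is not needed: the whole argument runs on the GWA side, using only that the minimal ideal with commutative quotient transports to a principal ideal under any isomorphism.
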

\begin{proof}
Assume $h\neq 0$ and $\A_{h}\cong \mathsf{D}(\sigma, a)$, for $\mathsf{D}=\FF[t]$. First, notice that $a\notin\FF$, as otherwise we would have $ud=0=du$, and $\A_{h}$ would not be a domain, or else $u=d^{-1}$ and $\A_{h}$ would have nontrivial units. By~\cite[Prop.\ 2.1.1]{RS06} we need only consider three possibilities for $\sigma$:
\begin{itemize}
\item[(A)] $\sigma$ is the identity automorphism;
\item[(B)] $\sigma(t)=t-1$;
\item[(C)] $\sigma(t)=\xi t$, for some $\xi \in\FF^{*}$, with $\xi \neq 1$.
\end{itemize}

Notice that if $\sigma$ is the identity then $\mathsf{D}(\sigma, a)$ must be commutative and thus $h=0$, so case (A)  above does not occur.  Cases (B) and (C) are usually referred to as the {\it classical} and {\it quantum} cases, respectively.  

Let $\mathrm{Frac}( \A_h)$ be the skew field of fractions of $\A_{h}$. By Corollary~\ref{C:Weylfield}, $\mathrm{Frac}( \A_h)$ is the (first) Weyl field, i.e., the field of fractions of the Weyl algebra.  Thus, it follows by \cite[Prop.\ 2.1.1]{RS06} and \cite[Th\'e.\ 3.10]{AD94} that $\mathsf{D}(\sigma, a)$ must be of classical type, i.e., $\sigma(t)=t-1$.

Let the ideal $\BBh$ of $\A_{h}$ (resp.\ $\mathsf{J}$ of $\mathsf{D}(\sigma, a)$) be minimal with the property that $\A_{h}/\BBh$ (resp.\ $\mathsf{D}(\sigma, a)/\mathsf{J}$) is commutative. Then, by the defining relations of $\A_{h}$ and the fact that $h$ is normal, we have $\BBh=h\A_{h}$. In particular, $\BBh$ is a principal ideal,  and it follows that $\mathsf{J}$ is also principal.  In $\mathsf{D}(\sigma, a)$, the relations $u=[t, u]$ and $d=[d, t]$ show that $u, d\in \mathsf{J}$.  But Lemma \ref{lem:princIdealGWA} implies that $\mathsf{J} = \mathsf D(\sigma, a)$, and thus $h \A_h = \A_h$, so $h \in \FF^*$.
\end{proof}

\noindent  \textsc{Georgia Benkart} \\
\textit{\small Department of Mathematics, University of Wisconsin-Madison, Madison, WI 53706, USA}\\
\texttt{benkart@math.wisc.edu}\\ 

\noindent \textsc{Samuel A. Lopes} \\
\textit{\small CMUP, Faculdade de Ci\^encias, Universidade do Porto, 
Rua do Campo Alegre 687\\ 
4169-007 Porto, Portugal}\quad 
\texttt{slopes@fc.up.pt}\\

\noindent \textsc{Matthew Ondrus} \\
\textit{\small Mathematics Department,
Weber State University, 
Ogden, Utah, 84408 USA}\\
\texttt{mattondrus@weber.edu}

\end{document}